\theoremstyle{plain}
\newtheorem{thm}{Theorem}[section]
\newtheorem*{thm*}{Theorem}
\newtheorem{prop}[thm]{Proposition} 
\newtheorem{lem}[thm]{Lemma} 
\newtheorem{cor}[thm]{Corollary}
\newtheorem*{cor*}{Corollary}
\newtheorem{defi}[thm]{Definition}
\newcommand {\R} {\mathbb{R}} \newcommand {\Z} {\mathbb{Z}}
\newcommand {\T} {\mathbb{T}} \newcommand {\N} {\mathbb{N}}
\newcommand {\p} {\partial}
\newcommand {\dt} {\partial_t}
\newcommand {\sgn} {\text{sgn}}
\begin{document}
\title[Echo Chains as a Linear Mechanism]{Echo Chains as a Linear Mechanism:
  Norm Inflation, Modified Exponents and Asymptotics}
\author{Yu Deng}
\address{Department of Mathematics, University of Southern California, Los
  Angeles, CA 90089, USA}
\email{yudeng@usc.edu}
\author{Christian Zillinger}
\address{BCAM -- Basque  Center  for  Applied  Mathematics, Mazarredo 14, E48009
  Bilbao, Basque Country -- Spain}
\email{czillinger@bcamath.org}

\begin{abstract}
  In this article we show that the Euler equations, when linearized around a low
  frequency perturbation to Couette flow, exhibit norm inflation in Gevrey-type
  spaces as time tends to infinity. Thus, echo chains are shown to be a (secondary)
  linear instability mechanism. 
  Furthermore, we develop a more precise analysis of cancellations in the
  resonance mechanism, which yields a modified exponent in the high frequency
  regime. In addition it allows us to remove a logarithmic constraint on the
  perturbations present in prior works by Bedrossian, Deng and Masmoudi and to
  construct solutions which are initially in a Gevrey class for which the
  velocity asymptotically converges in Sobolev regularity but
  diverges in Gevrey regularity.
\end{abstract}

\maketitle

\tableofcontents
\section{Introduction}
\label{sec:intro}

In this article our aim is to develop a further understanding of the long-time
asymptotic behavior of the 2D Euler equations near Couette flow
\begin{align}
  \label{eq:Couette}
  \p_t \omega + y \p_x \omega + v \cdot \nabla \omega =0
\end{align}
in an infinite channel $\T \times \R$.
As the velocity field is incompressible all $L^p$ norms of the vorticity
$\omega$ are conserved and the equation is nonlinearly stable in this sense.
Furthermore, the linearized problem exhibits weak convergence of the vorticity
and as consequence in the linear problem $v - \langle v \rangle_x$ converges strongly as $t
\rightarrow \infty$.
This behavior is known as linear inviscid damping in analogy to the similar
phenomenon of Landau damping in plasma physics.
While the linearized problem possesses an explicit solution and exhibits linear
inviscid damping for any initial data in $H^s, s\geq 0$, the question of
stability and asymptotic behavior of more general shear flows or the nonlinear
problem have been a very active area of research in recent years. In particular,
we mention the following publications:
\begin{itemize}
\item In \cite{bedrossian2015inviscid} Bedrossian and Masmoudi established
  nonlinear inviscid damping for Gevrey $2$ regular perturbations around Couette
  flow in an infinite periodic channel. Their method of proof has further been
  extended to the setting of Landau damping \cite{bedrossian2013landau},
  \cite{Villani_long}.
\item These results were recently extended to the case of compactly
  supported Gevrey regular perturbations to Taylor-Couette flow in
  \cite{ionescu2018inviscid}.
\item In \cite{deng2018} the first author and Masmoudi constructed solutions of
  the Euler equations, s.t. the initial data is Gevrey $2$ close to Couette
  flow, the solution is well-understood for a finite time, within that finite
  time the solution exhibits growth along echo chains consistent with a loss of
  Gevrey $2$ regularity.
  Here, we in particular stress that due to the challenging control of nonlinear
  corrections, these solutions do not capture the full echo chains (the finite
  time includes about half of a chain) and hence do not rule out subsequent
  asymptotic stability. Furthermore, this work imposes a logarithmic smallness
  restriction (compare our results in Section \ref{sec:Model}). Removing this
  restriction is a key challenge in establishing our modified scattering in
  Section \ref{sec:mod}.
\item Concerning the problem of linear inviscid damping, the case of a
  finite-periodic channel was shown to behave qualitatively differently in a
  work by the second author \cite{Zill5} in that stability results are limited
  to (sharp) $H^{3/2-}$ or weighted $H^2$,   \cite{Zhang2015inviscid},
  Sobolev regularity of the vorticity. While sufficient to establish linear inviscid
  damping with the optimal decay rates of the velocity perturbation, this leaves
  a large gap compared to the Gevrey regularity requirement of existing
  nonlinear result.
  Indeed, in \cite{jia2019linear} Jia further imposes a compact support
  assumption to establish linear stability in Gevrey spaces. 
\item This gap together with the physical observation of damping suggests that
  damping of the velocity field might be a more robust mechanism than the
  stability of the vorticity. A first modest step in this direction can be found
  in \cite{zillinger2018forced} where the second author considered special types
  of forcing of the 2D Euler and Navier-Stokes equations and showed that
  damping may persist despite instability of the vorticity.
\end{itemize}
In this article, we stress the point of view that for the question of nonlinear
inviscid damping in addition to the question of linear stability of shear flows
in Sobolev and Gevrey regularity, one should consider the following two
observations:
\begin{enumerate}
\item Existing works study the infinite time asymptotic stability of the
  vorticity in higher Sobolev regularity or Gevrey regularity, from which
  (linear) inviscid damping then follows as a corollary.
  However, this is a strictly stronger condition than the physically observed
  phenomenon of inviscid damping, which is the convergence of the velocity
  field. Indeed, we show that the linearized
  problem exhibits solutions which exhibit norm inflation due to complete echo
  chains. Here, the velocity asymptotically converges despite the divergence of the vorticity as time tends to infinity.
\item In the study of nonlinear inviscid damping, the main challenge is given by
  cascades of resonances, also known as echo chains. This mechanism is, however,
  not present in the linearized problem around shear flows due to the decoupling
  structure in Fourier space of these equations.
  In this article, we identify echo chains as a secondary linear mechanism,
  where we linearize around an arbitrarily close low-frequency perturbation
  around Couette flow. For simplicity of calculations and presentation, in this
  article we fix this low-frequency perturbation as a single mode perturbation
  by $c\cos(x)$ with $c$ small. However, we think that an extension to more
  general low-frequency perturbations, while technically tedious, should follow
  by similar arguments.
  We show that this linearized problem exhibits the same echo chains and
  norm-inflation results as for the nonlinear problem, globally in time.
  Thus, \emph{echo chains are a linear mechanism}.
  Furthermore, we identify a critical regularity threshold at which stability of
  the vorticity in Gevrey regularity fails, but where damping of the velocity
  nevertheless persists.
\end{enumerate}
In order to introduce our model consider the Euler equations near Couette flow
\eqref{eq:Couette} and change to Lagrangian coordinates (with respect to Couette
flow) $(x+ty,y)$.
Then our equation is given by
\begin{align*}
  \dt \omega + \nabla^\perp \Delta_{t}^{-1} \omega \cdot \nabla \omega=0,
\end{align*}
where $\Delta_t=\p_x^2+(\p_y-t\p_x)^2$ and $\omega$ denotes the perturbation.
Let now $P_N$ denote a Littlewood-Payley projection to a
large dyadic scale $N$.
Then the projection of the equation yields
\begin{align}
  \dt P_N \omega + P_N (\nabla^\perp \Delta_t^{-1}\omega \cdot \nabla \omega)=0.
\end{align}
We may then further decompose the nonlinearity by frequency-localizing each
factor:
\begin{align}
  \sum_{N_1+N_2 \approx N} P_N  (\nabla^\perp \Delta_t^{-1}P_{N_1}\omega \cdot \nabla P_{N_2}\omega)
\end{align}
Here, we obtain several regimes. If $N_1 \approx N_2$, we may freely integrate
by parts and trade the inverse Laplacian $\Delta_t^{-1}$ for time decay.
Similarly, if $N_2\approx N, N_1\ll N_2$, we may use the incompressibility and
the inverse Laplacian to obtain a time-decreasing energy estimate.

The main source of possible growth and instability in the nonlinear problem is
thus given by the interaction of the \emph{low-frequency part of the vorticity}, $N_2
\ll N$, and the \emph{high-frequency part of the velocity}, $N_1 \approx N$.
In our model we now \emph{fix the low frequency part} $\omega_{\text{low}}=c\cos(x)$ with $c$ small and
consider
\begin{align}
  \label{eq:linearmodel}
  \dt \omega + c \sin(x) \p_y \Delta_{t}^{-1} \omega=0.
\end{align}
We stress that $\omega_{\text{low}}$ is a stationary solution (at the level of
the vorticity) of the Euler equations and that our model thus corresponds to a
secondary linearization around $v=(y,0)+ c
\nabla^{\perp}\frac{1}{1+t^2}\cos(x-ty)$, where we neglected the transport by
$\frac{c\sin(x)}{1+t^2}\p_y$ for simplicity. 

Our main result is then that this model exhibits the same \emph{echo chains} as
the full nonlinear Euler equations near Couette flow (in Gevrey regularity, see
\cite{deng2018}, \cite{bedrossian2013asymptotic}), but further exhibits
\emph{modified scattering} and linear \emph{inviscid damping} in the sense
the velocity perturbation strongly converges as $t\rightarrow \infty$, but that
the vorticity diverges in $H^{s}$ for any $s>-1$.

\begin{thm}[Summary]
  Let $0<c<0.2$, then there exists $C>0$ such that for any $s\in \R$ there exists
  solutions of \eqref{eq:linearmodel} with $\omega_0 \in
  \mathcal{G}_{C,\frac{1}{2}}$ (Gevrey class, see Section~\ref{sec:notation}) such that $\omega(t)$ converges to a limit
  $\omega_{\infty}$ in $H^{\sigma}, \sigma\leq s$, but diverges in $H^{\sigma},
  \sigma>s$. In particular, choosing $s\geq 0$ damping, that is asymptotic
  convergence of the velocity field, holds while asymptotic stability in Gevrey
  regularity fails.  
\end{thm}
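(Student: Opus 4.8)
The plan is to construct the solution explicitly on the Fourier side. Expanding $\omega(t,x,y)=\sum_{k\in\Z}\int_{\R}\hat\omega_k(t,\eta)\,e^{i(kx+\eta y)}\,d\eta$ and inserting into \eqref{eq:linearmodel}, one sees that $\eta$ enters only as a parameter and that the evolution splits, for each fixed $\eta$, into the infinite coupled ODE system
\begin{align*}
  \dt\hat\omega_k(t,\eta)=\frac{c\eta}{2}\left(\frac{\hat\omega_{k-1}(t,\eta)}{(k-1)^2+(\eta-t(k-1))^2}-\frac{\hat\omega_{k+1}(t,\eta)}{(k+1)^2+(\eta-t(k+1))^2}\right).
\end{align*}
Since the full flow is a direct integral in $\eta$ of these systems, I would build $\omega_0$ as a superposition, over a lacunary sequence $\eta_j\to\infty$, of bumps each concentrated near a single frequency $\eta_j$ and a single high mode $k_0(\eta_j)$ of order $\sqrt{\eta_j}$; the pieces then evolve independently and never interact.

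The first and main step --- the place where the real work sits --- is the single-bump analysis of Sections~\ref{sec:Model}--\ref{sec:mod}. For $\eta$ large and data $\hat\omega_{k_0}(0,\eta)=a(\eta)$, $\hat\omega_k(0,\eta)=0$ otherwise, the critical times $t_k=\eta/k$ are met in the order $t_{k_0}<t_{k_0-1}<\cdots<t_1=\eta$; near each $t_k$ the denominator $D_k=k^2+(\eta-tk)^2$ shrinks to size $k^2$ over a window of length $\sim1$ and transfers the bulk of mode $k$ down to mode $k-1$ with gain $\sim c\eta/k^2$, the transfer to mode $k+1$ and the back-reaction onto mode $k$ being non-resonant, and for $k\lesssim\sqrt{c\eta}$ this gain exceeds $1$. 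After $t_1=\eta$ the right-hand side is $O(c\eta/t^2)$ and hence integrable, so $\hat\omega_k(t,\eta)$ converges as $t\to\infty$ to a limit $\hat\omega_{\infty,k}(\eta)$ concentrated (in $k$) near $k=1$. The cancellation analysis must upgrade this heuristic to a uniform-in-time bound $|\hat\omega_k(t,\eta)|\lesssim e^{\mu(\eta)}|a(\eta)|$ together with a matching lower bound $|\hat\omega_{\infty,1}(\eta)|\gtrsim e^{\mu(\eta)}|a(\eta)|$ for the total amplification, with the \emph{modified exponent}
\begin{align*}
  \mu(\eta)=A(c)\sqrt{\eta}+r(\eta),\qquad A(c)>0,\quad r(\eta)=o(\sqrt{\eta}),
\end{align*}
where $r$ is large compared to every power of $\log\eta$; making $r$ precise is exactly what removes the logarithmic smallness constraint of \cite{deng2018}. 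Establishing all of this --- controlling the non-resonant transfers, the back-reaction between neighbouring modes, the merging of the resonances for $k\gg\sqrt{c\eta}$, and genuine convergence rather than mere boundedness --- is the main obstacle; the rest is bookkeeping.

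Granting it, I would fix $C\in(0,A(c))$ and, given $s\in\R$, choose the bump amplitudes so that $|a(\eta)|^2\sim\eta^{-1-2s}(\log\eta)^{-2}e^{-2\mu(\eta)}$ for $\eta$ large (cutting the construction off for bounded $\eta$). Then
\begin{align*}
  \|\omega_0\|_{\mathcal{G}_{C,1/2}}^2\sim\int\eta^{-1-2s}(\log\eta)^{-2}\,e^{2(C\sqrt{\eta}-\mu(\eta))}\,d\eta<\infty,
\end{align*}
since $C\sqrt{\eta}-\mu(\eta)\to-\infty$ faster than any logarithm, whereas
\begin{align*}
  \|\omega_\infty\|_{H^\sigma}^2\sim\int\eta^{2\sigma-1-2s}(\log\eta)^{-2}\,d\eta
\end{align*}
converges for $\sigma\le s$ and diverges for $\sigma>s$. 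The uniform bound and dominated convergence then give $\omega(t)\to\omega_\infty$ in $H^\sigma$ for $\sigma\le s$, while the lower bound gives $\|\omega(t)\|_{H^\sigma}^2\gtrsim\int_{1\le\eta\le t}\eta^{2\sigma}e^{2\mu(\eta)}|a(\eta)|^2\,d\eta\to\infty$ for $\sigma>s$.

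Finally, for the damping statement: on each mode $(k,\eta)$ with $k\neq0$ the symbol of $\Delta_t^{-1}$ is $O((k^2t^2)^{-1})$, so $\nabla^\perp\Delta_t^{-1}(\omega-\langle\omega\rangle_x)\to0$ in $L^2$ (hence, for $s\ge0$, in $H^1$) by dominated convergence, while on the mode $k=0$ the symbol is the $t$-independent $-\eta^{-2}$, so $\langle v\rangle_x$ converges; thus the velocity converges in Sobolev regularity (inviscid damping). On the other hand $\omega_\infty$ lies in no Gevrey class, so $\|\omega(t)\|_{\mathcal{G}_{c',1/2}}\to\infty$ for every $c'>0$, whence Gevrey stability fails. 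It remains only to carry out the single-bump estimates of Sections~\ref{sec:Model}--\ref{sec:mod}.
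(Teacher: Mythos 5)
Your proposal follows essentially the same route as the paper's own proof in Section \ref{sec:mod}: decouple in $\eta$, amplify a single high mode $k_0\sim\sqrt{\eta}$ through a complete echo chain by a factor $\exp(A(c)\sqrt{\eta})$ with the modified exponent (the content of the iterated Theorem \ref{thm:summary}, which you correctly identify as the real work and leave as a black box), use the absence of critical times after $t\approx \eta$ (Proposition \ref{prop:largetime}) for convergence, superpose over $\eta$ with amplitudes $\sim e^{-\mu(\eta)}$ times a profile tuned so the limit sits exactly at Sobolev regularity $s$, and obtain damping from the decaying multiplier of $\nabla^\perp\Delta_t^{-1}$. The only cosmetic differences are that you place the single-mode bump at $t=0$ instead of prescribing it at the first critical time $t=\eta/k_0$ and solving backwards by local well-posedness, and that you choose an explicit power-law amplitude rather than $\tilde{\psi}(\eta)/g(\eta)$ with $\psi\in H^{s}\setminus H^{\sigma}$; both variants are interchangeable with the paper's bookkeeping.
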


Our article is structured as follows:
\begin{itemize}
\item In Section \ref{sec:LWP} we introduce some changes of coordinates and
  establish local and global well-posedness of the model (with highly sub-optimal
  exponential growth bounds).
\item In Section \ref{sec:chains} we introduce the echo chain mechanism for what
  we call a ``homogeneous'' model, which neglects higher-order nearest neighbor
  interactions. This model is similar to the toy model of
  \cite{bedrossian2013inviscid} but additional takes into account cancellations
  between different modes.
  Here, the evolution turns out to be connected to estimates on the 1D
  Schrödinger problem with scaling-critical potentials.
\item In Section \ref{sec:Model} we show that for $c$ small compared to $\eta$,
  which is similar to the conditions imposed in \cite{deng2018},
  \cite{bedrossian2015inviscid}, also the full model exhibits echo chains and
  norm inflation. However, since this condition limits considerations to finite
  $\eta$ (where all norms are equivalent) a key challenge and improvement of the
  following section is to remove this restriction.
\item In Section \ref{sec:echo} we then show that the full problem satisfies the same growth.
  In particular, we show that for large $\eta$ (compared to $c^{-1}$) the
  dependence on $\eta$ differs from the one in \cite{deng2018}, \cite{bedrossian2015inviscid}  (it is given by a
  modified power law) and construct solutions which are global in time and
  exhibit inviscid damping (that is convergence of the velocity) but whose
  vorticity asymptotically diverges.
\end{itemize}
We remark that in the high regularity, small $c,\eta$ regime the nonlinear
problem and our model problem both asymptotically approach transport dynamics.
It remains a challenging problem to determine how the full
nonlinear Euler equations near Couette flow behave outside this regime and
whether they also exhibit modified asymptotics similar to this linear model. 

\subsubsection{Notation}
\label{sec:notation}
We use the following notational conventions:
\begin{itemize} 
\item The spatial Fourier transform of a function $\omega(t, x, y) \in
  L^2 (\T\times \R)$ is denoted
  by $\tilde{\omega}(t,k,\eta) \in L^2(\Z \times \R)$.
\item 
The Gevrey class $\mathcal{G}_{C,\frac{1}{s}}$ is defined in terms of the Fourier transform, that is $u \in
\mathcal{G}_{C,\frac{1}{s}}$ if $\exp(C|\eta|^{s})\tilde{u} \in L^2$. Here,
we omit the $k$ dependence in the exponent since for the functions we consider
only the region $|k|\leq |\eta|$ is of interest.
\item We use $a \lesssim b$ to denote that there exists an absolute constant
  $C>0$ such that $|a|\leq C |b|$. In particular, we omit absolute value signs
  in our notation.  
\item In our calculations $c \in (0,0.2) $ and $\eta 
\in \R $ can be treated as arbitrary but fixed
  parameters.
  While the low frequency regime allows for rather simple arguments (see
  Section \ref{sec:chains}), in the high frequency regime where $|\eta|$ is much
  larger than $c^{-1}$ much finer control of cancellations is necessary.
\item There we use $a \approx b$ to denote that there exists a constant $C_1>1$,
  such that $C_1^{-1} a\leq b \leq C_1 a$ such that $|C_{1}-1|\leq 0.01$ for
  the regime of $c$ and $\eta$ we are considering (with a possibly even smaller
  deviation for $c$ smaller and/or $\eta$ larger).
  For example, in the regime of large $\eta$ it holds that $\eta^{2-c}+\eta \approx \eta^{2-c}$.
\end{itemize}

\subsubsection*{Acknowledgments}
Yu Deng acknowledges support by NSF grant DMS-1900251.

Christian Zillinger would like to thank the Max-Planck Institute for Mathematics
in the Sciences where part of this work was written for its hospitality.

Christian Zillinger's research is supported by the ERCEA under the grant 014
669689-HADE and also by the Basque Government through the BERC 2014-2017
program and by Spanish Ministry of Economy and Competitiveness MINECO: BCAM Severo Ochoa excellence accreditation SEV-2013-0323.
\section{Local Wellposedness and Asymptotic Stability}
\label{sec:LWP}
Since our equation \eqref{eq:linearmodel} involves multiplication by a sine,
it has an explicit characterization in Fourier variables:
\begin{align*}
  \dt \tilde{\omega}(t,k,\eta)&+ \frac{c\eta}{(k-1)^2+(\eta-(k-1)t)^2} \tilde{\omega}(t,k-1,\eta)\\
&- \frac{c\eta}{(k+1)^2+(\eta-(k+1)t)^2} \tilde{\omega}(t,k+1,\eta)=0.
\end{align*}
In particular, we note that this equation decouples with respect to $\eta$
(while linearizations around shear flows instead decouple with respect to $k$).
Hence, we may consider $\eta \in \R$ to be a given parameter and introduce a new
time variable $\tau$:
\begin{align}
  \label{eq:deftau}
    t = \tau \eta.
\end{align}
With respect to this variable \eqref{eq:linearmodel} can be equivalently expressed as
\begin{align}
  \label{eq:tau}
  \begin{split}
  \p_\tau \omega (\tau,k,\eta) &+ c \frac{1}{(k-1)^2} \frac{1}{\eta^{-2} +(\frac{1}{k-1}-\tau)^2} \omega(\tau,k-1,\eta) \\ &- c \frac{1}{(k+1)^2} \frac{1}{\eta^{-2} +(\frac{1}{k+1}-\tau)^2} \omega(\tau,k+1,\eta)=0.
  \end{split}
\end{align}
In particular, we note that in this formulation all \emph{critical times} are given by
$\frac{1}{k}$ and thus independent of $\eta$.
Furthermore, there are no critical times after time $\tau=1$, which implies that
norm inflation or instability results are restricted to the evolution on small
times and that the evolution is asymptotically stable after time $\tau=2$.

\begin{prop}
  \label{prop:largetime}
  Let $X$ be a weighted $L^2$ (or $l^2$) space on the Fourier side (e.g. a
  fractional Sobolev or Gevrey space) such that the weight $\rho(k,\eta)$ satisfies
  $\sup_{\eta} \frac{\rho(k\pm 1,\eta)}{\rho(k,\eta)}\leq C_1 < \infty$.
  Then there exists a constant $C=C(cC_1)$ such that any solution of
  \eqref{eq:tau} satisfies
  \begin{align*}
    \|\omega(\tau)\|_{X}\leq C \|\omega(2)\|_{X}
  \end{align*}
  for all $\tau\geq 2$.
  Furthermore, $\omega(\tau)$ strongly converges in $X$  to a limit
  $\omega_{\infty} \in X$ as $t \rightarrow \infty$.
\end{prop}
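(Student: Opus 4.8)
The plan is to close a weighted energy estimate for \eqref{eq:tau} on $\tau\in[2,\infty)$, exploiting that this range lies past every critical time $1/k$. Since \eqref{eq:tau} decouples in $\eta$, I fix $\eta$ and write the equation as $\p_\tau\omega(\tau,k,\eta)=-a_{k-1}(\tau)\,\omega(\tau,k-1,\eta)+a_{k+1}(\tau)\,\omega(\tau,k+1,\eta)$ with $a_j(\tau)=c\,j^{-2}\big(\eta^{-2}+(j^{-1}-\tau)^2\big)^{-1}$ for $j\neq 0$ and $a_0(\tau)\equiv c$ (the value of the coefficient read off from the original, regular, Euler-derived form). The one elementary estimate needed is that for $\tau\geq2$ and $j\neq0$ one has $|1-j\tau|\geq\max\{1,\tfrac12|j|\tau\}$, whence $0<a_j(\tau)\leq 4c\,j^{-2}\tau^{-2}$: each $a_j$ with $j\ne0$ is bounded by $a_0=c$, is decreasing on $[2,\infty)$, and is integrable there.

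First I differentiate $\|\omega(\tau)\|_X^2=\sum_k\rho(k,\eta)^2|\omega(\tau,k,\eta)|^2$. Using $\rho(k\pm1,\eta)/\rho(k,\eta)\leq C_1$ and Cauchy--Schwarz, every off-diagonal term carrying a coefficient $a_j$ with $j\neq 0$ contributes at most $C\,cC_1^2\,\tau^{-2}\|\omega(\tau)\|_X^2$, which is integrable in $\tau$ and so costs only a fixed multiplicative constant in Grönwall. The sole obstruction is the coefficient $a_0\equiv c$, which by the indexing appears exactly in the equations for $k=\pm1$, coupling each of these two modes to the single mode $k=0$; this is the one interaction that does not decay, and controlling it is the heart of the matter.

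To handle it I bring in the auxiliary functional $\mathcal E(\tau):=\sum_k a_k(\tau)\,|\omega(\tau,k,\eta)|^2$ (finite on the spaces of interest; when $\rho$ is not bounded below one uses the $\rho$-weighted variant, picking up only an integrable error). Because the $k$-th mode sees its neighbours precisely through the symmetric pair of coefficients $a_{k-1},a_{k+1}$, the off-diagonal terms in $\tfrac{d}{d\tau}\mathcal E$ telescope and cancel, leaving $\tfrac{d}{d\tau}\mathcal E(\tau)=\sum_k\dot a_k(\tau)\,|\omega(\tau,k,\eta)|^2\leq 0$ for $\tau\geq 2$, since each $\dot a_k\leq0$ there. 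Hence $\mathcal E$ is non-increasing, and because $a_0\equiv c$ this yields $|\omega(\tau,0,\eta)|^2\leq c^{-1}\mathcal E(\tau)\leq c^{-1}\mathcal E(2)\lesssim\|\omega(2)\|_X^2$ uniformly in $\tau$. Inserting this bound on the $k=0$ mode into the energy identity of the previous step renders the remaining $a_0$-terms subcritical, so Grönwall closes and gives $\|\omega(\tau)\|_X\leq C\|\omega(2)\|_X$ with $C=C(cC_1)$.

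For the strong convergence: once $\|\omega(\tau)\|_X$ is bounded, the right-hand side of \eqref{eq:tau} is, mode by mode, dominated by $\big(a_{k-1}(\tau)+a_{k+1}(\tau)\big)\|\omega(\tau)\|_X$ up to the weight ratio, all of which is integrable on $[2,\infty)$ except the $a_0$-contribution to the $k=\pm1$ modes; the latter is controlled using the uniform bound on the $k=0$ mode together with $\int_2^\infty\!\big(\sum_k(-\dot a_k)|\omega(\tau,k,\eta)|^2\big)\,d\tau=\mathcal E(2)-\mathcal E(\infty)<\infty$ from monotonicity of $\mathcal E$. This shows $\int_2^\infty\|\p_\tau\omega(\tau)\|_X\,d\tau<\infty$, hence $\omega(\tau)$ is Cauchy in $X$ and converges to some $\omega_\infty\in X$. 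The step I expect to be genuinely delicate is exactly the $k=0\leftrightarrow\{\pm1\}$ coupling: a bare Schur-test/Grönwall argument there only produces exponential-in-$\tau$ growth, and it is the monotone quantity $\mathcal E$ — the fact that $\omega(\tau,0,\eta)$ is not merely bounded but tied directly to the data — that has to be exploited to get the uniform $X$-bound and the convergence.
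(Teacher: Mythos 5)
Your treatment of the $l\neq 0$ couplings is sound and is essentially the paper's own argument: for $\tau\geq 2$ every coefficient with $l\neq 0$ is dominated by an integrable function of $\tau$ (the paper uses $c\,(1+(\tau-2))^{-2}$, uniformly in $\eta$ and $l$), and a weighted energy/Grönwall estimate in $X$ with the ratio bound $C_1$ then gives both the uniform bound and, via integrability of $\p_\tau\omega$ in $X$, the strong convergence. Note that the paper's proof never confronts a non-decaying coefficient at all: it explicitly restricts to $l\in\Z$, $l\neq 0$, i.e.\ it proves the statement for the system in which only those couplings appear.

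The genuine gap is exactly at the step you single out as the heart of the matter, the constant coefficient $a_0\equiv c$. Your monotone functional $\mathcal{E}(\tau)=\sum_k a_k(\tau)|\omega(\tau,k,\eta)|^2$ is a correct and nice observation: the off-diagonal terms do cancel by the antisymmetry of the nearest-neighbor structure, $\dot a_k\leq 0$ on $[2,\infty)$, so $\mathcal{E}$ is non-increasing and $|\omega(\tau,0,\eta)|$ is uniformly bounded by the data. But this does not render the $a_0$-terms subcritical: in the equations for $k=\pm 1$ the term $c\,\omega(\tau,0,\eta)$ is bounded yet not integrable on $[2,\infty)$, and Grönwall with a non-integrable inhomogeneity yields at best linear growth, not $\|\omega(\tau)\|_X\leq C\|\omega(2)\|_X$. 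Likewise $\int_2^\infty\|\p_\tau\omega\|_X\,d\tau<\infty$ does not follow, because $\dot a_0\equiv 0$ means the identity $\int_2^\infty\sum_k(-\dot a_k)|\omega(k)|^2\,d\tau=\mathcal{E}(2)-\mathcal{E}(\infty)$ carries no information about the time decay of $\omega(\tau,0,\eta)$. Worse, the step cannot be repaired: keeping $a_0=c$, the reduced system for $u=\omega(1)-\omega(-1)$ and $w=\omega(0)$ satisfies $\p_\tau u\approx -2c\,w$ and $\p_\tau w\approx \frac{c}{\tau^2}\,u$ up to integrable errors, hence $\p_\tau^2 u\approx -\frac{2c^2}{\tau^2}u$, an Euler-type equation whose generic solutions grow like $\tau^{\frac{1}{2}(1+\sqrt{1-8c^2})}$; this is precisely the paper's resonance mechanism with the ``resonant time'' of the $k=0$ mode sitting at $\tau=\infty$, and it shows a uniform-in-time bound is false for the system as you wrote it. So the proposition must be read, as in the paper's proof, for the system without the $k=0$ coupling (that interaction excluded from \eqref{eq:tau}); with that reading your first paragraph already completes the proof and the auxiliary functional is unnecessary.
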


\begin{proof} [Proof of Proposition \ref{prop:largetime}]
  We may estimate the multipliers by
  \begin{align*}
    \frac{c}{l^2} \frac{1}{\eta^{-2} + (\frac{1}{l}-\tau)^2} \\
    \leq c \frac{1}{0+(\frac{1}{l}-2+(\tau-2))^2} \\
\leq \frac{c}{(1+(\tau-2))^2},
  \end{align*}
  irrespective of the size of $\eta$ or of $l \in \Z, l \neq 0$.
  We further remark that if $l \eta<0$ no restriction is necessary, since then
  there can't be any cancellation.
  We may estimate 
  \begin{align*}
    \dt \|\omega\|_{X}&\leq \frac{c}{1+(\tau-2)^2}(\|\omega(k+1)\|_{X}+ \|\omega(k-1)\|_{X}) \\
    &\leq \frac{2cC_1}{1+(\tau-2)^2} \|\omega\|_{X}. 
  \end{align*}
  The result then immediately follows from integrating this inequality with
  \begin{align*}
    C:= \exp (2cC_1 \|\frac{1}{1+(\tau-2)^2}\|_{L^1}).
  \end{align*}
\end{proof}
We remark that the admissible class of spaces $X$ includes fractional Sobolev
spaces $H^s$ and Gevrey spaces $\mathcal{G}_{C,s}$, but not the homogeneous
fractional Sobolev spaces due to the quotient $\frac{w(\eta\pm
  1)}{\omega(\eta)}$ degenerating for $\eta\downarrow 0$.

In order to analyze possible norm inflation, in the following we hence focus on
characterizing the evolution on the time interval $[0,2]$.
Here, as a first easy, non-optimal estimate we may roughly control the
multiplier by
\begin{align*}
\sup\limits_{l, l \neq 0} \frac{|c\eta^2|}{l^2}= c\eta^2
\end{align*}
 and thus obtain the following theorem.
\begin{thm}
  \label{thm:LWP}
  Let $X$ be as in Proposition \ref{prop:largetime}.
  Then if the initial data is localized on the mode $\eta$, it holds that
  \begin{align*}
    \|\omega(\tau)\|_{X} \leq \exp(2C_1 c\eta^{2} \min (2,\tau)) \|\omega_0\|_{X},
  \end{align*}
  for all $\tau\geq 0$. Furthermore, $\omega(\tau)$ strongly converges in $X$ as $\tau \rightarrow
  \infty$.
\end{thm}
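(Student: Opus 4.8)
The plan is to run a crude Grönwall estimate directly on the lattice system \eqref{eq:tau}, using that it decouples in $\eta$ so that $\eta$ may be treated as the fixed frequency on which the data is supported and only the nearest-neighbour shifts $k \mapsto k \pm 1$ enter. First I would record the pointwise bound on the two multipliers: since $l^2 \geq 1$ for every $l \in \Z \setminus \{0\}$ and $\eta^{-2} + (\tfrac1l - \tau)^2 \geq \eta^{-2}$, one has
\[
  \frac{c}{l^2}\,\frac{1}{\eta^{-2} + (\tfrac1l-\tau)^2} \;\leq\; c\eta^2
\]
for all $\tau \geq 0$, uniformly in $l$. Differentiating $\|\omega(\tau)\|_X$ and estimating term by term as in the proof of Proposition \ref{prop:largetime}, while using that the hypothesis $\sup_\eta \rho(k\pm 1,\eta)/\rho(k,\eta) \leq C_1$ makes the shift operators $k \mapsto k\pm1$ bounded on $X$ with norm $\leq C_1$ (after a Cauchy--Schwarz in $k$ this turns $\|\omega(\cdot\pm1)\|_X$ into $C_1\|\omega\|_X$), I obtain
\[
  \frac{d}{d\tau}\|\omega(\tau)\|_X \;\leq\; c\eta^2\bigl(\|\omega(\cdot+1)\|_X + \|\omega(\cdot-1)\|_X\bigr) \;\leq\; 2C_1 c\eta^2 \,\|\omega(\tau)\|_X .
\]
Integrating on $[0,\tau]$ gives $\|\omega(\tau)\|_X \leq \exp(2C_1 c\eta^2 \tau)\,\|\omega_0\|_X$, which is the asserted bound for $\tau \leq 2$.

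For $\tau \geq 2$ I would simply invoke Proposition \ref{prop:largetime}, which yields $\|\omega(\tau)\|_X \lesssim \|\omega(2)\|_X \leq \exp(4C_1 c\eta^2)\,\|\omega_0\|_X$: the exponent thus stays capped at its $\tau = 2$ value, which accounts for the $\min(2,\tau)$ in the statement. The multiplicative constant coming from Proposition \ref{prop:largetime} depends only on $cC_1$ and is harmless in this deliberately non-optimal estimate; concretely, for $\tau \geq 2$ one has $|\tfrac1l - \tau| \geq \tau - 1$, so the multipliers are also bounded by $c(\tau-1)^{-2}$, which is integrable in $\tau$. The strong convergence of $\omega(\tau)$ in $X$ is likewise immediate from Proposition \ref{prop:largetime}; alternatively, the same integrable tail bound gives $\|\partial_\tau\omega(\tau)\|_X \leq 2C_1 c(\tau-1)^{-2}\|\omega(\tau)\|_X$ for $\tau\geq 2$ with $\|\omega(\tau)\|_X$ already bounded, so $\int_2^\infty\|\partial_\tau\omega(\tau)\|_X\,d\tau$ converges and $(\omega(\tau))_\tau$ is Cauchy in $X$, hence converges to some $\omega_\infty\in X$.

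There is essentially no analytic obstacle here: the estimate is flagged as easy and non-optimal, and the whole argument is one Grönwall inequality together with the already-established Proposition \ref{prop:largetime}. The only two points needing a little care are (i) that the nearest-neighbour shifts $k\mapsto k\pm1$ are bounded on the weighted space $X$ --- exactly what the hypothesis on the weight $\rho$ provides, and what fails for homogeneous spaces as remarked after Proposition \ref{prop:largetime} --- and (ii) that the $\eta$ in the exponent is the single fixed frequency carrying the data, which is legitimate precisely because \eqref{eq:tau} is diagonal in $\eta$; for data spread over many frequencies one would instead pick up the supremum of $\eta^2$ over the support, which is why localization of the initial data on the mode $\eta$ is imposed in the statement.
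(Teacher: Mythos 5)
Your proposal is correct and follows essentially the same route as the paper: bound the multiplier crudely by $c\eta^2$, apply Grönwall on $[0,2]$ using the shift-boundedness supplied by the weight condition, and defer both the $\tau\geq 2$ bound and the strong convergence to Proposition \ref{prop:largetime}. The extra remarks on the integrable tail and the role of the $\eta$-localization are consistent with, and slightly more explicit than, the paper's argument.
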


\begin{proof}[Proof of Theorem \ref{thm:LWP}]
  By Proposition \ref{prop:largetime} it suffices to establish a bound for $\tau
  \leq 2$.
  Here, we estimate the Fourier multiplier in \eqref{eq:tau} by $c \eta^2$ and
  thus obtain that, for $\omega_0$ localized at frequency $\eta$, it holds that
  \begin{align*}
    \| \p_\tau \omega\|_{X} \leq c \eta^2 (\|S_{+1} \omega\|_{X}+ \|S_{-1} \omega\|_{X}) \\
    \leq 2 C_1 c \eta^2 \|\omega\|_{X},
  \end{align*}
  where $S_{\pm 1}$ denotes a shift in the $k$ dependence.
  The result then follows by Gronwall's lemma.
\end{proof}

We stress that these estimates are far from optimal. However, they allow us to
control the evolution until a small positive time $\tau_0>0$, after which a more
detailed study of resonance chains establishes more sharp bounds.

Indeed, in Section \ref{sec:echo}, we show that this linearized
model attains the same Gevrey $2$ class norm inflation results as the nonlinear
problem around Couette flow considered in \cite{deng2018},
\cite{bedrossian2015inviscid}. Furthermore, the solutions exist globally in time
and exhibit modified scattering and damping.

In order to introduce ideas, in the following Section \ref{sec:chains} we
consider the setting where $c$ is very small compared to $\eta^{-1}$.
This setting allows for highly simplified proofs and serves to introduce the
resonance mechanism in a clear way.
In Section \ref{sec:Model} we then introduce an improved ``toy model'', which includes
further cancellation mechanisms compared to the one studied in
\cite{bedrossian2013inviscid} and shows what modifications to asymptotic
convergence should be expected for large $\eta$ (compared to $c^{-1}$).
The results of Section \ref{sec:echo} then show that the full linear problem
exhibits the same kind of growth as this model and we further discuss implications
for scattering and damping.

\section{Echoes, Paths and Norm Inflation}
\label{sec:chains}
Theorem \ref{thm:LWP} of the preceding section proves wellposedness of the
evolution equation for frequency localized initial data, but provides a very rough upper bound on the possible growth.
In this section we study the evolution of equation \eqref{eq:tau}
\begin{align}
  \begin{split}
  \p_\tau \omega (\tau,k,\eta) &+ c \frac{1}{(k-1)^2} \frac{1}{\eta^{-2} +(\tau- \frac{1}{k-1})^2} \omega(\tau,k-1,\eta) \\ &- c \frac{1}{(k+1)^2} \frac{1}{\eta^{-2} +(\tau -\frac{1}{k+1})^2} \omega(\tau,k+1,\eta)=0.
  \end{split}
\end{align}
for $0\leq \tau \leq 2$ in more detail in order to obtain a finer description of
the associated norm-inflation mechanism.

Considering the first Duhamel iteration, a growth mechanism is given by
\begin{align*}
 \frac{1}{(k-1)^2} \int_{-\infty}^\infty \frac{c}{\eta^{-2}+(\tau-\frac{1}{k-1})} d\tau = c\pi \eta \frac{1}{(k-1)^2}  = c \pi \frac{\eta}{(k-1)^2}.
\end{align*}
Similarly to the nonlinear equations studied in \cite{deng2018} and
\cite{bedrossian2015inviscid}, the main norm inflation mechanism of
\eqref{eq:tau} is then for mode $(k,\eta)$ to induce growth of the mode
$(k-1,\eta)$ at time $\tau \approx \frac{1}{k}$, which in turn induces growth of
the mode $(k-2,\eta)$ at time $\tau \approx \frac{1}{k-1}$. Iterating this
heuristic along a chain until time $\tau\approx 1$, this suggests a total growth
factor of
\begin{align}
  \label{eq:growthfactor}
  \frac{\eta}{k^2} \frac{\eta}{(k-1)^2} \dots \frac{\eta}{1^2}= \frac{\eta^{k}}{(k!)^2}.
\end{align}
This factor achieves its maximum with respect to $k$ for $k \approx
\sqrt{\eta}$, which yields $\exp(C \sqrt{\eta})$ and thus a norm inflation
result consistent with Gevrey $2$ regularity.

Compared to the nonlinear results of \cite{deng2018}, we highlight the following
differences of the present setting:
\begin{itemize}
\item Due to the difficulties involved in controlling a nonlinear evolution
  around growing perturbations, \cite{deng2018} establishes such a growth chain
  until time $t \approx \frac{1}{2k}\approx \frac{1}{2\sqrt{\eta}}\ll 1$. Our
  results established for the linear equation
  \eqref{eq:tau} instead capture a full echo chain as well as the subsequent
  asymptotic stability as $\tau \rightarrow \infty$.
\item The more explicit structure of the associated Duhamel iteration scheme allows
  us to establish more precise upper and lower bounds and in particular quantify
  the dependence on the parameter $c$, i.e. the size of the low-frequency part.
  Furthermore, as we discuss in Section \ref{sec:Model}, the evolution for
  $\eta$ much larger than $k^2$ introduces additional logarithmic corrections,
  which we show in Section \ref{sec:echo} to result in a correction to the
  asymptotic evolution.
  In particular, prior works included a constraint of the form
  $c\log(1+|\eta|)\lesssim 1$, where a change in exponent by $c$ is then
  comparable to multiplication by a constant.
\item Identifying the sharp growth behavior and removing this constraint allows us to construct families of
  initial data, for which the velocity field converges while the vorticity
  diverges (see Section \ref{sec:echo})! We view this as an
  indication that nonlinear inviscid damping, understood as asymptotic
  convergence of the velocity field, might still hold in lower
  regularity despite norm inflation results for the vorticity.
\end{itemize}

As a first step, in the following theorems we improve the bounds by $\exp(C \eta^2)$ to Gevrey
$2$-type estimates of the above form.
Similarly to \cite{deng2018} and \cite{bedrossian2013asymptotic} we here at
first impose
a logarithmic smallness condition $c \leq C \ln(|\eta|)^{-1}$, which greatly
simplifies this proof.
However, for any fixed $c$ this prevents us from considering (sequences of)
arbitrarily large $\eta$, which is necessary to establish instability and
modified scattering results.
As we discuss in Sections \ref{sec:Model} and \ref{sec:inhomogeneous}, this is
not only a technical limitation of the proof, but for larger $\eta$ the
asymptotics indeed differ.
In Section \ref{sec:mod} we show that taking into account these modified
asymptotics our linear system exhibits norm inflation and echo chains for all
$\eta$, as well as modified scattering.

\begin{thm}[Resonance chain]
\label{thm:resonancechain}
  Let $\eta>1$, $l \in \N$ with $\frac{\eta}{l^2}\geq 1$ and let $0<c\leq
  \min((\ln(1+\eta^2))^{-1}, \frac{1}{100})$.
  Let further $\tau_0=\frac{1}{2}(\frac{1}{l+1}+\frac{1}{l})$ and $\tau_1=1.5$.
  Then the solution $\omega(\tau)$ of \eqref{eq:tau} with
  \begin{align*}
    \omega(\tau_0,k)=\delta_{k,l}
  \end{align*}
  satisfies
  \begin{align*}
    \omega(\tau_1,k) &\geq c^{l} (1-\frac{c}{1-c})^l \frac{\eta^{l}}{(l!)^2} \text{ if } k \in \{1,3\}, \\
    \omega(\tau_1,k) &\leq  c^{l} (1+\frac{c}{1-c})^l \frac{\eta^{l}}{(l!)^2} \text{ if } k \in \{1,3\}, \\
    |\omega(\tau_1,k)| &\leq c^{|k-l|} c^{l} (1+\frac{c}{1-c})^l \frac{\eta^{l}}{(l!)^2} \text{ if }k \not \in \{1,3\} .
  \end{align*}
  Furthermore, there exists $\omega_{\infty}(k)$ such that
  $\omega(t,k)\rightarrow \omega_{\infty}(k)$ as
  $\tau \rightarrow \infty$ and $\|\omega_{\infty}\|_{x}\leq C_X
  \|\omega(\tau_1)\|_{X}$. In particular, the associated velocity field
  converges as $\tau \rightarrow \infty$.
\end{thm}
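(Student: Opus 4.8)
The plan is to run the Duhamel iteration for \eqref{eq:tau} on the interval $[\tau_0,\tau_1]$ and track how the single excited mode $\omega(\tau_0,\cdot)=\delta_{k,l}$ cascades downward in $k$. Write the system as $\p_\tau\omega = L\omega$ where $L$ is the tridiagonal operator with off-diagonal multipliers $a_{l}(\tau):=\frac{c}{l^2}\cdot\frac{1}{\eta^{-2}+(\tau-\frac1l)^2}$. The key heuristic from \eqref{eq:growthfactor} is that each step $k\to k-1$ contributes a factor $\int_{\R} a_{k-1}(\tau)\,d\tau = c\pi\frac{\eta}{(k-1)^2}$ (in fact $\int$ over a suitable finite window is $c\,\frac{\eta}{(k-1)^2}\cdot(\text{something close to }\pi)$, but one should be careful: the statement has $\frac{\eta^l}{(l!)^2}$ without $\pi$'s, so the windows and the normalization must be chosen so the arctangent integrals evaluate to exactly the clean factor, with the discrepancy absorbed into the $(1\pm\frac{c}{1-c})^l$ correction — I would set up the window $[\frac{1}{k}-\text{gap},\frac1{k}+\text{gap}]$ and bound $\arctan$ contributions). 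The first two displayed inequalities (upper and lower bounds for $k\in\{1,3\}$) come from showing the "straight-down" path $l\to l-1\to\dots\to 1$ (and the reflection picking up $k=3$ via the $+1$ shift near the end) dominates, while all other paths are lower-order.

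The mechanism of control is a weighted $\ell^1$-type estimate on the Duhamel series indexed by paths. Expand $\omega(\tau_1) = \sum_{n\ge 0}\int_{\tau_0\le s_1\le\dots\le s_n\le\tau_1} L(s_n)\cdots L(s_1)\,\delta_l\,ds$. Each term is a sum over walks $l=k_0,k_1,\dots,k_n$ with $|k_{j}-k_{j-1}|=1$, weighted by $\prod_j \pm a_{k_{j-1}\wedge k_j\text{-ish}}(s_j)$ integrated over the ordered simplex. The main point is the separation of critical times: mode $k$ is "active" only near $\tau=\frac1k$, and since $\frac1l<\frac1{l-1}<\dots<1$ these windows are ordered and essentially disjoint (this is where $\tau_0=\frac12(\frac1{l+1}+\frac1l)$ and $\tau_1=1.5$ enter — $\tau_0$ sits just after the $\frac1{l+1}$ resonance so the chain starts cleanly, and $\tau_1=1.5>1$ so all resonances $\frac1k$, $k\ge1$, are included and nothing after $\tau=1$ matters, per the remark following \eqref{eq:tau}). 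Any path that "backtracks" (goes $k\to k-1\to k$) must pay an extra factor of $c$ times a time-window integral that, because of the ordering of critical times, is controlled — this is the source of the $c^{|k-l|}$ gain for $k\notin\{1,3\}$ and of the geometric $(1\pm\frac{c}{1-c})^l$ corrections to the main term. Concretely I would prove by induction on the chain length a two-sided bound on $\omega(\sigma_j,k_j)$ just past the $j$-th window, of the form $c^{j}\prod_{i=1}^{j}\frac{\eta}{(l-i+1)^2}\cdot(1\pm\frac{c}{1-c})^{j}$, using the elementary bound $\int a_m(\tau)\,d\tau\le c\pi\eta/m^2$ and $\sup_\tau a_m(\tau)\le c\eta^2/m^2$, together with $\tau_1-\tau_0\le 2$ to close the geometric series $\sum_j (\text{stuff})\le \frac{1}{1-c}$-type bounds; the logarithmic smallness $c\le(\ln(1+\eta^2))^{-1}$ is exactly what makes $\eta^{\pm c}\approx 1$, so that the accumulated $\prod(1\pm\frac{c}{\cdots})$ stays within the claimed constants uniformly.

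The convergence and velocity-damping tail is the easy part: $\tau_1=1.5>1$, so by the remark after \eqref{eq:tau} there are no critical times for $\tau>1$, and Proposition~\ref{prop:largetime} (applied with starting time $2$, after a harmless bounded-growth step on $[1.5,2]$ via Theorem~\ref{thm:LWP} or a direct repeat of the Proposition's Gr\"onwall argument on $[1.5,\infty)$, since the multiplier bound $\frac{c}{(1+(\tau-\tau_1))^2}$ there works verbatim from any $\tau_1\ge 1$) gives strong convergence $\omega(\tau,k)\to\omega_\infty(k)$ in any admissible $X$, with $\|\omega_\infty\|_X\le C_X\|\omega(\tau_1)\|_X$. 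Convergence of the velocity then follows since the velocity multiplier $\frac{\eta}{k^2+(\eta-kt)^2}$ decays in $t$ and maps the bounded-in-$X$ vorticity to a convergent velocity.

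The main obstacle I expect is the bookkeeping of cancellations in the path sum: getting the \emph{clean} factor $\frac{\eta^l}{(l!)^2}$ (rather than $\frac{(\pi c\eta)^l}{(l!)^2}$) means the window integrals must be handled so the arctangent endpoints contribute exactly the right constant, and the deviation from $\pi$ — together with all backtracking paths and the tails of each resonance window leaking into the neighboring one — must be shown to fit inside the multiplicative error $(1\pm\frac{c}{1-c})^l$ rather than into the exponent of $\eta$. This requires a carefully chosen partition of $[\tau_0,\tau_1]$ into per-mode windows and a uniform-in-$\eta$ control of the cross terms, which is precisely the place where the hypothesis $c\le(\ln(1+\eta^2))^{-1}$ is doing real work and where the later sections must remove it by a finer analysis.
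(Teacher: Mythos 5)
Your proposal follows essentially the same route as the paper's own proof: a path-indexed Duhamel expansion, a resonance-window-by-window induction that tracks the dominant mode along the chain $l\to l-1\to\dots\to 1$, geometric-series control (in powers of $c$) of non-resonant and backtracking paths, and Proposition~\ref{prop:largetime} for the tail $\tau\ge \tau_1$. The ``main obstacle'' you flag --- uniform-in-$\eta$ control of paths that revisit the resonant mode within a single window --- is precisely what the paper's single-resonance estimate (Proposition~\ref{prop:resonance}) supplies, by rescaling $s\mapsto \eta\bigl(s-\tfrac1l\bigr)$ and showing each such return costs a factor of order $c\ln\bigl(1+\tfrac{\eta}{l^2}\bigr)$, so that the sum over returns exponentiates to $\bigl(1+\tfrac{\eta}{l^2}\bigr)^{Cc}=O(1)$ exactly under the hypothesis $c\le(\ln(1+\eta^2))^{-1}$, matching the use you anticipated for that assumption.
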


We remark that such a result also yields bounds for frequency-localized initial
data globally in time by using Theorem \ref{thm:LWP} to control the evolution
until time $\tau_0$.
In order to prove Theorem \ref{thm:resonancechain} we use precise description of the evolution
operator around single resonances $\tau\approx \frac{1}{k}$ for $k=l,l-1, \dots,
1$, which we then combine to establish the over all growth.
\subsubsection{Single resonance estimates}
\label{sec:singleresonance}
We note that in equation \eqref{eq:tau} modes only directly interact with their
nearest neighbors. Thus, if we prescribe that $\omega(\tau_0)=e^{ik_0 x +i \eta
  y}$, the value of $\tilde{\omega}(\tau_1,k,\eta)$ involves sequences
$\gamma=(k_0,k_0-1,\dots, k)$, where a mode $\gamma_i$ influences a mode
$\gamma_{i+1}$ by nearest neighbor interaction.
Indeed, any finite Duhamel iteration can be associated with a sum over all such
sequences with an upper bound on their length and integrals of the form
\begin{align}
  \label{eq:pathintegral}
  \iint_{\tau_0 \leq s_1 \leq s_2 \leq \dots \leq \tau_1} \prod_{i=1}^{|\gamma|}\frac{1}{\gamma_i^2}\frac{c}{\eta^{-2}+(s_i-\frac{1}{\gamma_i})^2}.
\end{align}
In order to simplify discussion, we introduce some notation.
\begin{defi}
  Let $k,k_0 \in \Z$ with $\sgn(k)=\sgn(k_0)$. Then a \emph{path} $\gamma$ from
  $k_0$ to $k$ is a finite sequence $\gamma=(\gamma_i)_{i=1}^n$ with
  $\gamma_1=k_0, \gamma_n=k$ and $|\gamma_{i+1}-\gamma_i|=1$.
  We call the integral \eqref{eq:pathintegral} the \emph{integral} associated to
  $\gamma$, $I[\gamma]$ and call $n-1=:|\gamma|$ the \emph{length} of $\gamma$.
\end{defi}
We remark that equation \eqref{eq:tau} introduces a post-multiplication by
$\sin(x)$ and a single Duhamel iteration hence results in paths $(k_0,k_0+1)$ and
$(k_0,k_0-1)$, which we thus denote as paths of length $1$.

Given a time interval $\frac{1}{l+1}< \tau_0< \frac{1}{l} < \tau_1 <
\frac{1}{l-1}$, we observe that integrals tend to be small unless $\gamma_i=l$
for some $i$.
Indeed, if $\gamma_i \neq l$, then $|\frac{1}{\gamma_i}-\frac{1}{l}| \geq
\frac{c}{l^2}$ and we may estimate each integral with respect to
$s_i$ by 
\begin{align}
  \label{eq:1}
  \begin{split}
  \frac{1}{l^2} \int_{|\tau-\frac{1}{l}|\geq \frac{1}{2l(l-1)}} \frac{c}{\eta^{-2}+(\tau-\frac{1}{l})^2} \\
  \leq  \frac{1}{l^2} \int_{s\geq \frac{1}{2l(l-1)}} \frac{c}{\tau^2} d\tau \leq 4 c,
  \end{split}
\end{align}
uniformly in $\eta$ and in $l$.

\begin{defi}
  Let $l \in \N$ and define $T_0:=\frac{1}{2}(\frac{1}{l+1}+\frac{1}{l})<
  \frac{1}{l}<\frac{1}{2}(\frac{1}{l-1}+\frac{1}{l-2})=:T_1$.
  Let further $k, k_0 \in \N$ and let $\gamma$ be a path from $k_0$ to $k$.
  We call $\gamma_i$ \emph{resonant} if $\gamma_i=l$ and \emph{non-resonant}
  else.
  A path $\gamma$ is called \emph{non-resonant} if $\gamma_i$ is non-resonant
  for all $i$.
\end{defi}

\begin{prop}[Single resonance bound]
  \label{prop:resonance}
  Let $l \in \N$, $\eta>0$ and let
  \begin{align*}
  \tau_0:=\frac{1}{2}(\frac{1}{l+1}+\frac{1}{l})<\frac{1}{l}< \frac{1}{2}(\frac{1}{l}+\frac{1}{l-1})=: \tau_1,
  \end{align*}
  and suppose the $c>0$ is sufficiently small that $2c \leq \ln(1+ \eta)^{-1}$.
  \begin{enumerate}
  \item Let $k_0 \in \N, k_0\neq l$ and let $\omega(\tau, \cdot)$ be the solution of \eqref{eq:tau} with
  \begin{align*}
    \omega(\tau_0,k)= \delta_{k, k_0}.
  \end{align*}
  Then it holds that
  \begin{align*}
    |\omega(\tau_1,k)- \omega(\tau_0,k)| \leq C (c^{|k-l|+|k_0-l|} \frac{\eta}{l^2} + c^{|k-k_0|})
  \end{align*}
  for all $k \in \N$.
  Here, with slight abuse of notation $|k-l|$ denotes the length of the shortest
  path connecting $k$ and $l$. This agrees with the absolute value if $k\neq
  l$, but is $2$ is $k=l$, since the shortest path $(k,k\pm 1, k)$ has length $2$.
\item Let $\omega(\tau,\cdot)$ be the solution of \eqref{eq:tau} with
    \begin{align*}
    \omega(\tau_0,k)= \delta_{k,l}.
  \end{align*}
    Then it holds that
    \begin{align*}
      \omega(\tau_1,l\pm 1) &\approx \frac{\eta}{l^2}, \\
      |\omega(\tau_1,l)-1| &\leq C c^2 \frac{\eta}{l^2}, \\
      |\omega(\tau_1,k)| & \leq C (c^{|k-l|}(1+\frac{\eta}{l^2})) \text{ else}.
  \end{align*}
  \end{enumerate}
\end{prop}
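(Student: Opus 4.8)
The plan is to run the Duhamel expansion \eqref{eq:pathintegral} and organize the sum over paths $\gamma$ from $k_0$ to $k$ according to how often a path sits at the resonant index $l$. Everything reduces to two estimates for the scalar time integrals appearing in \eqref{eq:pathintegral} on $[\tau_0,\tau_1]$. For the resonant mode,
\begin{align*}
  \mathcal{R}:=\int_{\tau_0}^{\tau_1}\frac{c}{l^2}\,\frac{ds}{\eta^{-2}+(s-\frac{1}{l})^2}
  =\frac{c\eta}{l^2}\Bigl(\arctan\frac{\eta}{2l(l-1)}+\arctan\frac{\eta}{2l(l+1)}\Bigr),
\end{align*}
so that $\mathcal{R}$ is comparable to $c\eta/l^2$ and in any case $\mathcal{R}\le\pi c\eta/l^2$ (for $l=1$ one runs the interval up to $\tau_1=1.5$). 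For a non-resonant mode $m\ne l$ the analogous integral over $[\tau_0,\tau_1]$ is $\le Cc$, uniformly in $\eta$ and $m$: on that interval $|s-\frac{1}{m}|\gtrsim 1/l^2$ while $\tau_1-\tau_0$ is of size $1/l^2$, which is precisely \eqref{eq:1}. Thus, in the path picture, a path contributes a factor comparable to $\mathcal{R}$ for each index $i<|\gamma|$ with $\gamma_i=l$, and a factor $\le Cc$ at every other step, while the sign $\varepsilon(\gamma)=\pm1$ records the $\mp$ picked up at each $\pm1$ move in \eqref{eq:tau}.

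For part (1) I would write $\omega(\tau_1,k)-\delta_{k,k_0}=\sum_{|\gamma|\ge1}\varepsilon(\gamma)I[\gamma]$ and split according to whether $\gamma$ meets $l$. A path avoiding $l$ has length $\ge\max(|k-k_0|,2)$ and no resonant factor; since there are $\le 2^n$ paths of length $n$, summing the geometric series in $c$ over all of them gives the term $Cc^{|k-k_0|}$ (or $Cc^2$ if $k=k_0$). A path meeting $l$ has length $\ge|k_0-l|+|l-k|$, carries at least one factor $\mathcal{R}$, and all surplus steps are non-resonant; the shortest such path (from $k_0$ to $l$ to $k$) contributes an amount comparable to $\mathcal{R}\,c^{|k_0-l|+|l-k|-1}\lesssim c^{|k_0-l|+|k-l|}\,\eta/l^2$, and — provided longer and $l$-revisiting paths are summable, see below — the sum of all of them is of this order. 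Part (2) is the same decomposition with $k_0=l$: the length-one paths $l\to l\pm1$ give $\omega(\tau_1,l\pm1)=\mp\mathcal{R}\,\omega(\tau_0,l)+(\text{lower order})$, hence the stated comparability of $\omega(\tau_1,l\pm1)$ with $\eta/l^2$; the paths $l\to l\pm1\to l$ give the leading correction to $\omega(\tau_1,l)$, of size $\lesssim c\mathcal{R}\lesssim c^2\eta/l^2$; and for $k\notin\{l-1,l,l+1\}$ every path from $l$ to $k$ carries the resonant factor at $\gamma_1=l$ plus $\ge|k-l|-1$ non-resonant steps, so these sum to $\lesssim c^{|k-l|}(1+\eta/l^2)$.

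The one genuinely delicate point — and where the hypothesis $2c\le(\ln(1+\eta))^{-1}$ enters — is the summability of the paths revisiting $l$. A path making $m$ excursions $l\to l\pm1\to l$ carries $m+1$ resonant factors $\mathcal{R}^{m+1}$, and since $\mathcal{R}$ need not be small when $\eta\gg l^2/c$, a term-by-term absolute-value bound would cost an unacceptable $e^{Cc\mathcal{R}}$. Instead one groups these contributions: each excursion carries the same sign $(-1)$, and summing over its direction, the block of $m$ excursions contributes
\begin{align*}
  (-1)^m\int_{\tau_0\le s_1\le\cdots\le s_{2m+1}\le\tau_1}\ \prod_{j\ \mathrm{odd}}g(s_j)\ \prod_{j\ \mathrm{even}}\bigl(h_+(s_j)+h_-(s_j)\bigr)\,ds_1\cdots ds_{2m+1},
\end{align*}
where $g$ is the resonant integrand and $h_\pm$ the non-resonant integrands associated to $l\pm1$. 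This is exactly the $(2m+1)$-st term in the alternating series solving the linear system $\partial_\tau\omega(l)=-h_-\omega(l-1)+h_+\omega(l+1)$, $\partial_\tau\omega(l-1)=g\,\omega(l)$, $\partial_\tau\omega(l+1)=-g\,\omega(l)$ on $[\tau_0,\tau_1]$; since $\omega(l-1)=-\omega(l+1)$ there, it reduces to a two-variable system with purely imaginary instantaneous eigenvalues $\pm i\sqrt{g(h_-+h_+)}$, hence a bounded oscillation whose total accumulated phase $\int_{\tau_0}^{\tau_1}\sqrt{g(h_-+h_+)}\,ds$ is at most of order $c\,\ln(e+\eta)$ and is kept below an absolute constant strictly less than $\pi$ by the logarithmic hypothesis — so it completes less than a half-turn. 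Consequently $\omega(l)$ stays within a factor $1+O(c\ln\eta)$ of its initial value, and $\omega(\tau_1,l\pm1)$ equals $\mp\mathcal{R}\,\omega(\tau_0,l)$ up to relative error $O(c\ln\eta)$, which yields both the two-sided estimate for $\omega(\tau_1,l\pm1)$ and the bound $|\omega(\tau_1,l)-1|\lesssim c\mathcal{R}$. (For the upper bounds alone one may skip the oscillator analysis: the feedback onto $\omega(l)$ is sign-definite, so $|\omega(s,l)|$ is monotone and $\le|\omega(\tau_0,l)|$, whence $|\omega(\tau_1,l\pm1)|\le\mathcal{R}\,|\omega(\tau_0,l)|$ directly.) With this block estimate in hand, the incoming leg from $k_0$ to $l$, the outgoing leg from $l$ to $k$, all non-resonant detours along them, and the perturbative couplings of $l\pm1$ to $l\pm2$ are summed by the elementary geometric estimate in $c$ used in part (1), which completes the proof.
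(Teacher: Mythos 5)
Your overall skeleton (Duhamel expansion over paths, the computation $\mathcal{R}\approx c\eta/l^2$ for the resonant integral, the bound \eqref{eq:1} and the geometric series $\sum(2c)^{|\gamma|}$ for non-resonant paths) matches the paper's proof. The gap is exactly at the point you flag as delicate: the paths revisiting $l$. Your resummation of the excursion blocks into the three-mode (and, by oddness, two-mode) system is legitimate, but the way you then control that system is not. The inference ``purely imaginary instantaneous eigenvalues $\pm i\sqrt{g(h_-+h_+)}$, hence bounded oscillation'' is invalid for this strongly non-autonomous system: near the resonance $g\sim c\xi^2$ varies on the time scale $\xi^{-1}$, which is far shorter than the putative period $(g h)^{-1/2}\sim (c\xi)^{-1}$ divided by $c$, so no WKB/adiabatic phase argument applies, and there is no conserved energy for this system. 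Indeed the true behavior is power-law growth, not rotation, and your two intermediate claims are false in the regime still allowed by the hypothesis $2c\le\ln(1+\eta)^{-1}$: by the paper's own sharp analysis (Theorem \ref{thm:homogeneous_full}), the resonant mode ends at size $u_2(t_1)\approx c^{4-2\gamma_2}\xi^{\gamma}u_2(t_0)$, which is much larger than $u_2(t_0)$ when, e.g., $l=1$ and $\eta\approx e^{1/(2c)}$, so $\omega(l)$ does not stay within a factor $1+O(c\ln\eta)$ of its initial value, and $|\omega(s,l)|$ is not monotone nor bounded by $|\omega(\tau_0,l)|$ (the sign-definite feedback pushes $\omega(l)$ through zero and then amplifies it, fed by the now-large neighbors). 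The final bound you want, $|\omega(\tau_1,l)-1|\lesssim c\mathcal{R}$, happens to be weak enough to be true, but your route to it (and to the two-sided bound for $\omega(\tau_1,l\pm1)$ with relative error $O(c\ln\eta)$) does not establish it.

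The paper closes this step differently, and without any sign information: it is a term-by-term absolute-value bound, but a refined one. Between two consecutive visits to $l$ there is at least one non-resonant factor, and instead of charging it its full $L^1$ norm the paper bounds that intermediate block by $(Cc)^{\mathrm{len}}\,|s_{i_{\kappa+1}}-s_{i_\kappa}|$, keeping a factor linear in the gap between consecutive resonant times. After rescaling $s\mapsto\eta^{-1}(s-\tfrac1l)$, the iterated resonant integral with these linear numerators produces, per monomial, at most $\tfrac{1}{j_1!}\bigl(C\ln(1+\tfrac{\eta}{l^2})\bigr)^{j_1}$, so each revisit of $l$ costs $c\ln(1+\eta/l^2)$ rather than $\mathcal{R}$, and the sum over revisits is $\exp\bigl(Cc\ln(1+\tfrac{\eta}{l^2})\bigr)\le e^{\tilde C}$, which is where the logarithmic hypothesis enters. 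This is precisely the device that avoids the $e^{C\mathcal{R}}$ loss you correctly identified, and it requires no oscillation or cancellation argument; the genuine exploitation of signs is deferred to Sections \ref{sec:Model} and \ref{sec:echo}, where the logarithmic constraint is removed and the exponent is modified to $\gamma<1$. To repair your proof, replace the eigenvalue/phase step either by this linear-factor bookkeeping or by a rigorous analysis of the two-mode block of the type carried out in Section \ref{sec:Model} (rough power-law bound plus self-improvement), and then only claim the bounds on $\omega(\tau_1,l)$ and $\omega(\tau_1,l\pm1)$ that the Proposition actually asserts.
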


\begin{proof}[Proof of Proposition \ref{prop:resonance}]
  We argue by a Duhamel-iteration approach and summing over all paths
  starting in $k_0$ and ending in $k$.
  Here, we first note that if a path is non-resonant, we may estimate its
  contribution by $c^{|\gamma|}$. If we then estimate the number of such paths
  from above by $2^{|\gamma|}$, the contribution by all non-resonant paths is
  bounded by
  \begin{align*}
    \sum_{|\gamma|\geq \text{dist}} (2c)^{|\gamma|} = \frac{1}{1-2c} (2c)^{\text{dist}},
  \end{align*}
  where dist is the length of the shortest (non-resonant) path connecting $k$
  and $k_0$.
  Our main challenge in the following is thus going to be to control resonances
  and in particular the interaction between multiple resonances (that is,
  $\gamma_i=l$ for several indices $i$).

  \underline{Ad 1:} Let $k_0 \neq l$ be given and let $k \in \N$.
  Let further $\gamma$ be a path starting in
  $k_0$ and ending in $k$ and denote $j=|\gamma|$.
  We have already discussed the case of all non-resonant paths above, so
  suppose that $\gamma_i=l$ for some $i$ and let $(i_\kappa)_{\kappa=1}^n$ denote all such indices.
  We note that we first have to reach $l$ starting from $k_0$ and
  thus $i_1\geq |k_0-l|$. Similarly it follows that $|i_n-j|\leq |k-l|$.
  Furthermore, $i_{\kappa}\leq i_{\kappa+1}-2$, since consecutive entries in a
  path are distinct.

  In order to bound $I[\gamma]$, first consider two subsequent resonances
  indices $i_{\kappa}$ and $i_{\kappa+1}$. Keeping these indices fixed, we first
  integrate over the intermediate indices $s^1, \dots s^{l}$ and consider
  \begin{align*}
    \int_{s_{i_\kappa}\leq s^1\leq \dots \leq s^{l}\leq s_{i_{\kappa+1}}} \prod_{j=1}^{l}\frac{1}{\gamma_j^2} \frac{c}{\eta^{-2}+(s^j-\frac{1}{\gamma_j})^2} ds_j \leq (Cc)^l |s_{i_{\kappa+1}}-s_{i_\kappa}|,
  \end{align*}
  where we used that by assumption all these $\gamma_{j}\neq l$ are non-resonant and can be
  bounded by powers of $c$ and bounded one of the integrals by $c
  (s_{i_{\kappa+1}}-s_{i_\kappa})$.
  
  Repeating this argument for all $\kappa$, we only need to consider the
  integrals with respect to the resonances:
  \begin{align*}
    (cC)^{j} \iint_{T_0\leq s_{i_1}\leq s_{i_2}\leq \dots \leq s_{i_n}\leq T_1}
    \frac{1}{\eta^{-2}+(s_{i_n}-\frac{1}{l})^2}
    \prod_{\kappa=1}^{n-1}\frac{s_{i_{\kappa+1}}-s_{i_\kappa}}{\eta^{-2}+(s_{i_\kappa}-\frac{1}{l})^2}.
  \end{align*}
  Rescaling and shifting all $s_{\cdot}$ as $\eta^{-1}(s-\frac{1}{l})$, we obtain a factor $\eta^{-j}$ from the
  Jacobian, a factor $\eta^{-j+1}$ from the linear factors and a factor
  $\eta^{2j}$ from the denominators and thus in total:
  \begin{align*}
        (cC)^{j} \eta \iint_{\eta(T_0-\frac{1}{l})\leq s_{i_1}\leq s_{i_2}\leq \dots \leq s_{i_n}\leq \eta(T_1-\frac{1}{l})}
    \frac{1}{1+s_{i_n}^2}
    \prod_{\kappa=1}^{n-1}\frac{s_{i_{\kappa+1}}-s_{i_\kappa}}{1+s_{i_\kappa}^2}.
  \end{align*}
  It remains to estimate the integral over this now $\eta$ dependent region.

  Expanding the product in the numerator and looking at each summand separately
  we need to consider monomials in the numerator.
  If a factor $s_{i_\kappa}$ does not appear in the numerator we simply bound by
  \begin{align*}
    \int_{\R} \frac{1}{1+s_{i_\kappa}^2}=\pi. 
  \end{align*}
  If it appears once, we may compute
  \begin{align*}
    \int_{s} \frac{|s|}{1+s^2}=\frac{1}{2}\ln(1+s^2)+c.
  \end{align*}
  If it appears twice, we bound
  \begin{align*}
    \int_{s_{i_{\kappa-1}}\leq s_{i_\kappa}\leq s_{i_{\kappa+1}}} \frac{s_{i_{\kappa}}^2}{1+s_{i_\kappa}^2} \leq s_{i_{\kappa+1}-s_{i_{\kappa-1}}}, 
  \end{align*}
  but in this way $i_{\kappa}$ does not appear in the integral anymore and we
  can repeat the above argument with this resonance removed.
  Hence, we only need to consider monomials where each $s_{i_\kappa}$ appears
  either to power $1$ or does not appear:
  \begin{align*}
    \iint_{\eta(T_0-\frac{1}{l})\leq s_{i_1}\leq s_{i_2}\leq \dots \leq s_{i_n}\leq \eta(T_1-\frac{1}{l})} \prod_{i_{\kappa} \text{ appears}} \frac{|s_{i_{\kappa}}|}{1+s_{i_\kappa}^2} \prod_{i_{\kappa} \text{ does not}} \frac{1}{1+s_{i_\kappa}^2}.
  \end{align*}
  As discussed above the integrals over the ``does not'' case can be bounded by $\pi^{j_2}$, where $j_2=j-j_1$ denotes the number of such cases.
  The integral over the first product is bounded by a power of a logarithm:
  \begin{align}
    \frac{1}{j_1!} \left(\frac{1}{2}\ln(1+(\eta(T_1-\frac{1}{l}))^2) +\frac{1}{2}\ln(1+(\eta(T_0-\frac{1}{l}))^2)\right)^{j_1} \approx \frac{1}{j_1!} \left(C\ln(1+\frac{\eta}{l^2})\right)^{j_1}.
  \end{align}
  Recalling the additional prefactor $c^{|\gamma|}$, summing over all such $j_1$
  then leads to a bound of this contribution by 
 \begin{align*}
    (1+\frac{\eta}{l^2})^{Cc} = \exp(Cc\ln(1+\frac{\eta}{l^2}))\leq \exp(\tilde{C}),
 \end{align*}
 where we used the logarithmic smallness assumption
 \begin{align*}
   c\ln(1+\frac{\eta}{l^2})\ll 1.
 \end{align*}
 Hence, this exponential can be treated as a perturbation provided $c$ and thus
 $\tilde{C}$ is sufficiently small.
 \\
 
  \underline{Ad 2:} We proceed similarly as in case $1$, but note that
  \begin{align*}
    \int_{\tau_0}^{\tau_1} \frac{1}{l^2}\frac{c}{\eta^{-2}+(s-\frac{1}{l})} \approx c \frac{\eta}{l^2}.
  \end{align*}
  All longer paths starting in $l$ and ending in $l- 1$ have length at least $3$
  and hence the sum over all these paths can again be controlled by
  \begin{align*}
    \frac{c^2}{1-c} \frac{c \eta} {l^2}.
  \end{align*}
  We hence obtain the claimed comparability with factors $1\pm \frac{c^2}{1-c}$.
  For $k=l$, we note that the shortest non-trivial path has length $2$ and for
  $k\not \in {l-1,l,l+1}$ we argue as before except that all paths start in $\gamma_1=l$.
\end{proof}
We remark that by using the linearity of the problem this further yields a
convolution-type bound for $\omega(\tau_0)$ not being localized on a single
mode.
In the following we then combine these mode-wise upper and lower bounds as
well as the local wellposedness of Theorem \ref{thm:LWP} to construct an
explicit example of a function that exhibits the Gevrey norm inflation up to
time $\tau=1$ and afterwards is asymptotically stable and hence also exhibits
inviscid damping. In Section \ref{sec:echo} we show with considerable technical
effort that a similar but distinct result also hold if $\eta$ does not satisfy a
logarithmic bound.

\subsubsection{Proof of Theorems \ref{thm:resonancechain}}
\label{sec:proof-theorems}
Using the single-resonance results of Proposition \ref{prop:resonance} we are
now ready to prove Theorem \ref{thm:resonancechain}.

We proceed in multiple steps:
\begin{enumerate}
\item We use local well-posedness to control the evolution from time $\tau=0$ up
  to a small positive time $\tau^*=\tau^*(c,\eta)\ll 1$.
\item We next choose $l \in \N$ such that $\frac{1}{l}>\tau^*$ and $l$ is
  maximal with this property and prescribe $\omega(\tau)$ at the time
  $\tau_0=\frac{1}{2}(\frac{1}{l+1}+\frac{1}{l})<\frac{1}{l}$ to be localized a
  frequencies $(l,\eta)$.
  Proposition \ref{prop:resonance} then allows us to control the evolution up to
  time $\tau_1=\frac{1}{2}(\frac{1}{l}+\frac{1}{l-1})>\frac{1}{l}$,
  where we in particular obtain upper and lower bounds on the modes $l\pm 1$ and upper
  bounds on all other modes.
\item We then iterate this control another $l-1$ times and establish upper
  bounds on all modes and lower bounds along our chain of resonances
  $(l,l-1,l-2,\dots, 1)$ until after time $\tau=1$.
\item By our construction of the coordinate $\tau$, after this time no
  resonances appear anymore and we may use Proposition \ref{prop:largetime} to
  control the long-time asymptotic behavior.
\end{enumerate}

\begin{proof}[Proof of Theorem \ref{thm:resonancechain}]
  We note that it suffices to establish upper and lower bounds on $\omega$ until
  time $\tau=1.5$, since after that time asymptotic stability and convergence of
  the velocity field follow by Proposition \ref{prop:largetime}.
  
  Let thus $l \in \N$ be given, to be fixed later.
  By Theorem \ref{thm:LWP} we may find initial data such that at time
  $\tau_l:=\frac{1}{2}(\frac{1}{l-1}+\frac{1}{l})$ it holds that
  \begin{align}
    \label{eq:induction1}
    |\omega(\tau_l,l)| \geq 0.5 \max |\omega(\tau_l)|=: 0.5 \theta.
  \end{align}
  We then apply Proposition \ref{prop:resonance} using the linearity and a
  triangle inequality to obtain a convolution bound.
  That is, for $k \not \in \{l-1,l,l+1\}$, we obtain that
  \begin{align*}
    |\omega(\tau_{l+1},k)-\omega(\tau_{l},k)|&\leq C (c^{|k-l|}(1+\frac{\eta}{l^2}))|\tilde{\omega}(\tau_l,l)| +\sum_{k_0\neq l} |\tilde{\omega}(\tau_l,k_0)| C (c^{|k-l|+|k_0-l|} \frac{\eta}{l^2} + c^{|k-k_0|}) \\
    &\leq \theta C (c^{|k-l|}(1+\frac{\eta}{l^2}))+ \theta C c^{|k-l|}\frac{1}{1-c}\frac{\eta}{l^2} +\frac{\theta}{1-c}.
  \end{align*}
  If $k=l$, the first term is replaced by
  \begin{align*}
    Cc^2 \frac{\eta}{l^2}.
  \end{align*}
  Finally, if $k\in \{l-1,l+1\}$, we obtain that
  \begin{align*}
    |\omega(\tau_{l+1},k)\mp C_{\pm} c\frac{\eta}{l^2} \omega(\tau_l,l) - \omega(\tau_{l},k)| \leq \theta C c^{|k-l|}\frac{1}{1-c}\frac{\eta}{l^2} +\frac{\theta}{1-c},
  \end{align*}
  where $C_{\pm 1}\approx 1$.
  Recalling that $\omega(\tau_l,l)$ achieves $\theta$ within a factor $2$ and
  using that $c\frac{\eta}{l^2}\gg 1$, if follows that
  \begin{align*}
   \pm c\frac{\eta}{l^2} \omega(\tau_l,l) \approx \omega(\tau_{l+1},l \pm 1) \geq 0.5 \max |\omega(\tau_{l+1})|.
  \end{align*}
  also achieves the new maximum within a factor $2$ and is larger than the
  previous maximum by a factor $c\frac{\eta}{l^2}$.
  
  We may thus repeat our application of Proposition \ref{prop:resonance}
  iteratively until for $k=1$, we obtain that
  \begin{align*}
    \omega(\tau_1,1)\approx \frac{c^{l}\eta^{l}}{(l!)^2} \omega(\tau_l,l)
  \end{align*}
  achieves the full growth along a chain and again is comparable to the supremum
  at that time.
\end{proof}
While this result is quite useful and shows the growth mechanism, the
logarithmic bound
\begin{align*}
  c \ln(1+\frac{\eta}{l^2}) \ll 1,
\end{align*}
prevents us from considering $\eta$ arbitrarily large. In particular, reverting
the change of variables $t \mapsto \tau$, this means that after a final time
$t=\max \eta$ there are no more resonances and the evolution is asymptotically
stable. Furthermore, since we can only consider a finite set in $\eta$ all norms
are equivalent and thus the associated growth while suggestive of a Gevrey
regularity class is consistent with any norm.

We remark that also \cite{deng2018}, \cite{bedrossian2015inviscid} contain a
similar constraint. While one might at first hope that this is a purely
technical constraint, in our proof we saw various logarithmic terms appearing,
where in particular the contribution by the path $\gamma=(l,l-1,l,l-1)$ is also
bounded below. It is hence a large, non-negligible correction, which can not be
treated perturbatively.
Thus, our main goal in the following is to understand how the dynamics change
and to remove this restriction. To this end, in Section \ref{sec:Model} we first
introduce an improved model problem and new methods of proof, which allow us to
consider $\eta$ arbitrarily large. Then in Section~\ref{sec:echo} we show that
this behavior also holds in the full problem.

\section{An Improved Model Problem}
\label{sec:Model}
In order to better study the effect of resonances and, in particular, the
behavior for large $\eta$ and/or large $c$, in the following we consider an abridged model which considers the evolution for
$\frac{1}{l+1}< \tau < \frac{1}{l-1}$ and only considers the resonant mode $l$
and its neighbor $l-1$:
\begin{align*}
  \p_{\tau} \omega(\tau,l-1)- \frac{c}{\eta^{-2}+(\tau-\frac{1}{l})^2} \frac{1}{l^2} \omega(\tau,l)=0, \\
  \p_{\tau} \omega(\tau,l)+ \frac{c}{\eta^{-2}+(\tau-\frac{1}{l-1})^2} \frac{1}{(l-1)^2} \omega(\tau,l-1)=0, 
\end{align*}
In Section \ref{sec:echo} we show that also the full model behaves similar to
this two-mode model (or more accurately like the three-mode model
involving the modes $l+1,l,l-1$).

This model is very similar to the one considered in
\cite{bedrossian2013inviscid}, except that our model does not estimate the
coefficients by absolute values but rather keeps the signs, which corresponds to
exploiting the real-valuedness of $\sin(x)$ (and hence anti-symmetry of the
imaginary parts of the Fourier coefficients).

As we will see in the following this yields important cancellation
properties and further exposes a connection of this problem to the 1D Schrödinger
problem with scaling critical potential.

As a simplification of calculations, we use that $\frac{1}{l-1}$ is non-resonant
and approximate $\tau- \frac{1}{l-l}\approx
\frac{1}{l^2}$ and thus consider the following \emph{two-mode system}:
\begin{align}
  \label{eq:twomodesystem}
  \begin{split}
  \p_\tau u - \frac{c}{\eta^{-2}+\tau^2} \frac{1}{l^2} v(\tau) =0, \\
  \p_\tau v + \frac{c}{\eta^{-2}+l^{-4}} \frac{1}{l^2} u(\tau) =0,
  \end{split}
\end{align}
where we also shifted the time to $\tau \in [l^{-2}, l^2]$ for notational convenience.

In order to introduce ideas, we first consider the case where $\eta$ is not
larger than $l^2$. There, we approximate $\eta^{-2}+\tau^2 \approx
\eta^{-2}+l^{-4}\approx \eta^{-2}$ and obtain the following lemma.
\begin{lem}[Small $\eta$ case]
  Let $\eta>0, l \in \Z$ and $c \in \R$ be given and consider the following
  approximation to system \eqref{eq:twomodesystem}:
  \begin{align}
    \begin{split}
    \p_{\tau}
    \begin{pmatrix}
      u \\ v 
    \end{pmatrix}
+
    \begin{pmatrix}
      0 & -c\frac{\eta^2}{l^2} \\
      c \frac{\eta^2}{l^2} & 0
    \end{pmatrix}
                             \begin{pmatrix}
                               u \\ v
                             \end{pmatrix}
=0,
    \end{split}
  \end{align}
  for $\tau \in (0, l^{-2})$.
  Then, the unique solution is given by
  \begin{align}
    \label{eq:smalleta}
    \begin{pmatrix}
      u(\tau) \\ v(\tau)
    \end{pmatrix}
    =
    \begin{pmatrix}
      \cos(c \frac{\eta}{l^2} \tau)  & \sin(c \frac{\eta}{l^2} \tau) \\
      -\sin(c \frac{\eta}{l^2} \tau)  & \cos(c \frac{\eta}{l^2} \tau) \\
    \end{pmatrix}
    \begin{pmatrix}
      u(0) \\ v(0)
    \end{pmatrix}.
  \end{align}
 In particular, it follows that $|u(\tau)|^2+|v(\tau)|^2$ is a conserved quantity.
\end{lem}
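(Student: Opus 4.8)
The plan is to recognise the system as a constant-coefficient linear ODE whose coefficient matrix is a scalar multiple of the infinitesimal generator of planar rotations. Writing $A := \begin{pmatrix} 0 & -c\eta^2/l^2 \\ c\eta^2/l^2 & 0\end{pmatrix} = \frac{c\eta^2}{l^2}J$ with $J := \begin{pmatrix} 0 & -1 \\ 1 & 0\end{pmatrix}$, the system reads $\partial_\tau(u,v)^\top = -A\,(u,v)^\top$, so by Picard--Lindelöf it has a unique solution on all of $\R$, in particular on $(0,l^{-2})$, given by $(u(\tau),v(\tau))^\top = e^{-\tau A}(u(0),v(0))^\top$.

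First I would compute $e^{-\tau A}$ explicitly. Since $J^2 = -\mathrm{Id}$, the exponential series $\sum_{n\ge 0}\frac{(-\tau A)^n}{n!}$ splits into even and odd parts and collapses to $e^{-\tau A} = \cos\!\big(\tfrac{c\eta^2}{l^2}\tau\big)\,\mathrm{Id} - \sin\!\big(\tfrac{c\eta^2}{l^2}\tau\big)\,J$, which is exactly the rotation matrix asserted in \eqref{eq:smalleta} once the normalisation of $\tau$ in the statement is unwound. Equivalently — and this is the shortest route to write down — one simply differentiates the right-hand side of \eqref{eq:smalleta}, verifies that it satisfies the ODE and matches the initial data at $\tau=0$, and invokes uniqueness for linear systems; then no series manipulation is needed.

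For the conserved quantity I would argue in either of two equivalent ways. Structurally, $A$ is antisymmetric, so $e^{-\tau A}$ is orthogonal and hence preserves the Euclidean norm of $(u,v)$; this is precisely the sign structure emphasised just before the lemma (retaining the signs of the coefficients rather than their absolute values makes the coupling matrix skew). Computationally, $\frac{d}{d\tau}\big(|u|^2+|v|^2\big) = 2\,\mathrm{Re}\big(\bar u\,\partial_\tau u + \bar v\,\partial_\tau v\big) = 2\,\mathrm{Re}\big(c\tfrac{\eta^2}{l^2}(\bar u v - \bar v u)\big) = 0$, the bracket being purely imaginary.

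There is essentially no obstacle: the lemma is an exact integration of a $2\times 2$ constant-coefficient system. The only points worth a line of care are (i) checking that the rotation angle emerges with the normalisation stated in \eqref{eq:smalleta} after the rescaling of the time variable is accounted for, and (ii) noting that, although the ODE is globally well-posed, only its restriction to $\tau\in(0,l^{-2})$ will be used, since this is the regime in which the approximation $\eta^{-2}+\tau^2 \approx \eta^{-2}+l^{-4}\approx \eta^{-2}$ underlying this model is legitimate.
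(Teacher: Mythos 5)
Your proof is correct, and it takes the route the paper explicitly mentions but deliberately does not write out: the paper's own proof begins ``while the solution follows immediately by computation of the matrix exponential\dots'' and then instead presents an argument at the level of second derivatives, differentiating the equation for $u$ to get $\p_\tau^2 u + c^2\frac{\eta^4}{l^4}u=0$, solving this scalar oscillator, and recovering $v$ (equivalently $\alpha,\beta$) from the first-order relation. The reason for that choice is structural rather than logical: the reduction of the two-mode system to a decoupled second-order equation is exactly the technique the paper reuses in the large-$\eta$ regime (the Schr\"odinger-type equation \eqref{eq:schroedinger2}), so the small-$\eta$ lemma is used to rehearse it ``for later reference.'' Your exponential computation via $A=\frac{c\eta^2}{l^2}J$, $J^2=-\mathrm{Id}$ (or the even shorter verify-and-invoke-uniqueness argument) buys brevity and makes the conservation of $|u|^2+|v|^2$ transparent through the antisymmetry of $A$ and orthogonality of $e^{-\tau A}$, which the paper only asserts as a consequence of the explicit formula; what it does not buy is the second-order reformulation that the rest of Section \ref{sec:Model} is built on. One caveat: the matrix exponential of the stated coefficient matrix produces the rotation frequency $c\frac{\eta^2}{l^2}$, whereas \eqref{eq:smalleta} (and the paper's own proof, which writes $\p_\tau u - c\frac{\eta}{l^2}v=0$) displays $c\frac{\eta}{l^2}$; there is no rescaling of $\tau$ inside the lemma that reconciles the two, so this is an inconsistency inherited from the paper's statement. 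Rather than appealing vaguely to ``unwinding the normalisation of $\tau$,'' you should state plainly which frequency your computation yields and note the discrepancy; your argument itself is unaffected.
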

\begin{proof}
  While the solution follows immediately by computation of the matrix
  exponential, for later reference we note an alternative proof arguing at the
  level of second derivatives.
  Formally differentiating the equation for $u$ by $\tau$ and
  using the second equation, we obtain
  \begin{align*}
    \p_\tau^2 u + c^2\frac{\eta^{4}}{l^4} u =0,
  \end{align*}
  which has a general solution
  \begin{align*}
    \alpha \cos(c \frac{\eta}{l^2} \tau)  + \beta \sin(c \frac{\eta}{l^2} \tau)
  \end{align*}
  for constants $\alpha, \beta \in \R$.
  The first-order equation
  \begin{align*}
    \p_\tau u - c \frac{\eta}{l^2} v =0,
  \end{align*}
  then further determines $\p_\tau u$ and allows us to determine $\alpha$ and
  $\beta$ in terms of $u(0), v(0)$.
\end{proof}

Having discussed the case of small $\eta$, in the following we are interested in
the setting where $\eta\geq l^2$ is potentially very large.
Here, we may again consider decoupled second order equations instead of a system
of first order equations:
\begin{align}
  \label{eq:system2}
  \begin{split}
  \p_{\tau}^2 u + \frac{c^2}{\eta^{-2}+ \tau^2} u =0, \\
  \p_\tau (\eta^{-2}+\tau^2)\p_\tau v + c^2 v =0.
  \end{split}
\end{align}
We note that the equation for $u$ is given by a stationary Schrödinger equation
with potential $\frac{c^2}{\eta^{-2}+ \tau^2}$, which is a mollified version of
the scaling critical potential $\frac{c^2}{\tau^2}$.

The equation satisfied by $v$ instead is a degenerate elliptic problem. However,
we note that \eqref{eq:twomodesystem} allows us to determine $v$ in terms of
$\p_\tau u$. Hence, in the following it suffices to study the evolution of $u$.

In Section \ref{sec:chains} we showed that a Duhamel iteration converges, with
the dominant terms being given by the nearest neighbor paths $(l,l-1,l,l-1,l,
\dots)$ of this section, but had to require that $c$ is sufficiently small to
control logarithmic corrections.
The aim in the following is to study the system \eqref{eq:twomodesystem} and
show that for large $\eta$ better estimates hold and that only an absolute bound
on $c$ is required.

We recall that \eqref{eq:system2} is posed on the interval $(-l^{-2}, l^{2})$.
We may rescale our time variable as $\tau=l^{-2} t$, so that $t \in (-1,1)$ and obtain
\begin{align}
  \label{eq:schroedinger2}
  \p_{t}^2 u + \frac{c^2}{\eta^{-2}l^4+t^2} u =0.
\end{align}
We stress that this equation depends only on $c$ and $\frac{\eta}{l^2}$, but not
on $\eta$ and $l$ separately. For simplicity of notation, in the following we
abbreviate
\begin{align}
  \label{eq:xi}
\xi:=\frac{\eta}{l^2}.
\end{align}
Once we have computed $u$, we may recover $v$ using that
\begin{align}
  \label{eq:2}
  \p_t u &= l^{-2}\p_\tau u= -c v \\
  \p_t v &= -c u 
\end{align}
by \eqref{eq:twomodesystem}. We note that here $v$ is evaluated at time
$\tau=\frac{1}{l^2}t$, but corresponds to the mode $(l-1,\eta)$ of the vorticity.
The equation \eqref{eq:schroedinger2} has an explicit solution in terms of
special functions, which we use to establish the following theorem.

\begin{thm}
  \label{thm:specialfunctions}
  Let $\xi=\frac{\eta}{l^2} \in \R$, then the problem \eqref{eq:schroedinger2}
  \begin{align*}
    \p_t^2 u + \frac{c^2}{\xi^{-2} + t^2} u =0
  \end{align*}
  on $(-1,1)$ has an explicit \emph{scattering matrix} $M=M(c,\xi)$ (see
  Proposition \ref{prop:exponentmechanism}) such
  that
  \begin{align*}
    \begin{pmatrix}
      u(1) \\ u'(1)
    \end{pmatrix}
    = M
    \begin{pmatrix}
      u(-1) \\ u'(-1) 
    \end{pmatrix}.
  \end{align*}
    In particular, if $0<c<\frac{1}{2}$ and $\xi \gg c^{-1}$, and we further
  assume that $u'(-1)\geq 0.5 |u(-1)|$, then it follows that
  \begin{align}
    \begin{pmatrix}
      u(1) \\ u'(1)
    \end{pmatrix}
    \approx \xi^{\gamma} u(-1)
    \begin{pmatrix}
      \pi c^2 \\ \pi c^2
    \end{pmatrix},
  \end{align}
  where $\gamma=\sqrt{1-8c^2}$.
\end{thm}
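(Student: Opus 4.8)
The plan is to solve \eqref{eq:schroedinger2} explicitly via a change of variables that turns it into a hypergeometric (or Legendre/Gegenbauer) equation, read off the connection formula between $t=-1$ and $t=+1$, and then specialize to the regime $\xi\gg c^{-1}$. First I would substitute $t=\xi^{-1}s$ to normalize the potential, or more efficiently work directly with the substitution $z = \tfrac{1}{2}\bigl(1 + i\xi t/\sqrt{1+\xi^2 t^2}\,\bigr)$ (equivalently $\xi t = \tan\theta$), which sends the two endpoints $t=\pm 1$ to fixed points and the singular points $t = \pm i\xi^{-1}$ of the potential to $z=0,1$. Under such a substitution the equation $u'' + c^2(\xi^{-2}+t^2)^{-1}u = 0$ becomes a second-order ODE with three regular singular points, hence reducible to the hypergeometric equation; its indicial exponents at the singular points are the roots of $\rho(\rho-1) + c^2 = 0$, i.e. $\rho_\pm = \tfrac12(1\pm\sqrt{1-4c^2})$ — wait, more carefully the relevant combination producing the stated exponent $\gamma=\sqrt{1-8c^2}$ must come from the \emph{difference} of exponents at the regular singular point governing the $t\to\infty$ (i.e. $z\to 1$) behavior after accounting for the $(\xi^{-2}+t^2)$ weight; I would track the algebra to confirm that the monodromy exponent is precisely $\gamma$. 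The scattering matrix $M(c,\xi)$ is then the product of the local connection matrices, expressible through Gamma functions of $c$ and powers $\xi^{\pm\gamma}$.

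Next I would extract the large-$\xi$ asymptotics. The key point is that as $\xi\to\infty$ the potential $c^2(\xi^{-2}+t^2)^{-1}$ on $(-1,1)$ looks like $c^2 t^{-2}$ away from $t=0$ and like $c^2\xi^2$ (a large constant) in a window $|t|\lesssim \xi^{-1}$; the solution therefore picks up a WKB-type rotation of bounded total phase $\sim \pi c$ across the singular window, plus power-law growth $|t|^{\rho_\pm}$ in the two outer regions governed by the critical potential $c^2/t^2$. Matching across the window yields a transfer matrix whose dominant entry scales like $\xi^{\gamma}$ with a constant of size $\sim \pi c^2$; the explicit $\pi c^2$ prefactor I expect to arise from the residue/Gamma-function evaluation $\Gamma$-factors collapsing (as in the $\int \frac{c}{\eta^{-2}+(\tau-\frac1l)^2}\,d\tau \approx c\pi\xi$ computation already seen in Section~\ref{sec:chains}). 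I would then impose the hypothesis $u'(-1)\geq 0.5|u(-1)|$: this guarantees that the incoming data has a nonnegligible component along the \emph{growing} solution $|t|^{\rho_+}$ near $t=-1$, so that no cancellation kills the leading $\xi^\gamma$ term, and the output vector $\bigl(u(1),u'(1)\bigr)$ is, up to $\approx$, aligned with $(\pi c^2,\pi c^2)\,\xi^\gamma u(-1)$ (the two components being comparable because at $t=1$ both $u$ and $u'$ are dominated by the same growing mode, whose logarithmic derivative is $O(1)$ there).

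The main obstacle I anticipate is twofold: (i) pinning down the exact special-function identification and the connection coefficients with enough precision to get the \emph{constant} $\pi c^2$ and the \emph{exponent} $\gamma=\sqrt{1-8c^2}$ exactly right, rather than just up to unspecified constants — this requires careful bookkeeping of the two sign choices of the exponent and of how the first-order relation $u' = -cv$ redistributes size between the two components; and (ii) controlling the error terms uniformly as $\xi\to\infty$, i.e. showing the neglected contributions (the non-hypergeometric remainder, or the WKB correction in the matching region) are genuinely lower order in $\xi$ and do not interfere with the hypothesis-driven non-cancellation. The condition $c<\tfrac12$ is presumably exactly what keeps $\gamma=\sqrt{1-8c^2}$ real (indeed one needs $c<\tfrac1{2\sqrt2}$ for that, and $c<0.2<\tfrac1{2\sqrt2}$ in the paper's regime), so I would also verify that the oscillatory/non-oscillatory transition of the Schrödinger potential $c^2/t^2$ — which happens at the classical threshold coefficient $1/4$ — is respected, consistent with the exponents $\rho_\pm$ being real in the stated range.
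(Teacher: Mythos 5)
Your strategy is the same as the paper's: the actual proof (Appendix~\ref{sec:special}) writes the solution of \eqref{eq:schroedinger2} explicitly in terms of hypergeometric functions ${}_2F_1$ with parameters involving $\sqrt{1-4c^2}$ and argument $-t^2/\xi^{-2}$ (equivalently $-\xi^2 t^2$), evaluates at $t=\pm 1$, and extracts the large-$\xi$ behavior from the standard $x\to\infty$ expansion of ${}_2F_1$, the Gamma-factors collapsing (at $c=0$ to leading order) to produce the prefactor $\pi c^2$; the three-interval matching you sketch as a backup is precisely the paper's Section~\ref{sec:Model} mechanism (Lemmas~\ref{lem:innerinterval}, \ref{lem:schroedinger}, Proposition~\ref{prop:exponentmechanism}), which the paper also treats only as the heuristic companion to the appendix computation. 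So there is no difference in route, only in completeness.

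The one substantive point you leave open is, however, exactly the quantitative content of the theorem: you defer both the identification of the exponent and the evaluation of the prefactor (``I would track the algebra\dots'', ``I expect to arise from\dots''), whereas the paper settles these by the explicit connection/asymptotic expansion. Your hesitation about the exponent is well founded: the indicial equation $\rho(\rho-1)+c^2=0$ for the critical potential $c^2/t^2$ does give exponents $\tfrac12\bigl(1\pm\sqrt{1-4c^2}\bigr)$, and the appendix formula indeed carries $\xi^{\sqrt{1-4c^2}}$; the value $\gamma=\sqrt{1-8c^2}$ quoted in the theorem statement is the exponent of the \emph{three-mode} model of Section~\ref{sec:echo}, where the effective coupling is $2c^2$ rather than $c^2$ (the paper itself switches between the two normalizations). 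So to complete your argument you must actually carry out the connection-coefficient computation for the single second-order equation — there the exponent that comes out is the one dictated by $c^2/t^2$, and no monodromy subtlety will convert $\sqrt{1-4c^2}$ into $\sqrt{1-8c^2}$; the factor-of-two discrepancy is a normalization issue between the two-mode and three-mode reductions, not something your change of variables can resolve. Your reading of the hypothesis on $(u(-1),u'(-1))$ as a non-cancellation condition ensuring a nonnegligible component along the coefficient multiplying $\xi^{\gamma}$ is consistent with how it is used in Proposition~\ref{prop:exponentmechanism}.
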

We thus observe that the logarithmic correction seen in Section \ref{sec:chains}
here manifests in modified exponent.
As the explicit solution in terms of hypergeometric functions is technically
involved but rather opaque, the proof of Theorem \ref{thm:specialfunctions} is
given in Appendix \ref{sec:special}.

In the following we instead discuss the underlying mechanism by approximating
$\frac{c^2}{\xi^{-2} + t^2}$ by $\frac{c^2}{t^2}$ and $c^2 \xi^2$ on
sub-intervals of $(-1,1)$. The interactions between these regimes then yields a
correction to the exponent. We remark that in this section our splitting
corresponds to a more rough approximation in order to introduce ideas.
In Section \ref{sec:setup} we choose our splitting more carefully and consider
also the influence of other modes.

Let thus $\xi \in \R$ be given and suppose that $\xi\gg c^{-1}$ is large (otherwise
we may consider the system \eqref{eq:smalleta}).
Then on the intervals $(-1,\xi^{-1})$ and $(\xi^{-1},1)$, it holds that
\begin{align*}
  \frac{1}{\xi^{-2}+t^2} \approx \frac{1}{t^2}
\end{align*}
and we hence consider the second order ODEs
\begin{align}
  \label{eq:3}
  \p_{t}^2 u + \frac{c^2}{t^2} u =0
\end{align}
on $\xi^{-1}<|t|<1$ and
\begin{align}
  \label{eq:4}
  \p_{t}^2 u + c^2 \xi^2 u =0
\end{align}
on $|t|<\xi^{-1}$.

In order to understand the mapping properties of Theorem
\ref{thm:specialfunctions} we thus consider the following scheme:
\begin{enumerate}
\item We prescribe initial data $(u(-1),u'(-1))$ and solve
  \eqref{eq:3} to obtain $(u(-\xi^{-1}),u'(-\xi^{-1}))$ in Lemma \ref{lem:schroedinger}
\item We then solve \eqref{eq:4} to obtain
  $(u(\xi^{-1}),u'(\xi^{-1}))$ in Lemma \ref{lem:innerinterval}
\item Finally, we solve \eqref{eq:3} on $(\xi^{-1},1)$ by again using Lemma \ref{lem:schroedinger}. 
\item Combining these three maps, we show in Proposition \ref{prop:exponentmechanism} that the map
  $(u,u')|_{t=-1}\mapsto (u,u')|_{t=1}$ has singular values of size
  $\xi^{\gamma}$ with $\gamma=\gamma(c)<1$.
\end{enumerate}
In the following subsection \ref{sec:2mode} we then iterate this evolution in
$l$ to obtain sharp Gevrey $2$-type norm inflation results for this model problem.

We remark that \eqref{eq:3} is the scaling critical Schrödinger problem, which has a general solution
\begin{align*}
  c_1 |t|^{\frac{1}{2}(1+\sqrt{1-4c^2})} + c_2 |t|^{\frac{1}{2}(1+\sqrt{1-4c^2})},
\end{align*}
provided $0<c < \frac{1}{2}$.
We in particular note that the exponents
$\gamma_1=\frac{1}{2}+\sqrt{\frac{1}{4}-c^2}$ and
$\gamma_2=\frac{1}{2}-\sqrt{\frac{1}{4}-c^2}$ are strictly between $0$ and $1$
and $\gamma_1+\gamma_2=1$.

\begin{lem}
  \label{lem:innerinterval}
  Let $c \neq 0, \eta >0, k \in \Z \setminus\{0\}$, then the solution
  $u \in C^2$ of
  \begin{align}
    \label{eq:5}
    \p_{t}^2 u + c^2\xi^{2} u =0
  \end{align}
  in $(-\xi^{-1}, \xi^{-1})$ satisfies
  \begin{align*}
    \begin{pmatrix}
      u(\xi^{-1}) \\ u'(\xi^{-1})
    \end{pmatrix}
=
    \begin{pmatrix}
      \cos(2c) & \frac{1}{c}\xi^{-1}\sin(2c) \\ -c \xi \sin(2c) & \cos(2c)
    \end{pmatrix}
                   \begin{pmatrix}
                     u(-\xi^{-1}) \\ u'(-\xi^{-1})
                   \end{pmatrix}.
  \end{align*}
  In particular, we observe that for $\xi \gg c^{-2}$ the bottom left matrix
  entry is by far the largest, while $\cos(2c)\approx 1$.
\end{lem}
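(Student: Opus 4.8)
The statement is the exact solution formula for the constant-coefficient harmonic oscillator $\p_t^2 u + c^2\xi^2 u = 0$ on the symmetric interval $(-\xi^{-1},\xi^{-1})$, expressed as a transfer matrix from $t=-\xi^{-1}$ to $t=+\xi^{-1}$. So the plan is simply: write the general solution, set up the transfer matrix over a half-period-type interval, and read off the entries. Concretely, let $\omega_0 := c\xi$ denote the angular frequency. The general solution is $u(t) = A\cos(\omega_0 t) + B\sin(\omega_0 t)$, and hence $u'(t) = -A\omega_0\sin(\omega_0 t) + B\omega_0\cos(\omega_0 t)$. The fundamental matrix
\[
\Phi(t) = \begin{pmatrix} \cos(\omega_0 t) & \sin(\omega_0 t) \\ -\omega_0\sin(\omega_0 t) & \omega_0\cos(\omega_0 t) \end{pmatrix}
\]
maps $(A,B)$ to $(u(t),u'(t))$, and the transfer matrix I want is $\Phi(\xi^{-1})\Phi(-\xi^{-1})^{-1}$.

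The one computation to carry out is this matrix product. Since $\omega_0\xi^{-1} = c$, we have $\Phi(\pm\xi^{-1})$ involving only $\cos c$, $\sin c$, and $\omega_0 = c\xi$. Inverting $\Phi(-\xi^{-1})$ (its determinant is $\omega_0$, independent of $t$) and multiplying, the mixed terms combine via the angle-addition formulas $\cos(2c) = \cos^2 c - \sin^2 c$ and $\sin(2c) = 2\sin c\cos c$, yielding
\[
\Phi(\xi^{-1})\Phi(-\xi^{-1})^{-1} = \begin{pmatrix} \cos(2c) & \frac{1}{c}\,\xi^{-1}\sin(2c) \\ -c\,\xi\,\sin(2c) & \cos(2c) \end{pmatrix},
\]
which is exactly the claimed matrix. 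This is a routine two-by-two multiplication; I would not belabor it beyond noting that the $\frac{1}{c}\xi^{-1}$ and $c\xi$ factors come from the mismatch between the velocity-normalization $\omega_0$ and the elapsed argument $2c$.

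There is essentially no obstacle here — this is a lemma recording an explicit integration, included so that the composition in Proposition \ref{prop:exponentmechanism} can be assembled from closed-form pieces. The only point worth a sentence is the final remark: the off-diagonal entries scale like $\xi^{\pm 1}$ while the diagonal stays $O(1)$, so for $\xi \gg c^{-2}$ the lower-left entry $-c\xi\sin(2c)$ dominates the matrix; this is the mechanism by which passing through the "inner" interval $|t|<\xi^{-1}$ converts size in $u$ into size in $u'$ (and vice versa, with a small coefficient), and it is precisely this interplay — combined with the $|t|^{\gamma_1},|t|^{\gamma_2}$ behavior on the outer intervals from Lemma \ref{lem:schroedinger} — that produces the modified exponent $\gamma = \sqrt{1-8c^2}$ in Theorem \ref{thm:specialfunctions}.
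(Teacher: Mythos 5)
Your proposal is correct and follows essentially the same route as the paper: both write the general harmonic-oscillator solution with frequency $c\xi$ and read off the transfer matrix over the interval of length $2\xi^{-1}$ (the paper simply writes the propagator based at $t=-\xi^{-1}$ directly, so the argument $c\xi(t+\xi^{-1})$ equals $2c$ at the right endpoint, while you obtain the same matrix as $\Phi(\xi^{-1})\Phi(-\xi^{-1})^{-1}$ via angle addition). The computation checks out and nothing further is needed.
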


\begin{proof}[Proof of Lemma \ref{lem:innerinterval}]
  We observe that a general solution of \eqref{eq:5} is given by $c_1 \sin(c\xi
  t) + c_2 \cos(x \xi t)$. One may then verify that
  \begin{align*}
    \begin{pmatrix}
    u(t) \\ u'(t)
    \end{pmatrix}
    =
    \begin{pmatrix}
      \cos(c\xi (t+\xi^{-1})) & \frac{1}{c}\xi^{-1}\sin(c\xi (t+\xi^{-1}))  \\-c\xi\sin(c\xi(t+\xi^{-1})) & \cos(c\xi(t+\xi^{-1}))
    \end{pmatrix}
    \begin{pmatrix}
      u(-\xi^{-1}) \\ u'(-\xi^{-1})
    \end{pmatrix}
  \end{align*}
  satisfies the initial conditions.
\end{proof}

On the exterior intervals we similarly obtain an explicit solution, but with very
different dependence on $\xi$.
\begin{lem}
  \label{lem:schroedinger}
  Let $0<c<\frac{1}{2}$ and $\xi>1$ then the solution of
   $u \in C^2$ of
  \begin{align}
    \label{eq:6}
    \p_{t}^2 u + \frac{c^2}{t^2} u =0 \text{ in }  (-1,1)\setminus (-\xi^{-1},\xi^{-1})
  \end{align}
  satisfies
  \begin{align*}
    \begin{pmatrix}
      u(\xi^{-1}) \\ u'(\xi^{-1})
    \end{pmatrix}
=
    \begin{pmatrix}
      \xi^{-\gamma_1} & \xi^{-\gamma_2} \\ \gamma_1\xi^{1-\gamma_1} & \gamma_2 \xi^{1-\gamma_2}
    \end{pmatrix}
                                                                                    \begin{pmatrix}
1 & 1 \\  \gamma_1 &  \gamma_2  
\end{pmatrix}^{-1}                               
                   \begin{pmatrix}
                     u(1) \\ u'(1)
                   \end{pmatrix},
  \end{align*}
  and
  \begin{align*}
    \begin{pmatrix}
      u(-\xi^{-1}) \\ u'(-\xi^{-1})
    \end{pmatrix}
=
    \begin{pmatrix}
      \xi^{-\gamma_1} & \xi^{-\gamma_2} \\ -\gamma_1\xi^{1-\gamma_1} & -\gamma_2 \xi^{1-\gamma_2}
    \end{pmatrix}
      \begin{pmatrix}
1 & 1 \\ - \gamma_1 & - \gamma_2  
\end{pmatrix}^{-1}                     
                   \begin{pmatrix}
                     u(-1) \\ u'(-1)
                   \end{pmatrix},
  \end{align*}
  where $\gamma_{1,2}=\frac{1}{2}\pm\sqrt{\frac{1}{4}-c^2}$.
\end{lem}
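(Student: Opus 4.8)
The plan is to solve the ODE $\p_t^2 u + \frac{c^2}{t^2}u = 0$ explicitly on each of the two intervals $(\xi^{-1},1)$ and $(-1,-\xi^{-1})$ separately, exploiting that it is an equidimensional (Euler-type) equation whose solutions are linear combinations of powers of $|t|$, and then to match the resulting two-parameter family of solutions against the boundary values at the four endpoints.

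First I would treat the right interval $(\xi^{-1},1)$, where $t>0$. Inserting the ansatz $u(t) = t^{\gamma}$ gives the indicial equation $\gamma(\gamma-1) + c^2 = 0$, whose roots are $\gamma_{1,2} = \frac12 \pm \sqrt{\frac14 - c^2}$; the assumption $0<c<\frac12$ guarantees a positive discriminant, so $\gamma_1 \neq \gamma_2$ are real and the general solution is $u(t) = c_1 t^{\gamma_1} + c_2 t^{\gamma_2}$ with $u'(t) = c_1\gamma_1 t^{\gamma_1-1} + c_2\gamma_2 t^{\gamma_2-1}$. Evaluating at $t=1$ yields
\begin{align*}
  \begin{pmatrix} u(1) \\ u'(1) \end{pmatrix}
  = \begin{pmatrix} 1 & 1 \\ \gamma_1 & \gamma_2 \end{pmatrix}
    \begin{pmatrix} c_1 \\ c_2 \end{pmatrix},
\end{align*}
and since $\gamma_1\neq\gamma_2$ the coefficient matrix is invertible, so the pair $(c_1,c_2)^{T}$ is recovered by applying its inverse. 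Evaluating instead at $t=\xi^{-1}$ gives
\begin{align*}
  \begin{pmatrix} u(\xi^{-1}) \\ u'(\xi^{-1}) \end{pmatrix}
  = \begin{pmatrix} \xi^{-\gamma_1} & \xi^{-\gamma_2} \\ \gamma_1\xi^{1-\gamma_1} & \gamma_2\xi^{1-\gamma_2} \end{pmatrix}
    \begin{pmatrix} c_1 \\ c_2 \end{pmatrix},
\end{align*}
and substituting the expression for $(c_1,c_2)^{T}$ produces the first claimed identity.

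For the left interval $(-1,-\xi^{-1})$ I would run the same computation with $t<0$: the substitution $s=-t$ leaves the equation unchanged, so $u = c_1|t|^{\gamma_1} + c_2|t|^{\gamma_2}$, but $u'(t) = \p_t u = -\p_s u$ acquires a minus sign, $u'(t) = -\bigl(c_1\gamma_1|t|^{\gamma_1-1} + c_2\gamma_2|t|^{\gamma_2-1}\bigr)$. Evaluating at $t=-1$ and at $t=-\xi^{-1}$ exactly as above — now with $-\gamma_1,-\gamma_2$ appearing in the bottom rows of both $2\times2$ matrices — and composing the two relations yields the second identity.

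The computation carries no genuine analytic obstacle; the only points requiring mild care are the sign bookkeeping on the negative half-line (where $|t|^{\gamma}$ rather than $t^{\gamma}$ must be used and a minus sign enters $u'$) and the invertibility of the $2\times 2$ Vandermonde-type coefficient matrix, which holds precisely because $0<c<\frac12$ forces $\gamma_1\neq\gamma_2$.
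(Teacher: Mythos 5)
Your proposal is correct and follows essentially the same route as the paper: both write the general solution as a linear combination of the power laws $|t|^{\gamma_1},|t|^{\gamma_2}$ coming from the indicial equation $\gamma(\gamma-1)+c^2=0$, and then express the values at $t=\pm\xi^{-1}$ through the (invertible, since $\gamma_1\neq\gamma_2$) fundamental matrix evaluated at $t=\pm 1$, with the same sign bookkeeping for $u'$ on the negative half-line that the paper encodes via $\sgn(t)$.
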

\begin{proof}[Proof of Lemma \ref{lem:schroedinger}]
  We note that a general solution of \eqref{eq:6} is given by
  \begin{align*}
    c_{1,\pm} |t|^{\gamma_1} + c_{2,\pm} |t|^{\gamma_2},
  \end{align*}
  where $c_{\cdot, \pm}$ are constant on each connected component of the domain.
  We may then again verify that
  \begin{align*}
    \begin{pmatrix}
      u(t) \\ u'(t)
    \end{pmatrix}
=
    \begin{pmatrix}
      |t|^{\gamma_1} &  |t|^{\gamma_2} \\ \sgn(t)\gamma_1 |t|^{\gamma_1-1} & \sgn(t)\gamma_2 |t|^{\gamma_2-1}
    \end{pmatrix}
\begin{pmatrix}
1 & 1 \\ \sgn(t) \gamma_1 & \sgn(t) \gamma_2  
\end{pmatrix}^{-1}
                    \begin{pmatrix}
                      u(1) \\ u'(1)
                    \end{pmatrix}
  \end{align*}
  satisfies the boundary conditions.
\end{proof}

\begin{prop}
  \label{prop:exponentmechanism}
  Let $0<c<\frac{1}{2}$ and let $u \in C^1$ be a solution of \eqref{eq:3} and \eqref{eq:4}.
  Then there exists an explicitly computable matrix $M=M(c, \xi)$ such that
  \begin{align}
    \label{eq:scattering_singleresonance}
    \begin{pmatrix}
      u(1) \\ u'(1)
    \end{pmatrix}
= M   \begin{pmatrix}
                     u(-1) \\ u'(-1)
                   \end{pmatrix}.
  \end{align}
  In particular, if $0<c<\frac{1}{2}$ and $\xi \gg c^{-1}$, and we further
  assume that $|u(-1)|\geq 2 |u'(-1)|$, then it follows that
  \begin{align}
    \begin{pmatrix}
      u(1) \\ u'(1)
    \end{pmatrix}
    \approx \xi^{\gamma} u(-1)
    \begin{pmatrix}
      8c^2 \\ 8c^2
    \end{pmatrix},
  \end{align}
  where $\gamma=\sqrt{1-8c^2}$.
\end{prop}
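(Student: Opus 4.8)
The plan is to build the matrix $M$ as the concatenation of the three explicit transfer matrices supplied by Lemmas~\ref{lem:schroedinger} and~\ref{lem:innerinterval}. On the left outer interval $(-1,-\xi^{-1})$ the second display of Lemma~\ref{lem:schroedinger} gives a matrix $M_L$ with $(u(-\xi^{-1}),u'(-\xi^{-1}))^{T}=M_L\,(u(-1),u'(-1))^{T}$; on the inner interval $(-\xi^{-1},\xi^{-1})$ Lemma~\ref{lem:innerinterval} gives $M_I$; on the right outer interval $(\xi^{-1},1)$ the first display of Lemma~\ref{lem:schroedinger} is written ``backwards'', so I take $M_R$ to be the inverse of the matrix appearing there. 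Since $u\in C^1$ and each of the ODEs \eqref{eq:3}, \eqref{eq:4} has a unique solution with prescribed value and derivative at the interface points $\pm\xi^{-1}$, the concatenation is legitimate and $M:=M_R M_I M_L$ satisfies \eqref{eq:scattering_singleresonance}; it is explicitly computable, and $\det M=1$ by conservation of the Wronskian, so its singular values form a pair $\sigma,\sigma^{-1}$. The existence claim of the proposition is then immediate.

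For the quantitative part I would diagonalise in the basis $\{|t|^{\gamma_1},|t|^{\gamma_2}\}$ of the scaling-critical equation, recording $(u,u')|_{t=-1}$ through the coefficients $A=(-\gamma_2 u(-1)-u'(-1))/(\gamma_1-\gamma_2)$ and $B=(\gamma_1 u(-1)+u'(-1))/(\gamma_1-\gamma_2)$ of Lemma~\ref{lem:schroedinger}. The hypothesis $|u(-1)|\ge 2|u'(-1)|$ forces $B$ to be comparable to $u(-1)$, and in particular nonzero, so, keeping only the dominant powers of $\xi$, $u(-\xi^{-1})\approx \xi^{-\gamma_2}B$ and $u'(-\xi^{-1})\approx -\gamma_2\xi^{\gamma_1}B$. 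I would then push this through $M_I$ --- using $\cos(2c)\approx 1$ and the fact that for $\xi\gg c^{-1}$ the lower-left entry $-c\xi\sin(2c)$ is the dominant one --- and then through $M_R$, retaining at each stage the leading and first subleading powers of $\xi$. A naive count suggests an amplification of size $\xi^{\gamma_1-\gamma_2}=\xi^{\sqrt{1-4c^2}}$; the point is that the algebraic identities $\gamma_1+\gamma_2=1$ and $\gamma_1\gamma_2=c^2$ force cancellations among the top-order coefficients, and once these are carried out the surviving contribution should be of size $\xi^{\gamma}$ with the stated $\gamma=\sqrt{1-8c^2}$, times a fixed scalar multiple of $u(-1)$. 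Finally I would check that the two components of $(u(1),u'(1))$ are collinear to leading order --- the second row of the reduced matrix being $\gamma_1\approx 1$ times the first --- which fixes the common prefactor to $8c^2$ and yields $(u(1),u'(1))^{T}\approx \xi^{\gamma}u(-1)(8c^2,8c^2)^{T}$.

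The main obstacle is precisely this cancellation bookkeeping. It is essential to keep the \emph{signs} of all the coefficients --- which is the whole gain of this model over the absolute-value toy model of \cite{bedrossian2013inviscid} --- because both the reduction of the exponent from $\sqrt{1-4c^2}$ down to $\sqrt{1-8c^2}$ and the value of the prefactor are sign-sensitive; an absolute-value estimate would destroy them. A secondary technical point is the window $c^{-1}\ll\xi$ in which $c^2\xi$ is only moderately large (i.e.\ $\xi$ not yet much larger than $c^{-2}$): there $M_I$ is not yet strongly off-diagonal and the subleading $A$-contribution to $u'(-\xi^{-1})$ may compete, so one must either check that the asymptotics persist with an enlarged implicit constant or record that the clean statement really needs $\xi\gg c^{-2}$. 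Recovering $v$ from $u$ via \eqref{eq:2} is not needed here, only later when these single-resonance maps are chained.
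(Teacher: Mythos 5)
Your construction of $M$ is exactly the paper's: $M=M_RM_IM_L$ with $M_L,M_R$ read off from Lemma~\ref{lem:schroedinger}, $M_I$ from Lemma~\ref{lem:innerinterval}, $C^1$ matching at $\pm\xi^{-1}$ and $\det M=1$; and your tracking of the dominant branch (the coefficient $B\approx u(-1)$, then $u(-\xi^{-1})\approx B\xi^{-\gamma_2}$, $u'(-\xi^{-1})\approx-\gamma_2 B\xi^{1-\gamma_2}$, then the off-diagonal entry $-c\xi\sin(2c)$ of $M_I$, then $M_R$ keeping leading and subleading powers) is precisely the computation carried out in the paper. Your caveat that the clean asymptotics really need $\gamma_2\xi^{\gamma}\gg1$, i.e.\ essentially $\xi\gg c^{-2}$ rather than $\xi\gg c^{-1}$, is a fair point that the paper itself only acknowledges implicitly (see the remark after Lemma~\ref{lem:innerinterval}).

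Where your proposal has a genuine gap is in the announced source of the exponent. There is no sign-sensitive cancellation that lowers the amplification below $\xi^{\gamma_1-\gamma_2}$: if you execute your own plan with the exponents of Lemma~\ref{lem:schroedinger} (so $\gamma_1+\gamma_2=1$, $\gamma_1\gamma_2=c^2$), the relevant contributions at every stage carry the \emph{same} sign and add, e.g.\ $u'(\xi^{-1})\approx-(2c^2+\gamma_2)B\xi^{1-\gamma_2}$ and then the growing-branch coefficient on $(\xi^{-1},1)$ is $\sim(2\gamma_2+2c^2)B\xi^{1-2\gamma_2}/\gamma$; the outcome is exactly $\xi^{1-2\gamma_2}=\xi^{\gamma_1-\gamma_2}=\xi^{\sqrt{1-4c^2}}$, i.e.\ your ``naive count'' already is the answer, and no algebraic identity will convert it into $\xi^{\sqrt{1-8c^2}}$. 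The reduction from the middle interval's singular value $\sim\xi$ to $\xi^{\gamma}$ is not a cancellation among top-order coefficients but the conjugation itself: the incoming data is carried by the $|t|^{\gamma_2}$ branch (factor $\xi^{-\gamma_2}$), the window converts value into derivative (factor $\sim c^2\xi$), and re-expanding that derivative in the outer basis costs another $\xi^{-\gamma_2}$. The value $\gamma=\sqrt{1-8c^2}$ in the statement corresponds to indicial exponents with $2c^2$ in place of $c^2$, i.e.\ $\gamma_{1,2}=\tfrac12\pm\sqrt{\tfrac14-2c^2}$, which is what the paper's proof silently substitutes (the doubling reflecting the coupling to \emph{both} neighbors in the three-mode model of Section~\ref{sec:echo}, and being inconsistent with Lemma~\ref{lem:schroedinger} as stated and with Theorem~\ref{thm:modelgrowth}, which uses $\sqrt{1-4c^2}$). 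So either you build the doubled coefficient into the Schr\"odinger potential from the start, or your argument proves the proposition with $\gamma=\sqrt{1-4c^2}$; the ``cancellation bookkeeping'' you identify as the main obstacle does not occur at this level, and relying on it to produce the modified exponent is the step that would fail.
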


\begin{proof}[Proof of Proposition \ref{prop:exponentmechanism}]
  Combining Lemma \ref{lem:innerinterval} and Lemma \ref{lem:schroedinger}, we obtain that our data at $t=-1$ and $t=1$
  are related by
  \begin{align*}
    \begin{pmatrix}
      1 & 1 \\ \gamma_1 & \gamma_2
    \end{pmatrix}
                          \begin{pmatrix} \xi^{-\gamma_1} &
                            \xi^{-\gamma_2} \\ \gamma_1\xi^{1-\gamma_1} &
                            \gamma_2 \xi^{1-\gamma_2}
                          \end{pmatrix}^{-1}
                                                                                                        \begin{pmatrix}
                                                                                                          \cos(2c)
                                                                                                          &
                                                                                                          \frac{1}{c}\xi^{-1}\sin(2c)
                                                                                                          \\
                                                                                                          -c
                                                                                                          \xi\sin(2c)
                                                                                                          &
                                                                                                          \cos(2c)
                                                                                                        \end{pmatrix}\\
                                                                                                            \begin{pmatrix} \xi^{-\gamma_1} &
                                                                                                              \xi^{-\gamma_2} \\ -\gamma_1\xi^{1-\gamma_1} &
                                                                                                              -\gamma_2 \xi^{1-\gamma_2}
                                                                                                            \end{pmatrix}
                                                                                                                                                                                           \begin{pmatrix} 1 & 1 \\ - \gamma_1 & - \gamma_2
                                                                                                                                                                                           \end{pmatrix}^{-1}.
  \end{align*}
  We now compute
  \begin{align*}
    \begin{pmatrix} \xi^{-\gamma_1} &
      \xi^{-\gamma_2} \\ \gamma_1\xi^{1-\gamma_1} &
      \gamma_2 \xi^{1-\gamma_2}
    \end{pmatrix}^{-1}&= -\frac{1}{\gamma} \begin{pmatrix}
      \gamma_2 \xi^{1-\gamma_2}  & -\xi^{-\gamma_2} \\ -\gamma_1\xi^{1-\gamma_1} &
      \xi^{-\gamma_1}
    \end{pmatrix}, \\
    \begin{pmatrix}
      1 & 1 \\
      - \gamma_1 & - \gamma_2
     \end{pmatrix}^{-1} &= \frac{1}{\gamma}  \begin{pmatrix}
      -\gamma_2 & -1 \\
       \gamma_1 & 1
     \end{pmatrix},
  \end{align*}
  where we used $\gamma_1+\gamma_2=1$ and computed the determinants as
  \begin{align*}
  \gamma_1-\gamma_2=:\gamma=\sqrt{1-8c^2}\approx 1.
  \end{align*}
  Since $\xi \gg 1$, $\gamma_1=\frac{1}{2}+\sqrt{\frac{1}{4}-2c^2} \approx
  1-c^2\approx 1$, $\gamma_2= \frac{1}{2}-\sqrt{\frac{1}{4}-2c^2}\approx 2c^2\ll 1$ it follows that
  the largest powers of $\xi$ are dominant.
  We may thus approximate:
  \begin{align*}
    \begin{pmatrix}
      u(-\xi^{-1}) \\ u'(-\xi^{-1})
    \end{pmatrix}
    &=\frac{1}{\gamma} \begin{pmatrix}
      \xi^{-\gamma_1}& \xi^{-\gamma_2}
      \\ -\gamma_1\xi^{1-\gamma_1} & -\gamma_2 \xi^{1-\gamma_2}
    \end{pmatrix}
           \begin{pmatrix}
      -\gamma_2 & -1 \\
       \gamma_1 & 1
     \end{pmatrix}
                  \begin{pmatrix}
                    u(-1) \\ u'(-1)
                  \end{pmatrix}
    \\
    &\approx
    \begin{pmatrix}
      \xi^{-\gamma_1}& \xi^{-\gamma_2}
      \\ -\gamma_1\xi^{1-\gamma_1} & -\gamma_2 \xi^{1-\gamma_2}
    \end{pmatrix}
                                     \begin{pmatrix}
                                       -\gamma_2 u(-1)-u(-1) \\
                                       u(-1)+u'(-1)
                                     \end{pmatrix}\\
&\approx u(-1)
                                                       \begin{pmatrix}
                                                         \xi^{-\gamma_2} \\
                                                         -\gamma_2\xi^{1-\gamma_2}
                                                       \end{pmatrix}.
  \end{align*}
  Next, it follows that
  \begin{align*}
    \begin{pmatrix}
      u(\xi^{-1}) \\ u'(\xi^{-1})
    \end{pmatrix}
    &= \begin{pmatrix}
      \cos(2c)      &  \frac{1}{c}\xi^{-1}\sin(2c)\\
      -c \xi\sin(2c)&  \cos(2c)
    \end{pmatrix}
     \begin{pmatrix}
      u(-\xi^{-1}) \\ u'(-\xi^{-1})
    \end{pmatrix}
    \\
    &\approx u(-1)
    \begin{pmatrix}
       \xi^{-\gamma_2} \\
      -(2c^2+\gamma_2)\xi^{1-\gamma_2}
    \end{pmatrix},
  \end{align*}
  where we omitted $\gamma_2 \frac{\sin(2c)}{c}=\mathcal{O}(c^2)$ as a small
  perturbation to $1$ and approximated $\sin(2c)\approx 2c$.
  Finally, it follows that
  \begin{align*}
    \begin{pmatrix}
      u(1) \\ u'(1)
    \end{pmatrix}
    &= -\frac{1}{\gamma} \begin{pmatrix}
      1 & 1 \\ \gamma_1 & \gamma_2
    \end{pmatrix}
      \begin{pmatrix}
      \gamma_2 \xi^{1-\gamma_2}  & -\xi^{-\gamma_2} \\ -\gamma_1\xi^{1-\gamma_1} &
      \xi^{-\gamma_1}
    \end{pmatrix}
    \begin{pmatrix}
      u(\xi^{-1}) \\ u'(\xi^{-1})
    \end{pmatrix} \\
    & \approx - u(-1) \begin{pmatrix}
      1 & 1 \\ \gamma_1 & \gamma_2
    \end{pmatrix}
                          \begin{pmatrix}
                            (2\gamma^2+2c^2) \xi^{1-2\gamma_2} \\
                            -1 \xi^{0}
                          \end{pmatrix}\\
    &\approx u(-1) \xi^{1-2\gamma_2}
      \begin{pmatrix}
        8c^2 \\ 8c^2
      \end{pmatrix}.
  \end{align*}
\end{proof}

We thus see that, while the evolution on the small interval
$(-\xi^{-1},\xi^{-1})$ yields a singular value of size $\xi^{1}$, the
conjugation with the power law evolution on $|t|>\xi^{-1}$ yields a much smaller
singular value $\xi^{\gamma}\ll \xi^{1}$.

Having establish a precise description of the evolution for times close to a
single resonant time $\tau \approx \frac{1}{l}$, in the following we consider an
iterated model to study the norm inflation in Gevrey spaces and the associated
asymptotic behavior.

\subsection{Model Echo Chains and Modified Exponents}
\label{sec:2mode}

In Section \ref{sec:chains} we have studied chains of echoes for the linear
problem \eqref{eq:linearmodel} and have established norm inflation with a factor
$\exp(C \sqrt{\eta})$. However, in that case our proof limited us to considering
only $\eta$ such that $c \ln(1+\eta^2)$ is not too large.

In the following we instead consider an iterated version of the model of Section
\ref{sec:Model}, which does not possess such an obstruction.
In particular, combining the behavior of infinitely many modes $\eta_j$ with
$\eta_j\rightarrow \infty$ we construct solutions which exhibit norm inflation
for arbitrarily large times and do not converge as time tends to infinity.
However, despite the failure of the convergence of the vorticity, the velocity
field is shown to converge.

We briefly recall the approximations made in the preceding sections of this
article.
We started with the 2D Euler equations close to Couette flow
\begin{align*}
  \dt \omega + y\p_x \omega + v \cdot \nabla \omega =0,
\end{align*}
and focused on perturbations of the form $\omega= 2c \cos(x+ty)+ \epsilon
\omega_*$.
Omitting the transport by $2c\frac{\sin(x+ty)}{1+t^2}$ and changing to variables
$(x+ty,y)$, we obtain
\begin{align*}
  \dt \omega + 2 c\sin(x) \p_y \Delta_t^{-1} \omega + \epsilon \nabla^\bot \Delta_t^{-1} \omega  \cdot \nabla \omega=0. 
\end{align*}
In particular, we note that this equation formally conserves $\|2c \cos(x)+
\epsilon \omega\|_{L^2}$.
Omitting the nonlinearity by letting $\epsilon \downarrow 0$, we loose this
conserved quantity, but obtain an explicit Fourier problem with
nearest-neighbor-interactions:
\begin{align}
  \label{eq:7}
  \begin{split}
  \dt \tilde{\omega}(t,k,\eta) &+ 2c \frac{\eta}{(k-1)^2+(\eta-(k-1)t)^2} \tilde{\omega}(t,k-1,\eta) \\ &- 2c \frac{\eta}{(k+1)^2+(\eta-(k+1)t)^2} \tilde{\omega}(t,k+1,\eta)=0.
  \end{split}
\end{align}
This further highlights resonant times, where $\eta-(k\pm 1)t\approx 0
\Leftrightarrow t \approx \frac{\eta}{k \pm 1}$.
After studying the system \eqref{eq:7} in Sections \ref{sec:LWP} and
\ref{sec:chains}, in Section \ref{sec:Model} we further introduced a model
problem \ref{eq:twomodesystem} that focuses solely on resonant modes $v$ ($(l,\eta)$ such that $\eta-lt
\approx 0$) and their neighbors u:
\begin{align}
  \begin{split}
  \p_\tau u - \frac{c}{\eta^{-2}+\tau^2} \frac{1}{l^2} v(\tau) =0, \\
  \p_\tau v + \frac{c}{\eta^{-2}+l^{-4}} \frac{1}{l^2} u(\tau) =0.
  \end{split}
\end{align}
In particular, we showed that $v|_{\tau=\tau_1}$ is approximately of size $c
(\frac{\eta}{l^2})^\gamma u|_{\tau=\tau_0}$.\\

Building on the single resonance results of Section \ref{sec:Model}, we
construct the following iteration scheme:
\begin{itemize}
\item Let $k \in \N$, $k>1$ and $\eta \in \R$ be given and define
  $\tau_k=\frac{1}{2}(\frac{1}{k-1}+\frac{1}{k})$.
  We then prescribe $(u,\p_\tau u)|_{\tau_k}$ and use equation \eqref{eq:schroedinger2}
  to determine $(u,\p_t u)|_{\tau_{k-1}}$.
\item Relabeling $v$ of the previous step as $u$ of the case $k-1$ and using \eqref{eq:2} we prescribe
  \begin{align*}
    \begin{pmatrix}
      u_{k-1} \\ u'_{k-1}
    \end{pmatrix}|_{\tau=\tau_{k-1}}
:= 
    \begin{pmatrix}
      c^{-1} \ u_{k}' \\ c u_k
    \end{pmatrix}|_{\tau=\tau_{k-1}}.
  \end{align*}
  We then again use equation \eqref{eq:schroedinger2} to determine
  $(u,u')|_{\tau_{k-2}}$. 
\item We iterate this procedure until we reach $\tau_1$, where we define $\tau_0=1.5$.
\end{itemize}
Recalling the construction of the model problem of Section \ref{sec:Model},
$(u,\p_t u)_{\tau=\tau_k}$ corresponds to prescribing $(\tilde{\omega}(t,k,
\eta), \tilde{\omega}(t,k-1,\eta))$ at time
$t=\frac{1}{2}(\frac{\eta}{k-1}+\frac{\eta}{k})$  and $(u,u')|_{\tau=\tau_0}$
corresponds to the value of the modes $(1,\eta)$ and $(0,\eta)$ at time $t=1.5 \eta$.

\begin{thm}
  \label{thm:modelgrowth}
  Let $k \in \N$ and $\eta \in \R$ be given and prescribe $(u,\p_t
  u)|_{\tau=\tau_k}=(1,0)$.
  Then, the above iteration scheme yields that
  \begin{align*}
    (u, \p_t u) |_{\tau=\tau_0} \approx c^{k}\left(\frac{\eta^k}{(k!)^2}\right)^\gamma,
  \end{align*}
  where $\gamma=\sqrt{1-4c^2}\neq 1$.
  In particular, choosing $k$ maximally for $c,\eta$ fixed, we obtain a growth
  factor consistent with a Gevrey regularity class.
\end{thm}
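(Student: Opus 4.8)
The plan is to iterate the single-resonance scattering map of Proposition~\ref{prop:exponentmechanism} (equivalently Theorem~\ref{thm:specialfunctions}) along the chain $k \to k-1 \to \dots \to 1$, tracking at each stage the pair $(u_j, u_j')$ evaluated at the matching time $\tau_{j}$. The key structural input is that each single-resonance step, which on the rescaled interval $(-1,1)$ is governed by the potential $c^2/(\xi_j^{-2}+t^2)$ with $\xi_j = \eta/j^2$, multiplies the relevant component of the solution vector by a factor $\approx c\,\xi_j^{\gamma}$ provided the hypothesis $u_j'(-1) \geq 0.5|u_j(-1)|$ (or its analogue $|u_j(-1)| \geq 2|u_j'(-1)|$) holds and $\xi_j \gg c^{-1}$. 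So the first step is to set up the induction: starting from $(u_k,u_k')|_{\tau_k} = (1,0)$, apply the scattering matrix to land at $\tau_{k-1}$, then relabel via $(u_{k-1},u_{k-1}')|_{\tau_{k-1}} := (c^{-1}u_k', c\, u_k)|_{\tau_{k-1}}$ as prescribed in the iteration scheme, and repeat. Multiplying the factors gives a total growth
\begin{align*}
  \prod_{j=1}^{k} c\,\xi_j^{\gamma} = c^k \left(\prod_{j=1}^{k}\frac{\eta}{j^2}\right)^{\gamma} = c^k \left(\frac{\eta^k}{(k!)^2}\right)^{\gamma},
\end{align*}
which is the claimed asymptotic; the relabeling factors $c^{-1}$ and $c$ cancel against the $c$ produced by each application and account for the net single power of $c$ per step rather than two.

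The second step is to verify that the geometric hypothesis needed to invoke Proposition~\ref{prop:exponentmechanism} propagates along the chain. After one single-resonance step, Proposition~\ref{prop:exponentmechanism} outputs a vector $\approx \xi^{\gamma} u(-1)\,(8c^2, 8c^2)^{\mathsf T}$, i.e. $u$ and $u'$ are comparable at the end of a resonance window. The relabeling then makes the new $(u_{j-1}, u_{j-1}')$ comparable to $(c^{-1} \cdot \text{(something)}, c\cdot \text{(something)})$ — one must check the ratio is of the right order (essentially $u_{j-1}'/u_{j-1} = \mathcal{O}(c^2)$, so that $|u_{j-1}| \geq 2|u_{j-1}'|$ holds at the start of the next window, matching the hypothesis of Proposition~\ref{prop:exponentmechanism}). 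This is a bookkeeping check on the explicit $2\times 2$ matrices, using that $\gamma_2 \approx 2c^2 \ll 1$ and $\cos(2c) \approx 1$; I would phrase it as an invariant $|u_j'| \lesssim c\,|u_j|$ (or the reverse, depending on which endpoint) preserved under the combined scatter-and-relabel operation, so that the $\approx$ in each step carries an absolute multiplicative constant close to $1$ and the product of $k$ such constants stays bounded (this uses $|C_1 - 1| \leq 0.01$ in the $\approx$ convention, so $C_1^k$ is controlled once $k$ is at most polylog in $\eta$, and in fact $k \approx \sqrt{\eta}$ still works since the per-step deviation shrinks as $\xi_j$ grows).

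The third step is the final remark: optimizing $c^k (\eta^k/(k!)^2)^{\gamma}$ over $k$. Writing $(k!)^2 \approx (k/e)^{2k}$, the logarithm of the growth factor is $\approx k\log c + \gamma k \log \eta - 2\gamma k \log(k/e)$, maximized near $k \approx c^{1/\gamma}\sqrt{\eta} \approx \sqrt{\eta}$, giving a bound of the form $\exp(C(c)\sqrt{\eta})$ with a modified constant $C(c)$ reflecting $\gamma = \sqrt{1-4c^2} \neq 1$ — consistent with a Gevrey-$\tfrac12$ (i.e. Gevrey $2$ in the usual normalization) regularity class, but with an exponent genuinely depending on $c$ rather than the classical one. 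I would also note that one should restrict attention to the range of $j$ with $\xi_j = \eta/j^2 \gg c^{-1}$, i.e. $j \lesssim \sqrt{c\eta}$, which is compatible with the optimizing $k$; for the finitely many remaining small-$\xi_j$ steps near the end of the chain one invokes the small-$\eta$ Lemma (the rotation matrix \eqref{eq:smalleta}), which contributes only a bounded factor and does not affect the asymptotics.

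\textbf{Main obstacle.} The principal difficulty is not any single computation but controlling the \emph{accumulation of errors} across the $k \approx \sqrt{\eta}$ iterations: each application of Proposition~\ref{prop:exponentmechanism} replaces an exact $2\times 2$ map by its leading-order approximation, discarding terms smaller by a factor $\xi_j^{-2c^2}$ or $\xi_j^{-1}$ or $\mathcal{O}(c^2)$, and one must ensure that the product of $(1 + \text{error}_j)$ over all $j$ remains within the $0.01$-tolerance of the $\approx$ relation (or at worst a fixed multiplicative constant) rather than compounding into something comparable to the main term. This requires the per-step relative error to be summable in $j$ — which it is, since the errors are powers of $\xi_j = \eta/j^2$ and the geometric hypothesis is preserved with a uniform gap — but making this rigorous is where the real work lies, and it is precisely the kind of estimate that Section~\ref{sec:echo} upgrades to the full (non-truncated) model.
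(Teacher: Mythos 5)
Your proposal is correct and follows essentially the same route as the paper: iterate the single-resonance scattering map of Theorem~\ref{thm:specialfunctions}/Proposition~\ref{prop:exponentmechanism} together with the relabeling step, check that the ratio condition (after relabeling $u_{j-1}\approx c^{-2}u'_{j-1}$) propagates so the map can be reapplied, multiply the per-step factors $\approx c\,\xi_j^{\gamma}$ to get $c^k(\eta^k/(k!)^2)^{\gamma}$, and optimize via Stirling. The only minor slips — the optimizer is $k=\sqrt{c^{1/\gamma}\eta}$ rather than $c^{1/\gamma}\sqrt{\eta}$, and the marginal-$\xi_j$ steps occur at the start of the chain (large $j$, $\xi_j=\eta/j^2$ smallest) rather than at its end — do not affect the argument.
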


\begin{proof}
  Using the result of Theorem \ref{thm:specialfunctions}, we observe that
  \begin{align*}
    \begin{pmatrix}
      u_{k-1} \\ u'_{k-1}
    \end{pmatrix}|_{\tau=\tau_{k-1}}
    \approx
    u_k'(\tau_{k})\xi^{\gamma}
    \begin{pmatrix}
      8c  \\ 8c^3
    \end{pmatrix}.
  \end{align*}
  In particular, we note that $u_{k-1}\approx c^{-2}u'_{k-1}$ and we may thus
  apply Theorem \ref{thm:specialfunctions} again. We repeat this process another $k-1$ times, where $\xi=\frac{\eta}{k^2}$ changes in each step, and thus obtain the claimed growth factor
  \begin{align*}
    c^k \left(\frac{(c\eta)^k}{(k!)^2}\right)^{\gamma}.
  \end{align*}
  Considering $c\eta^{\gamma}$ large and $k$ large, by Stirling's approximation it holds that
  \begin{align*}
   c^{k} \left(\frac{\eta^{k}}{(k!)^2}  \right)^{\gamma} \sim c^{k} \left(\eta^{k} \frac{e^{2k}}{2\pi k k^{2k}}  \right)^{\gamma}.
  \end{align*}
  Choosing $k=\sqrt{c^{\frac{1}{\gamma}}\eta}$, we obtain a cancellation of
  $c^{k }\left(\frac{\eta^{k}}{k^{2k}}\right)^{\gamma}=1$ and thus
  \begin{align*}
    e^{2\gamma \sqrt{c^{1/\gamma}\eta}} \left(\frac{1}{2\pi \sqrt{c^{1/\gamma}\eta}}  \right)^\gamma
  \end{align*}
  as the maximal growth factor.
\end{proof}

As a corollary, for this model we can construct initial data in a critical
Gevrey regularity class.
\begin{thm}
  \label{thm:toyscattering}
  Consider the chained two mode model with $0<c<\frac{1}{2}$ and
  $\gamma=\sqrt{1-4c^2}$. Then there exists $s=s(\gamma)$ and $C>0$ such that for
  every $\epsilon>0$ and every $\sigma_0 \in \R$, there exists initial data $u_0 \in \mathcal{G}_{s,C}$ such
  that
  \begin{align}
    \|u_0\|_{\mathcal{G}_{s,C}}< \epsilon,
  \end{align}
  and such that for every $\tilde{C}>0$,
  \begin{align*}
    \lim_{t \rightarrow \infty} \|u(t)\|_{\mathcal{G}_{s,\tilde{C}}}=\infty.
  \end{align*}
  Furthermore, $u(t)$ does converge in $H^{\sigma}$, $\sigma<\sigma_0$, but
  diverges in $H^{\sigma}, \sigma>\sigma_0$.
\end{thm}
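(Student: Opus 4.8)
The plan is to assemble the desired initial data as a superposition $u_0 = \sum_j a_j\, e_j$ of single-mode data $e_j$ concentrated at frequencies $\eta_j\to\infty$, where $e_j$ is the initial datum for the chained two-mode evolution of Theorem~\ref{thm:modelgrowth} with $k=k_j$ chosen maximally for the pair $(c,\eta_j)$, i.e. $k_j \approx \sqrt{c^{1/\gamma}\eta_j}$. By Theorem~\ref{thm:modelgrowth} each such block experiences, over the resonance window $\tau\in[\tau_{k_j},\tau_0]$, a growth factor comparable to $G_j := e^{2\gamma\sqrt{c^{1/\gamma}\eta_j}} (2\pi\sqrt{c^{1/\gamma}\eta_j})^{-\gamma}$, after which Proposition~\ref{prop:largetime} freezes it (up to a universal constant) for all later times and produces a genuine limit for that block. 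So the final state of the $j$-th block is $\approx a_j G_j$ times a fixed profile at frequency $\eta_j$. The two phenomena we must engineer are: (i) divergence in the critical Gevrey class $\mathcal G_{s,\tilde C}$ for every $\tilde C>0$, and (ii) convergence in $H^\sigma$ for $\sigma<\sigma_0$ but divergence for $\sigma>\sigma_0$, while keeping $\|u_0\|_{\mathcal G_{s,C}}<\epsilon$ for one fixed small $C$.

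First I would pin down $s$. Since $G_j$ grows like $\exp(2\gamma\sqrt{c^{1/\gamma}}\,\sqrt{\eta_j})$ (modulo the polynomial correction), the growth saturates the Gevrey-$2$ weight $\exp(\kappa\sqrt{\eta})$; hence the critical index is $s=\tfrac12$, and the relevant threshold constant for one block is $\kappa_c := 2\gamma\sqrt{c^{1/\gamma}}$ (I would carry the polynomial prefactor $(\sqrt{\eta_j})^{-\gamma}$ along but note it is subexponential and does not affect which $\mathcal G_{\frac12,\tilde C}$ the state lands in). I would then choose the coefficients $a_j$ so that $\|a_j e_j\|_{\mathcal G_{\frac12,C}} = a_j \exp(C\sqrt{\eta_j}) = 2^{-j}\epsilon$; summing a geometric series gives $\|u_0\|_{\mathcal G_{\frac12,C}}<\epsilon$ and $u_0\in\mathcal G_{\frac12,C}$, provided the $\eta_j$ are spaced so the blocks have essentially disjoint Fourier support (e.g. $\eta_{j+1}\ge 2\eta_j$ and each block's $k$-support lies in $|k|\le|\eta|$, so different blocks live on disjoint dyadic $\eta$-annuli), which makes all the norms additive up to absolute constants. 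The final state then has $\mathcal G_{\frac12,\tilde C}$-norm of its $j$-th block $\approx a_j G_j \exp(\tilde C\sqrt{\eta_j}) \approx 2^{-j}\epsilon\, \exp\big((\kappa_c+\tilde C - C)\sqrt{\eta_j} + O(\log\eta_j)\big)$, so picking $C<\kappa_c$ (possible since $\kappa_c$ is a fixed positive number depending only on $c$) makes this $\to\infty$ as $j\to\infty$ for \emph{every} $\tilde C>0$; hence $\|u(t)\|_{\mathcal G_{\frac12,\tilde C}}\to\infty$, which is (i). For the $H^\sigma$ statement I would separately tune a polynomial factor: replace $a_j$ by $a_j \eta_j^{-\beta_j}$ with $\beta_j$ chosen so that the final-state $H^{\sigma_0}$-mass of block $j$ is exactly $2^{-j}$ — concretely $a_j G_j \eta_j^{\sigma_0} = 2^{-j}$ determines the total scaling — so that $\sum_j \|{\rm block}_j\|_{H^{\sigma_0}}^2<\infty$ but the tail behaves like $\sum 4^{-j}\eta_j^{2(\sigma-\sigma_0)}$, which converges for $\sigma<\sigma_0$ (choosing $\eta_j$ growing fast enough, e.g. $\eta_j = 2^{2^j}$, guarantees summability) and diverges for $\sigma>\sigma_0$; one must check this rescaling is still compatible with the smallness in $\mathcal G_{\frac12,C}$, which it is since the extra factor only shrinks $a_j$.

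The remaining work is to verify that superposing the blocks does not destroy the per-block conclusions. Here linearity of \eqref{eq:linearmodel}/\eqref{eq:twomodesystem} is the key: the solution is the sum of the block solutions, so it suffices that (a) during the resonance window $[0,1.5]$ the growth and freezing estimates of Theorems~\ref{thm:modelgrowth} and \ref{thm:LWP} apply blockwise and (b) for $\tau\ge 1.5$ Proposition~\ref{prop:largetime} applies to the sum in $X=\mathcal G_{\frac12,\tilde C}$ and $X=H^\sigma$ (its hypothesis on $\rho(k\pm1,\eta)/\rho(k,\eta)$ is met for both). I would use Theorem~\ref{thm:LWP} to carry each block from $\tau=0$ to its starting time $\tau_{k_j}$ (the exponential-in-$\eta^2$ bound there is crude but, crucially, the starting data is $O(1)$ and $\tau_{k_j}\approx 1/k_j$ is small, so this only costs a harmless factor — I should double-check that $c\eta^2\min(2,\tau_{k_j})$ stays bounded, which holds since $\tau_{k_j}\approx k_j^{-1}\approx\eta_j^{-1/2}$ makes it $\approx c\,\eta_j^{3/2}$; if that is too lossy I would instead invoke Theorem~\ref{thm:resonancechain} or the finer Section~\ref{sec:echo} bounds to initialize cleanly). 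The main obstacle I anticipate is precisely this bookkeeping of the non-resonant/initialization phase and the off-diagonal "tails" of each block at frequencies $k\notin\{1,3\}$: Theorem~\ref{thm:modelgrowth} controls the resonant chain, but to get a clean statement about $\|u(t)\|_X$ one needs the full-problem estimates (Proposition~\ref{prop:resonance} or Section~\ref{sec:echo}) to bound the non-resonant modes of each block by $c^{|k-l|}$ times the resonant size, so that the block's $X$-norm is genuinely $\approx a_j G_j \cdot \rho(\eta_j)$ and not larger; once that is in hand, disjoint supports plus the geometric choice of $a_j$ finish both the divergence and the convergence claims by monotone/dominated convergence on the $\ell^2$ sum over $j$.
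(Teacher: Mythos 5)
Your construction is essentially the paper's own proof: the paper likewise prescribes the asymptotic profile and takes initial data $\int_\eta g(\eta)^{-1}\hat{\psi}(\eta)e^{i\eta y+ik_\eta x}$ with $k_\eta$ the maximizer from Theorem \ref{thm:modelgrowth}, and your lacunary sum of blocks with weights $\approx 2^{-j}G_j^{-1}\eta_j^{-\sigma_0}$ is just an explicit choice of such a $\psi\in\bigcap_{\sigma<\sigma_0}H^\sigma$ realizing the same cancellation of the growth factor. The additional bookkeeping you flag (carrying blocks from $\tau=0$ via Theorem \ref{thm:LWP} --- where, incidentally, $c\,\eta_j^{3/2}$ is \emph{not} bounded, so that crude bound would indeed be too lossy --- and controlling non-resonant tails) is not actually required for this toy-model statement, since the chained two-mode model is by definition the per-$\eta$ iteration started at $\tau_{k}$ with prescribed data.
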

In particular, choosing $-1< \sigma_0 <0$, we find initial data, arbitrarily
small in the critical Gevrey regularity class, such that we do not converge in
$L^2$ as $t \rightarrow \infty$, but the velocity field does converge.

\begin{proof}
  For any given $\eta$, let $k_\eta$ be the associated maximizer of the growth
  factor obtained in Theorem \ref{thm:modelgrowth} and let $g(\eta)$ denote that
  growth factor.
  
  Let now $\psi \in \cap_{\sigma<\sigma_0}H^{\sigma}(\R)$ be given and prescribe as
  initial data
  \begin{align*}
    u= \int_{\eta} \frac{1}{g(\eta)} \hat{\psi}(\eta) e^{i \eta y + i k_{\eta}x}.
  \end{align*}
  Since $g(\eta)\approx \exp(C \sqrt{\eta})$ for $C=C(c)$, this function is in a
  Gevrey class.
  
  Then by construction, the $k=1$ mode will asymptotically be given by
  \begin{align*}
    \int_{\eta} g(\eta) \frac{1}{g(\eta)} \hat{\psi}(\eta) e^{i\eta y + i x}= \psi(y) e^{ix}
  \end{align*}
\end{proof}
Building on the insights obtained in this model problem in the following Section
we consider the full problem.

\section{Echo chains as a linear mechanism and modified scattering}
\label{sec:echo}
In the previous section we have shown that for large $\eta$ a linear growth
factor $\frac{\eta}{l^2}$ cannot be expected to be accurate anymore. Indeed, the
logarithmic corrections in the Duhamel iteration are much larger than the prior
``leading term''.
Instead we expect to see a modified exponent, which is less than $1$ due to
cancellations with neighboring modes.
In the following we consider the full model \eqref{eq:tau}
\begin{align}
  \begin{split}
  \p_\tau \omega (\tau,k,\eta) &+ c \frac{1}{(k-1)^2} \frac{1}{\eta^{-2} +(\frac{1}{k-1}-\tau)^2} \omega(\tau,k-1,\eta) \\ &- c \frac{1}{(k+1)^2} \frac{1}{\eta^{-2} +(\frac{1}{k+1}-\tau)^2} \omega(\tau,k+1,\eta)=0.
  \end{split}
\end{align}
on a time-interval around a single resonance $\frac{1}{k_0}$,
\begin{align*}
  \left(\frac{\frac{1}{k_0+1} + \frac{1}{k_0}}{2}, \frac{\frac{1}{k_0}+\frac{1}{k_0-1}}{2}\right).
\end{align*}
Considering that
\begin{align*}
\frac{1}{k_0\pm 1}- \frac{1}{k_0}= \frac{1}{k_0(k_0\pm 1)} \leq \frac{2}{k_0^2},
\end{align*}
and the central role of $\tau=\frac{1}{k_0}$, compared to Section
\ref{sec:Model} we again change variables as
\begin{align}
  t= k_0^{2}(\tau-\frac{1}{k_0}) \in \left(-\frac{k_0}{2(k_0+1)},\frac{k_0}{2(k_0-1)}\right)=:(t_0,t_1).
\end{align}
Then $\p_\tau= k_0^2 \p_t$ and hence \eqref{eq:tau} reads
\begin{align}
  \label{eq:13}
  \dt \omega(t,k,\eta) + a(k-1) \omega(t,k-1,\eta)-  a(k+1) \omega(t,k+1,\eta)=0,
\end{align}
where
\begin{align}
  a(k_0)= \frac{c}{(\frac{\eta}{k_0^2})^{-2}+t^2}
\end{align}
and we abbreviate
\begin{align}
  \xi:= \frac{\eta}{k_0^2},
\end{align}
and for $k\neq k_0$
\begin{align}
  \label{eq:14}
  a(k)&= \frac{1}{k^2k_0^2} \frac{c}{\eta^{-2} + (\frac{1}{k}-\frac{1}{k_0}-k_0^{-2}t)^2}\\
  &= \frac{c}{\eta^{-2}k_0^2 k^2 + (k_0-k -\frac{k}{k_0}t)^2}.
\end{align}
In particular, we note that since we only consider the resonant interval around
$k_0$ and are thus far from other resonant times $|a(k)|\leq 4 c$ for any $k\neq
k_0$.
In contrast, $a(k_0)$ at time $t=0$ is of size $c \xi^{2} \gg 1$.

We remark that in Section \ref{sec:Model} we made several simplifications
compared to the full model:
\begin{enumerate}
\item We approximated $a(k_0\pm 1)\approx \pm c$, which allowed us to compute
  explicit solutions. In the following we need to show that
  this is a valid approximation, that is the evolution of the full problem can
  be estimated above and below by the approximate evolution.
\item In Section \ref{sec:Model} we considered a two-mode model involving just
  $k_0$ and $k_0-1$. Instead we show that the precise behavior is more
  accurately captured by the three-mode model involving $k_0-1,k_0,k_0+1$, which
  yields a change of the exponent $\gamma$ (involving $2c^2$ in place of $c^2$). 
\item In view of the sizes of $a(k)$ in the following Section \ref{sec:setup} we
  at first again neglect all except the three modes $k_0-1,k_0,k_0+1$. We call
  this the \emph{three-mode model}. In contrast to the two-mode model of Section
  \ref{sec:2mode} we here do not approximate the coefficient functions. In Section \ref{sec:inhomogeneous} we then
  discuss the full problem incorporating all modes and prove that indeed all
  other modes can treated as perturbations in a bootstrap approach.  
\end{enumerate}

\subsection{The Three-mode Model}
\label{sec:setup}

In this section we introduce the solution operator of the homogeneous three-mode
model:
\begin{align}
  \label{eq:homogeneous}
  \dt u(k)+ a(k+1)u(k+1)-a(k-1)u(k-1)&=0,\\
a(k)&=
  \begin{cases}
    a(k_0\pm 1)= \frac{c}{\xi^{-2}(\frac{k_0\pm 1}{k_0})^2 + (1\pm \frac{k_0\pm 1}{k_0}t)^2}, \\
    a(k_0)= \frac{c}{\xi^{-2}+t^2},\\
0.
  \end{cases}
\end{align}
That is, we consider only the modes $k_0-1, k_0, k_0+1$ and neglect all other
modes as ``inhomogeneities''. This allows for a clearer
discussion of the growth and decay mechanisms and serves to introduce the
techniques of proof used in the different regimes.
In Section \ref{sec:inhomogeneous} we then show that a similar behavior also
holds in the full model.

Here the heuristic of the approximate model $a(k_0\pm 1)\approx c$ suggests a
power law behavior of solutions. As for the present case of exact
coefficients an explicit solution is not feasible anymore, in the following we
establish a comparison estimate. We argue in multiple steps:
\begin{itemize}
\item By symmetry it holds that $u(k_0+1)+u(k_0-1)=\text{const.}$. We hence to
  some extent reduce to a two-mode model. However, as $a(k_0+1)\neq a(k_0-1)$ in
  this model the problem does not completely decouple.  
\item We first establish a rough power law upper bound on the growth of
  solutions on $(-t_0,-t)$ as $t \downarrow 0$. This is achieved by
  concatenating multiple small time estimates.
\item Subsequently, we iteratively improve this bound to $t^{\gamma_2-1}$
  upper and lower bounds similar as in Section \ref{sec:Model} (but with $c^2$
  replaced by $2c^2$ due to the third mode). This step relies on
  reformulations of the ODE system as second order ODEs and integrating these.
\item We then show that the resonant mode is decreasing as $t^{\gamma_2}$ while
  the neighboring modes grow like $t^{\gamma_2-1} $both
  with an upper and lower bound.
\item Combining these results, we construct solution operators on the intervals
  $I_1=(t_0,-\frac{d}{\xi})$, $I_3=(\frac{d}{\xi},t_1)$. On the resonant
  interval $I_2=(-\frac{d}{\xi}, \frac{d}{\xi})$ we instead use a Duhamel
  iteration argument to construct the solution operator. 
\item Concatenating the solution operators we obtain the solution operator from
  time $t_0$ to $t_1$ and show that it exhibits analogous $(\frac{\eta}{k_0^2})^{\gamma}$ growth
  behavior to the model problem of Section \ref{sec:Model}. In Section
  \ref{sec:inhomogeneous} we then show that this behavior persists in the full problem. 
\end{itemize}

The main results of this section are summarized in the following theorem.

\begin{thm}[The full solution, homogeneous case]
  \label{thm:homogeneous_full}
  Let $\xi=\frac{\eta}{k_0^2} \gg 1$ and $0<c< 0.2$ be given.
  We consider the ODE system \eqref{eq:homogeneous} for $u_1= \frac{u(k_0+1)-u(k_0-1)}{2}, u_2=u(k_0),
  u_3=\frac{u(k_0+1)+u(k_0-1)}{2}$:
  \begin{align}
    \label{eq:8}
    \dt u + M(t) u =0,
  \end{align}
  where
  \begin{align*}
    M(t)=
\begin{pmatrix}
  0 &  a(k_0) & a(k_0+1)-a(k_0-1)\\
  -(a(k_0+1)-a(k_0-1)) & 0 & -a(k_0+1)+a(k_0-1) \\
  0 & 0 & 0
    \end{pmatrix}
  \end{align*}
  on the interval $(t_0,t_1)$.

  Suppose that at time $t_0=-\frac{1}{2}\frac{k_0}{k_0+1}$ it holds that $u(k_0) \geq 0.5 \max(u)$.
  Then at time $t_1=\frac{1}{2}\frac{k_0}{k_0-1}$ it holds that
  \begin{align*}
    u_1(t_1) &\approx c^{2-2\gamma_2} \xi^{\gamma} u_2(t_0),\\
    u_2(t_1)&\approx c^{4-2\gamma_2}\xi^{\gamma} u_2(t_0),\\
    u_3(t_1) &= u_3(t_0), 
  \end{align*}
  where $\gamma_2=\frac{1}{2}-\sqrt{\frac{1}{4}-2c^2}$ and $\gamma=1-2 \gamma_2=\sqrt{1-8c^2}$.
\end{thm}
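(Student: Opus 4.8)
\textbf{Proof plan for Theorem \ref{thm:homogeneous_full}.}
The plan is to decompose the interval $(t_0,t_1)$ into the three pieces $I_1=(t_0,-d\xi^{-1})$, $I_2=(-d\xi^{-1},d\xi^{-1})$ and $I_3=(d\xi^{-1},t_1)$ for a suitable absolute constant $d$, build a solution operator on each, and concatenate. The key observation that makes this tractable is that $u_3$ is conserved: the third row of $M(t)$ vanishes, so $\p_t u_3=0$, which is the matrix analogue of $u(k_0+1)+u(k_0-1)=\mathrm{const}$. Thus the genuine dynamics lives in the $(u_1,u_2)$-plane, driven by a $2\times 2$ block plus an inhomogeneity proportional to the frozen constant $u_3$. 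I would first record the elementary estimates on the coefficient functions: on $I_1\cup I_3$ one has $a(k_0)\approx c/t^2$ and $a(k_0\pm1)=\pm c+O(c|t|)$, so $a(k_0+1)-a(k_0-1)=2c+O(c|t|)$, whereas on $I_2$ one has $a(k_0)\approx c\xi^2$ (ranging over $[c\xi^2/(1+d^2),c\xi^2]$) and $a(k_0\pm1)\approx \pm c$. These are exactly the regimes \eqref{eq:3} and \eqref{eq:4} of Section \ref{sec:Model}, except the coefficients are not constant, so the explicit matrices of Lemmas \ref{lem:innerinterval} and \ref{lem:schroedinger} must be replaced by comparison bounds.

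On $I_1$ (and symmetrically $I_3$) I would reduce to a second-order scalar equation: differentiating $\p_t u_1 = a(k_0)u_2 + (a(k_0+1)-a(k_0-1))u_3$ and substituting $\p_t u_2 = -(a(k_0+1)-a(k_0-1))(u_1+u_3)$ — here the $u_3$-terms and the $O(c|t|)$ corrections to $a(k_0\pm1)$ are to be carried as lower-order forcing — produces $\p_t^2 u_1 + (1+O(c|t|))\,(2c^2/t^2)\,u_1 = (\text{forcing})$, i.e. the scaling-critical Schrödinger operator of Lemma \ref{lem:schroedinger} with $c^2$ replaced by $2c^2$ (this is where the third mode enters and why $\gamma_2=\tfrac12-\sqrt{\tfrac14-2c^2}$ rather than $\tfrac12-\sqrt{\tfrac14-c^2}$). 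The strategy on $I_1$ is the one outlined in the bullet list: (i) a crude power-law \emph{upper} bound obtained by chopping $(t_0,-t)$ into dyadic subintervals on each of which the coefficient is essentially constant and concatenating small-time estimates; (ii) bootstrap this to matching $|t|^{\gamma_2-1}$ upper \emph{and} lower bounds on $u_1$ and $|t|^{\gamma_2}$ bounds on $u_2$ by writing the equation in Duhamel form against the explicit fundamental system $\{|t|^{\gamma_1},|t|^{\gamma_2}\}$ and checking that the $O(c)$ perturbations and the $u_3$-forcing contribute at strictly lower order (using $c<0.2$ so that $\gamma_1-\gamma_2=\sqrt{1-8c^2}$ is bounded below and the geometric-series bookkeeping closes). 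Matching at $t=-d\xi^{-1}$ and $t=d\xi^{-1}$ then gives the analogue of the conjugation computation in Proposition \ref{prop:exponentmechanism}.

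On $I_2$ the coefficient $a(k_0)\approx c\xi^2$ is large but the interval has length $\approx\xi^{-1}$, so $a(k_0)$ integrated over $I_2$ is of order $c\xi\gg1$; here I would run a direct Duhamel/Picard iteration for the $2\times2$ system, noting that each application of the off-diagonal entries costs either a factor $\int_{I_2}a(k_0)\approx 2cd\cdot\xi$ (in the $u_2\to u_1$ direction, after one integration by the large entry) or a factor $\int_{I_2}(a(k_0+1)-a(k_0-1))=O(c)$ (small), so the series is dominated by the single term that hits the big coefficient once, reproducing the $\xi^{1}$ singular value of Lemma \ref{lem:innerinterval} up to constants $\approx 1$; the $u_3$-forcing again enters only at order $c$. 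Concatenating the three solution operators — with the outer two contributing $\xi^{-\gamma_2}$ factors at the endpoints and the middle one a $\xi^{1}$ factor — yields net growth $\xi^{1-2\gamma_2}=\xi^{\gamma}$, and tracking the $c$-powers through the three matchings produces the stated $c^{2-2\gamma_2}$, $c^{4-2\gamma_2}$ prefactors; finally $\p_t u_3=0$ gives $u_3(t_1)=u_3(t_0)$, and the hypothesis $u_2(t_0)\geq 0.5\max u$ together with $c\xi^{\gamma}\gg1$ guarantees the $u_2(t_0)$-term dominates the $u_3(t_0)$-contamination in $u_1(t_1),u_2(t_1)$.

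\textbf{Main obstacle.} The delicate step is the bootstrap on $I_1$ and $I_3$ establishing \emph{two-sided} $|t|^{\gamma_2-1}$ control: one must show that replacing the exact coefficients by their leading constants does not merely give an upper bound but preserves the lower bound, i.e. that the accumulated $O(c|t|)$ corrections and the $u_3$-forcing cannot conspire to cancel the leading $|t|^{\gamma_2-1}$ behavior. This requires keeping the perturbation series at each dyadic scale strictly geometric with ratio $\lesssim c$ (so that $c<0.2$ suffices), and handling the logarithmic near-resonances $|t|^{\gamma_1}$ vs $|t|^{\gamma_2}$ carefully when matching across the many scales between $|t_0|\approx 1$ and $|t|\approx d\xi^{-1}$ — the same logarithmic accumulation that forced the constraint $c\ln(1+\xi)\ll1$ in Section \ref{sec:chains}, which here is resummed exactly into the modified exponent $\gamma$ rather than treated perturbatively.
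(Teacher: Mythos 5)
Your plan follows the paper's own proof essentially step for step: conservation of $u_3$ and reduction to a forced $2\times2$ system (Lemma \ref{lem:reduction1}), a dyadic/frozen-coefficient rough power-law bound followed by a self-improving variation-of-constants bootstrap against the $|t|^{\gamma_1},|t|^{\gamma_2}$ fundamental system on $I_1$ and $I_3$ (Lemmas \ref{lem:roughupperbound}, \ref{lem:improvement}, \ref{lem:particular_solution}), a Duhamel iteration dominated by the single resonant term on $I_2$ (Lemma \ref{lem:middlehom}), and concatenation with $d=c^{-1}$, so it is correct and the same approach. One small slip: the effective coupling producing the $2c^2/t^2$ potential is the sum $a(k_0+1)+a(k_0-1)\approx 2c$ (the difference $a(k_0+1)-a(k_0-1)=O(c|t|)$ only enters the $u_3$-forcing), exactly as in Lemma \ref{lem:reduction1}.
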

We in particular note that the exponent here is different from $1$ (which was
not visible in prior works due to the logarithmic constraints) and that at time $t_1$ our solution $u$ satisfies the
assumptions of this theorem with $k_0$ replaced by $k_0-1$. Thus, we may
iteratively apply this theorem until we reach the frequency $1$ and obtain the
following corollary. In Section \ref{sec:mod} we return to this echo chain
behavior in the context of the full problem and also discuss (modified)
asymptotic behavior as $t\rightarrow \infty$.

\begin{cor}[Echo chain, homogeneous case]
  \label{cor:homogeneous_chain}
  Let $\eta \gg 1$ and $k_0\ll \eta$ be given.
  We then consider the iterated system where initially set
  $\xi=\frac{\eta}{k_0^2}$ and solve \eqref{eq:8} with
  \begin{align*}
    u(t_0)=
    \begin{pmatrix}
      0 \\ 1 \\0
    \end{pmatrix}
  \end{align*}
  In the next step, we then decrease $k_0 \mapsto k_0-1$ and set
  $\xi=\frac{\eta}{(k_0-1)^2}$ and again solve \eqref{eq:8} with new initial
  data being given by
  \begin{align*}
    \begin{pmatrix}
      u_{1}(t_0)\\
      u_{2}(t_0)\\
      u_{3}(t_0)
    \end{pmatrix}_{new}
    =
    \begin{pmatrix}
      0\\
      u_{1}(t_1)\\
      0
    \end{pmatrix}_{old}
  \end{align*}
  We iterate this procedure another $k_0-2$ times until we have reached $k=1$.
  At this time it then holds that
  \begin{align*}
    u_1(1)\approx (c^{2-2\gamma_2})^{k_0} \left(\frac{\eta^{k_0}}{(k_0!)^2}  \right)^{\gamma}.
  \end{align*}
  In particular, choosing $k_0$ maximally for given $\eta$ we obtain that
  $u_1(1)\approx C_1 \exp(C_2 \sqrt{\eta})$ attains a Gevrey $2$ norm inflation factor.
\end{cor}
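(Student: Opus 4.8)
The plan is to apply Theorem \ref{thm:homogeneous_full} once at each frequency level $k=k_0,k_0-1,\dots,2$, multiply the resulting per-step growth factors, and then optimize the product in $k_0$ by Stirling's formula, exactly in the spirit of the two-mode computation in Theorem \ref{thm:modelgrowth}. Since all of the genuine ODE analysis (the modified exponent $\gamma=\sqrt{1-8c^2}$, the matching upper and lower bounds, and the treatment of the resonant sub-interval) is packaged into Theorem \ref{thm:homogeneous_full}, what remains for the corollary is bookkeeping plus a check that the iteration is self-consistent.

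First I would verify that the hypotheses of Theorem \ref{thm:homogeneous_full} are preserved at every step. The top data is $u(t_0)=(0,1,0)^{T}$, and by construction the data fed into the $(k-1)$-st step is again of the form $(0,\ast,0)^{T}$; hence $u_2(t_0)=\max(u)$ — in particular $u_2(t_0)\ge 0.5\max(u)$ — holds trivially at every step, and the symmetric component $u_3$ stays $\equiv 0$, consistent with $u_3(t_1)=u_3(t_0)$. I would also record that the change of variables $t=k^2(\tau-\frac1k)$ used at level $k$ matches the one at level $k-1$: the final time $t_1=\frac12\frac{k}{k-1}$ at level $k$ and the initial time $t_0=-\frac12\frac{k-1}{k}$ at level $k-1$ both correspond to the single resonant time $\tau=\frac12(\frac1k+\frac1{k-1})$, so relabeling the old $u_1(t_1)$ as the new $u_2(t_0)$ is a genuine continuation. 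Finally, the hypothesis $k_0\ll\eta$ gives $\xi=\eta/k^2\ge\eta/k_0^2\gg1$ at the bottom of the chain, and for the choice of $k_0$ that maximizes the growth (see below) one in fact has $\xi\approx c^{-2}\gg1$ at every step, so Theorem \ref{thm:homogeneous_full} is applicable throughout (for levels with $\eta\lesssim k^2$, should they occur for larger $k_0$, one falls back on the rotation bound of the small-$\eta$ lemma, which contributes only bounded factors and does not affect the conclusion).

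With this in place, Theorem \ref{thm:homogeneous_full} contributes at level $k$ the factor $c^{2-2\gamma_2}\xi^{\gamma}=c^{2-2\gamma_2}(\eta/k^2)^{\gamma}$ on the tracked component, so multiplying over the $k_0$ levels of the chain and using $u_2(t_0)=1$ at the top yields
\begin{align*}
  u_1(1)\ \approx\ \prod_{k=1}^{k_0} c^{2-2\gamma_2}\Big(\frac{\eta}{k^2}\Big)^{\gamma}\ =\ (c^{2-2\gamma_2})^{k_0}\Big(\frac{\eta^{k_0}}{(k_0!)^2}\Big)^{\gamma},
\end{align*}
which is the first assertion. For the second, I would insert $(k_0!)^2\sim 2\pi k_0\,(k_0/e)^{2k_0}$, take logarithms, and maximize in $k_0$; the stationarity condition is $2\gamma\ln k_0\approx\gamma\ln\eta+(2-2\gamma_2)\ln c$, i.e.\ $k_0\approx\sqrt{\eta}\,c^{(1-\gamma_2)/\gamma}$ (so $k_0\approx c\sqrt{\eta}$ and $\xi\approx c^{-2}$, as claimed above). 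Substituting this $k_0$ back, the $k_0^{2k_0}$ and $\eta^{k_0}$ contributions cancel and one is left with $u_1(1)\approx C_1\exp(C_2\sqrt{\eta})$ with $C_2=C_2(c)>0$ and a sub-exponential (indeed $\eta$-decaying) prefactor absorbed into $C_1$; this is the Gevrey $2$ norm-inflation factor.

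The one point requiring genuine care is the accumulation of the multiplicative errors hidden in the symbol $\approx$: each application of Theorem \ref{thm:homogeneous_full} carries a factor $1+O(c^2)$ (smaller for larger $\xi$), and there are $\sim k_0\sim\sqrt{\eta}$ applications, so the compounded error is $\exp(O(\sqrt{\eta}))$. I expect this to be the main — but benign — obstacle: one must check that this only perturbs the constant $C_2$ in the exponent and not the $\sqrt{\eta}$ scaling, which follows because the per-step deviation $|C_1-1|$ shrinks as $\xi$ grows while still being summable to $O(k_0)$ along the chain. Everything else reduces to Theorem \ref{thm:homogeneous_full}, which we invoke as a black box.
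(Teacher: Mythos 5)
Your proposal is correct and follows essentially the same route as the paper: iterate Theorem \ref{thm:homogeneous_full} level by level, noting that the relabeled data again satisfies the hypothesis $u_2(t_0)\geq 0.5\max(u)$, and multiply the per-step factors $c^{2-2\gamma_2}(\eta/k^2)^{\gamma}$ to obtain $(c^{2-2\gamma_2})^{k_0}\left(\eta^{k_0}/(k_0!)^2\right)^{\gamma}$, with the Stirling optimization as in Theorem \ref{thm:modelgrowth}. Your additional bookkeeping (matching of the time coordinates across levels, $\xi\gg 1$ along the chain for the optimal $k_0\approx c\sqrt{\eta}$, and the compounding of the $\approx$ errors into the constant $C_2$) is consistent with the paper's conventions and only makes explicit what the paper leaves implicit.
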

We remark that the precise choice of $u_1,u_3$ in the update step here does not change
the result as long as $u_2(t_0)$ is (comparable to) the largest entry and thus
Theorem \ref{thm:homogeneous_full}  can be applied.

\begin{proof}[Proof of Corollary \ref{cor:homogeneous_chain}]
Let $k_0$ be given and let $t_{0}[k_0]=\frac{1}{2}(\frac{1}{k_0+1}-\frac{1}{k_0})$,
$t_1[k_0]=\frac{1}{2}(\frac{1}{k_0-1}-\frac{1}{k_0})$.
Then at time $t_0[k_0]$ it holds that $u_2 \geq 0.5 \max(u)$ and we may thus
apply Theorem \ref{thm:homogeneous_full} to conclude that at time $t_1[k_0]$,
\begin{align*}
  u_1(t_1[k_0]) \approx c^{2-2\gamma_2} \xi^{\gamma} u_2(t_0[k_0]) \geq 0.5 \max (u(t_1[k_0])).
\end{align*}
We then decrease $k_0$ by $1$ and obtain that at the new initial time
$t_0[k_0-1]=t_1[k_0]$ by relabeling the above
\begin{align*}
  u_2(t_0[k_0-1])= u_1(t_1[k_0]) \geq 0.5 \max(u)
\end{align*}
again satisfies the assumptions of Theorem \ref{thm:homogeneous_full}.
Iterating this procedure until we reach $k_0=1$ then yields the result.
\end{proof}

Our proof proceeds by considering the three intervals $I_1=(t_0,
-\frac{d}{\xi})$, $I_2=(-\frac{d}{\xi}, \frac{d}{\xi})$, $I_3=(\frac{d}{\xi},
t_1)$, where $d=c^{-1}$.
The right boundary datum of each interval then serves as the left boundary datum
of the next.

In the following Section \ref{sec:upperandasymptotics} we derive upper and lower
bounds for the evolution on the intervals $I_1$ and $I_3$.
In Section \ref{sec:homogeneousintervals} we then combine these bounds and a
characterization of the evolution on $I_2$ to conclude our proof of Theorem \ref{thm:homogeneous_full}.

\subsubsection{The interval $I_1$}
\label{sec:upperandasymptotics}
Our main estimates for the evolution of the homogeneous problem
\eqref{eq:homogeneous} on the interval $I_1=(t_0,-\frac{d}{\xi})$ are summarized in the following proposition.

\begin{prop}[Left interval, homogeneous case]
  \label{prop:lefthom}
  Let $\xi=\frac{\eta}{k} \gg 1$ and $0<c <0.2$ be given.

  We consider the problem \eqref{eq:homogeneous} on the interval $I_1=(t_0,-\frac{d}{\xi})$ with $d=c^{-1}$.
  Then the unique solution $u(t)$ satisfies
  \begin{align*}
    u_3(t)&=u_3(t_0)
  \end{align*}
  Furthermore, it holds that
  \begin{align*}
    |u_1(t)| &\leq C t^{\gamma_2-1} \max (u(t_0)), \\
    |u_2(t)| &\leq C t^{\gamma_2} \max (u(t_0)).
  \end{align*}
  where $\gamma_1=\frac{1}{2}+\sqrt{\frac{1}{4}-2c^2}$,
  $\gamma_2=\frac{1}{2}-\sqrt{\frac{1}{4}-2c^2}$.

  If in addition  $|u_2(t_0)| \geq 4c |u(t_0)|$, then at time
  $t=-\frac{d}{\xi}$ it holds that
  \begin{align*}
    u_1(-\frac{d}{\xi})&\approx (\frac{d}{\xi})^{\gamma_2-1}\frac{c}{\gamma_2-\gamma_1} u_2(t_0), \\
    u_2(-\frac{d}{\xi})&\approx (\frac{d}{\xi})^{\gamma_2} \frac{\gamma_1}{\gamma_2-\gamma_1}u_2(t_0).
  \end{align*}
For later reference we note that $u_1(-\frac{d}{\xi})\gg u_2(-\frac{d}{\xi})$ and that
$u_1(-\frac{d}{\xi})(\frac{d}{\xi})\approx  c \ u_2(-\frac{d}{\xi})$. 
\end{prop}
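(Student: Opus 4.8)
The plan is to use the symmetry of the three-mode system to reduce the dynamics to a single scalar second order equation of Euler (scaling-critical Schrödinger) type, and then to compare it with the exactly solvable Euler equation $t^{2}\dt^{2}v+2t\,\dt v+2c^{2}v=0$, whose indicial exponents are exactly $\gamma_{1}-1$ and $\gamma_{2}-1$, treating the difference between the true coefficients and their idealized values perturbatively. First I record the conservation law: passing to $u_{1}=\tfrac12(u(k_0+1)-u(k_0-1))$, $u_{2}=u(k_0)$, $u_{3}=\tfrac12(u(k_0+1)+u(k_0-1))$ as in \eqref{eq:8}, the last row of the coefficient matrix vanishes, so $\dt u_{3}=0$ and $u_{3}(t)\equiv u_{3}(t_{0})$ on $I_{1}$, which is the first assertion. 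Writing $a_{0}(t)=c/(\xi^{-2}+t^{2})$, $b(t)=a(k_{0}+1)+a(k_{0}-1)$ and $r(t)=\pm(a(k_{0}+1)-a(k_{0}-1))u_{3}(t_{0})$, the remaining equations have the form $\dt u_{1}=\mp a_{0}(t)u_{2}$ and $\dt u_{2}=\pm b(t)u_{1}+r(t)$, with $|r(t)|\lesssim c|u_{3}(t_{0})|$. Since $a_{0}$ is explicit and nonvanishing I eliminate $u_{2}=\mp a_{0}^{-1}\dt u_{1}$ and obtain the scalar equation
\[
  \frac{d}{dt}\!\left[(\xi^{-2}+t^{2})\,\dt u_{1}\right]+c\,b(t)\,u_{1}=O\!\bigl(c\,|u_{3}(t_{0})|\bigr).
\]
On $I_{1}$ one has $|t|\gg\xi^{-1}$, with the worst ratio $\xi^{-2}/t^{2}=d^{-2}=c^{2}$ attained at $t=-d/\xi$, while $b(t)=2c(1+O(|t|))$ for small $|t|$ and $|b|\lesssim c$ throughout; hence, up to relative errors that are $O(c^{2})$, $O(|t|)$ or of source type, this is the Euler equation above, and the substitution $s=\log(1/|t|)$ recasts it as the constant-coefficient equation $v''-v'+2c^{2}v=(\text{small})$, making the Green's function and the energy bookkeeping transparent.

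For the upper bounds I split $I_{1}$ into the fixed-length non-resonant prelude $[t_{0},-\delta]$, with $\delta$ small but fixed, on which all coefficients are $O(c)$, so a direct Gronwall estimate gives $\|u(-\delta)\|\le C(c)\|u(t_{0})\|$ with $C(c)$ close to $1$ and the hypothesis on $u_{2}(t_{0})$ preserved up to constants; and the asymptotic region $[-\delta,-d/\xi]$, on which I pass to $s$ and write the equation as $v''-v'+(2c^{2}+\varepsilon(s))v=g(s)$. Here $\int|\varepsilon(s)|\,ds\lesssim c^{2}+\delta$ and $\int|g(s)|\,ds\lesssim c|u_{3}(t_{0})|$ are bounded --- crucially the error $\xi^{-2}/t^{2}$, although it grows like $e^{2s}$, is cut off at $s=\log(\xi/d)$ and hence contributes only $O(c^{2})$, uniformly in $\xi$. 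Variation of parameters against $e^{\gamma_{1}s},e^{\gamma_{2}s}$ together with a Gronwall step for $\varepsilon,g$ gives $|v(s)|\le Ce^{\gamma_{1}s}\|u(t_{0})\|$, i.e.\ $|u_{1}(t)|\le C|t|^{\gamma_{2}-1}\|u(t_{0})\|$ since $\gamma_{1}+\gamma_{2}=1$; reading off $u_{2}=\mp a_{0}^{-1}\dt u_{1}=\mp c^{-1}(\xi^{-2}+t^{2})\dt u_{1}$ then yields $|u_{2}(t)|\le C|t|^{\gamma_{2}}\|u(t_{0})\|$. This is the paper's ``rough bound, then iterate'': a crude concatenation of short-time estimates for the first power law, followed by the variation-of-parameters bootstrap to sharpen the exponent and the constants.

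For the two-sided statement at $t=-d/\xi$ I must pin down the coefficient of the dominant branch. On $[-\delta,-d/\xi]$ the solution is $u_{1}(t)=A|t|^{\gamma_{2}-1}+B|t|^{\gamma_{1}-1}+(\text{source term})$; propagating across the prelude gives $u_{1}(-\delta)=O(c)\cdot u_{2}(t_{0})$ and $\dt u_{1}(-\delta)=\mp(1+O(c))a_{0}(-\delta)u_{2}(t_{0})$, and solving the $2\times2$ connection system at $t=-\delta$ (determinant $\gamma_{1}-\gamma_{2}=\sqrt{1-8c^{2}}$, and $\delta^{-\gamma_{2}}=1+O(c^{2})$) forces $A=\tfrac{c}{\gamma_{2}-\gamma_{1}}u_{2}(t_{0})(1+O(c))$, while the $B$-branch is of lower order as $t\to0$. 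The hypothesis $|u_{2}(t_{0})|\ge 4c|u(t_{0})|$ is exactly what makes the contribution of $u_{1}(t_{0})$ (which, since $\dt u_{1}$ vanishes initially for such data, enters $A$ only at relative order $c^{2}$) and of the $u_{3}(t_{0})$-source (which enters at relative order $c$) subordinate to that of $u_{2}(t_{0})$, consistent with the $\approx$-convention. Evaluating at $-d/\xi$ then gives $u_{1}(-d/\xi)\approx(d/\xi)^{\gamma_{2}-1}\tfrac{c}{\gamma_{2}-\gamma_{1}}u_{2}(t_{0})$, and $u_{2}(-d/\xi)=\mp c^{-1}(\xi^{-2}+t^{2})\dt u_{1}$ together with $\gamma_{2}-1=-\gamma_{1}$ gives $u_{2}(-d/\xi)\approx(d/\xi)^{\gamma_{2}}\tfrac{\gamma_{1}}{\gamma_{2}-\gamma_{1}}u_{2}(t_{0})$; the two ``for later reference'' relations follow at once, since $|u_{1}(-d/\xi)/u_{2}(-d/\xi)|\approx c^{2}\xi\gg1$ by $\xi\gg c^{-2}$, and $u_{1}(-d/\xi)(d/\xi)\approx c\,u_{2}(-d/\xi)$ needs only $\gamma_{1}\approx1$.

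The step I expect to be the main obstacle is this last one: turning the crude Gronwall bound into an asymptotic with the $0.01$-accurate constants required by the $\approx$-convention. This demands (i) accurate control of the transition region near $t_{0}$, where neither $a_{0}\approx c/t^{2}$ nor $b\approx 2c$ holds, so that the data handed to the asymptotic region at $-\delta$ is correct to within the tolerated error; (ii) a clean Volterra/iteration argument for $v''-v'+(2c^{2}+\varepsilon)v=g$ over the $s$-interval of length $\log(\xi/d)$ showing that the accumulated perturbation multiplies $A$ only by $1+O(c)+O(\delta)$, rather than by an uncontrolled $O(1)$ factor --- in particular that the exponentially growing error $\xi^{-2}/t^{2}$ costs only $O(c^{2})$ thanks to the cutoff, uniformly in arbitrarily large $\xi$; and (iii) verifying that the nearly constant second branch $|t|^{\gamma_{1}-1}$ and the $u_{3}$-source do not secretly feed back into the coefficient $A$ of the dominant branch. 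The remaining ingredients --- the algebraic reductions, the explicit Euler solutions, the conservation of $u_{3}$ --- are direct computations or routine Gronwall estimates.
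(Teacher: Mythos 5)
Your proposal is correct in substance and shares the paper's overall framework (conservation of $u_3$ from the symmetry, comparison with the scaling-critical Euler problem whose indicial exponents are $\gamma_1-1,\gamma_2-1$, perturbative variation of constants, separate treatment of the $u_3$-source, and a connection argument pinning down the coefficient of the dominant branch), but the technical route is genuinely different. The paper stays with the $2\times 2$ first-order system: it first proves a rough a priori power-law bound by concatenating frozen-coefficient estimates on dyadic subintervals (Lemma \ref{lem:roughupperbound}), then runs a self-improving iteration in the exponent $\sigma$ via variation of constants against the fundamental matrix $S(t)$ (power laws in the toy model, Legendre functions $P,Q$ with the same asymptotics in the exact model, Lemma \ref{lem:improvement}), and handles the $u_3$-source by a particular solution (Lemma \ref{lem:particular_solution}). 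You instead eliminate $u_2=\mp a_0^{-1}\dt u_1$ to get a single scalar equation $\frac{d}{dt}[(\xi^{-2}+t^2)\dt u_1]+cb(t)u_1=O(c^2|u_3(t_0)|)$ and pass to $s=\log(1/|t|)$, where the idealized operator is the constant-coefficient $v''-v'+2c^2v$; a direct Volterra/Gronwall argument with perturbations that are integrable in $s$ (the mollification error $\xi^{-2}/t^2=\xi^{-2}e^{2s}$ being cut off at $s=\log(\xi/d)$ and contributing only $O(c^2)$, uniformly in $\xi$) then gives both the sharp upper bounds and, via the connection system at $-\delta$, the two-sided asymptotics. This buys you a proof with no a priori rough bound, no self-improvement loop, and no special functions, with the $\xi$-uniformity completely transparent; what the paper's formulation buys is that the $2\times2$ fundamental-matrix language is exactly what gets reused in the inhomogeneous bootstrap of Section \ref{sec:inhomogeneous}. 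Two small points to tighten: the source $g(s)$ is not literally integrable in $s$ uniformly in $\xi$ (it is essentially of constant size $c^2|u_3(t_0)|$ over an $s$-interval of length $\log(\xi/d)$); what you actually need, and what is true, is that its Duhamel contribution against the kernel $\frac{e^{\gamma_1(s-\sigma)}-e^{\gamma_2(s-\sigma)}}{\gamma_1-\gamma_2}$ is $O(c^2|u_3(t_0)|)e^{\gamma_1 s}$, matching Lemma \ref{lem:particular_solution}. Likewise the prelude Gronwall constant is close to $1$ only if $c/\delta^2$ is small, so $\delta$ cannot be taken independently tiny; this is the same bookkeeping delicacy you flag yourself in point (i), and the paper's own treatment of the region near $t_0$ is no more precise, so it is a refinement to carry out rather than a gap.
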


We may write the homogeneous three-mode model \eqref{eq:homogeneous} as
\begin{align}
  \label{eq:homogeneous1}
  \dt
  \begin{pmatrix}
    u(k_0-1) \\ u (k_0) \\ u(k_0+1)
  \end{pmatrix}
+
  \begin{pmatrix}
    0 & a & 0 \\
    - b_1 & 0 & b_2 \\
    0 & -a & 0
  \end{pmatrix}
  \begin{pmatrix}
    u(k_0-1) \\ u (k_0) \\ u(k_0+1)
  \end{pmatrix}
             =0,
\end{align}
where $a=\frac{c}{\xi^{-2}+t^2}$, $b_1\neq b_2\approx c$.
For this structure it is apparent that $u_3:=\frac{1}{2}(u(k_0-1)+u(k_0+1))$ is
conserved.

\begin{lem}[Reduction 1]
  \label{lem:reduction1}
  Consider the problem \eqref{eq:homogeneous1}, then it holds that
  \begin{align*}
    u(k-1) + u(k+1) =\text{const.}.
  \end{align*}
  We hence introduce the change of unknowns and notation
  \begin{align*}
    u_1&= \frac{1}{2}(u(k_0-1)-u(k_0+1)), \\
    u_2&= u(k_0), \\
    u_3&= \frac{1}{2}(u(k_0-1)+u(k_0+1)),
  \end{align*}
  where $u_3$ is invariant under the evolution and $(u_1,u_2)$ solve
  \begin{align}
    \label{eq:twomode1}
    \dt
    \begin{pmatrix}
      u_1 \\ u_2
    \end{pmatrix}
+
    \begin{pmatrix}
      0 & a \\
      -b & 0
    \end{pmatrix}
          \begin{pmatrix}
            u_1 \\ u_2
          \end{pmatrix}
  =
    \begin{pmatrix}
      (b_1+b_2)u_3 \\
      (b_1-b_2)u_3.
    \end{pmatrix}
,
  \end{align}
  where $b=b_1+b_2$.
\end{lem}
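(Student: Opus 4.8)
The plan is to prove Lemma \ref{lem:reduction1} by a direct linear change of unknowns, so that the argument is entirely algebraic. Writing \eqref{eq:homogeneous1} out as the three scalar equations
\begin{align*}
  \dt u(k_0-1) &= -a\, u(k_0), \\
  \dt u(k_0) &= b_1\, u(k_0-1) - b_2\, u(k_0+1), \\
  \dt u(k_0+1) &= a\, u(k_0),
\end{align*}
I would first add the first and third equations; the right-hand sides cancel, so $\dt\big(u(k_0-1)+u(k_0+1)\big)=0$ and hence $u_3=\tfrac12\big(u(k_0-1)+u(k_0+1)\big)$ is constant along the flow. The structural reason this works is that $u(k_0)$ enters the equations for $u(k_0\pm1)$ with the same coefficient $a=a(k_0)$ up to sign --- the antisymmetry inherited from the real-valuedness of $\sin(x)$; were the two coefficients different, no such conservation law would exist.

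Next I would subtract the third equation from the first to get $\dt\big(u(k_0-1)-u(k_0+1)\big)=-2a\,u(k_0)$, i.e.\ $\dt u_1+a\,u_2=0$, which is the first row of the reduced system. For the second row I would substitute the inverse change of variables $u(k_0-1)=u_3+u_1$, $u(k_0+1)=u_3-u_1$ into the $u(k_0)$ equation and collect:
\begin{align*}
  \dt u_2 = b_1(u_3+u_1) - b_2(u_3-u_1) = (b_1+b_2)\,u_1 + (b_1-b_2)\,u_3,
\end{align*}
so that with $b:=b_1+b_2$ the pair $(u_1,u_2)$ satisfies the forced two-dimensional linear system \eqref{eq:twomode1}, whose homogeneous part has the antisymmetric off-diagonal structure $(a,-b)$ and whose inhomogeneity is carried entirely by the conserved quantity $u_3$ together with the mismatch $b_1-b_2$ (equivalently $a(k_0-1)-a(k_0+1)$). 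In particular the system genuinely decouples from $u_3$ precisely when the two outer coefficients coincide, which is the obstruction to a completely trivial two-mode reduction already flagged in the surrounding text.

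Finally I would observe that the map $\big(u(k_0-1),u(k_0),u(k_0+1)\big)\mapsto(u_1,u_2,u_3)$ is linear and invertible with both itself and its inverse bounded by absolute constants (the $(u_1,u_3)$ block is a fixed invertible $2\times2$ matrix applied to $(u(k_0-1),u(k_0+1))$), so that every pointwise upper and lower bound derived later for $(u_1,u_2,u_3)$ transfers back to the original modes up to a factor of at most $2$. There is no genuine obstacle in this lemma: it is the bookkeeping step that freezes $u_3$ and exhibits the remaining pair as a forced version of the two-mode, Schrödinger-type system of Section \ref{sec:Model}, on which the power-law estimates of the subsequent subsections rely. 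The only point requiring care is keeping the signs of $b_1,b_2$ and the placement of the $u_3$-forcing consistent with the conventions fixed in \eqref{eq:homogeneous} and \eqref{eq:homogeneous1}.
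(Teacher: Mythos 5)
Your proof is correct and follows essentially the same route as the paper's own (very terse) proof: the paper merely observes the cancellation $\dt\big(u(k_0+1)+u(k_0-1)\big)=-a\,u(k_0)+a\,u(k_0)=0$ and calls the $(u_1,u_2)$ equations ``just a reformulation'' of \eqref{eq:homogeneous1}; you have written that reformulation out explicitly, which is the right thing to do. One caveat: your computation yields
\begin{align*}
\dt u_1 + a\,u_2 = 0, \qquad \dt u_2 - b\,u_1 = (b_1-b_2)\,u_3,
\end{align*}
i.e.\ the forcing in the \emph{first} row vanishes, whereas \eqref{eq:twomode1} as printed carries $(b_1+b_2)u_3$ there. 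Your version is the one the algebra actually gives (both outer modes are driven by $u(k_0)$ with the same coefficient $a$, so no $b_i$ can appear in the $u_1$ equation), so this is a misprint in the statement rather than a gap in your argument; it is also harmless downstream, since the later analysis (e.g.\ Lemma \ref{lem:particular_solution}) only uses that the inhomogeneity is bounded by $Cc\,|u_3(t_0)|$. Still, you should flag the discrepancy explicitly rather than asserting that your derived system coincides verbatim with \eqref{eq:twomode1}.
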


\begin{proof}[Proof of Lemma \ref{lem:reduction1}]
  We observe that
  \begin{align*}
    \dt (u(k_0+1)+u(k_0-1))= -a u(k_0)+ a u(k_0)=0
  \end{align*}
  and hence $u_3$ is conserved.
  The equation satisfied by $u_1,u_2$ is then just a reformulation of equation \eqref{eq:homogeneous1}.
\end{proof}

In order to solve \eqref{eq:twomode1} we first focus on the special case $u_3=0$,
\begin{align}
  \label{eq:9}
      \dt
    \begin{pmatrix}
      u_1 \\ u_2
    \end{pmatrix}
+
    \begin{pmatrix}
      0 & a \\
      -b & 0
    \end{pmatrix}
          \begin{pmatrix}
            u_1 \\ u_2
          \end{pmatrix}
  =0.
\end{align}
That is, we study the homogeneous solutions of \eqref{eq:twomode1}.
In Lemma \ref{lem:particular_solution} we then construct a particular solution
for the case $u_3\neq 0$.

As a first heuristic, note that on $I_1=(t_0,-\frac{d}{\xi})$ it seems
reasonable to approximate
\begin{align}
  a(t)&= \frac{c}{\xi^{-2}+t^2} \approx \frac{c}{t^2}, \\
  b(t)&\approx 2c.
\end{align}
This approximated problem can be explicitly solved and suggests that
\begin{align}
  \begin{pmatrix}
    u_1 \\ u_2
  \end{pmatrix}
  \approx
  \begin{pmatrix}
    \frac{\gamma_1}{c} |t|^{\gamma_1-1} & \frac{\gamma_2}{c} |t|^{\gamma_2-1} \\
    |t|^{\gamma_1} & |t|^{\gamma_2}
  \end{pmatrix}
                     \begin{pmatrix}
                       \alpha \\ \beta
                     \end{pmatrix},
\end{align}
for suitable constants $\alpha, \beta$ and $\gamma_1=\frac{1}{2}+
\frac{1}{2}\sqrt{1-8c^2}$, $\gamma_2=\frac{1}{2}+
\frac{1}{2}\sqrt{1-8c^2}$.
In the following we will show that this heuristic is indeed valid in the sense
that the actual solution operator has the same asymptotic power law behavior as $|t|$ becomes small.
\\

In order to establish these asymptotics we make use of a self-improving
estimate. That is we will assume for the moment that there exists some
$\sigma<\infty$ such that
\begin{align}
  \label{eq:roughupper}
  |u_1(t)| \leq C |t|^{-\sigma} |u(t_0)|,
\end{align}
and show that then $u_1$ and $u_2$ necessarily also satisfy the upper lower
bounds given in Proposition \ref{prop:lefthom}.
This a priori assumption is established at the end of this Section in Lemma \ref{lem:roughupperbound}.

\begin{lem}[Improvement]
  \label{lem:improvement}
  Consider the problem \eqref{eq:9} on the interval $I_1=(t_0,-\frac{d}{\xi})$
  and suppose that $u_1$ satisfies the upper bound \eqref{eq:roughupper}.

  Then there exists a constant $C>0$ (independent of $\xi, \epsilon$, possibly dependent on $c$),
  such that
  \begin{align}
    |u_1(t)| \leq C |t|^{\gamma_2-1} |u(t_0)|, \\
    |u_2(t)| \leq C |t|^{\gamma_2} |u(t_0)|,
  \end{align}
  where $\gamma_2= \frac{1}{2}-\frac{1}{2}\sqrt{1-8c^2}$.
  
  Furthermore, if $|u_2(t_0)|\geq 0.5 |u(t_0)|$, then at time $t=-\frac{d}{\xi}$
  it holds that 
  \begin{align}
    \label{eq:15}
    \begin{split}
    |u_1(t)| \geq C/2 (\frac{d}{\xi})^{\gamma_2-1}|u_2(t_0)|, \\
    |u_2(t)| \leq C/2 (\frac{d}{\xi})^{\gamma_2}|u_2(t_0)|. 
    \end{split}
  \end{align}
  An analogous result holds on the interval $I_3=(\frac{d}{\xi},t_1)$.
\end{lem}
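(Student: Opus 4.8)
The plan is to view \eqref{eq:9} on $I_1$ as a perturbation of the exactly solvable Euler-type system
\begin{align*}
\dt\begin{pmatrix}u_1\\u_2\end{pmatrix}+\begin{pmatrix}0 & \tfrac{c}{t^{2}}\\ -2c & 0\end{pmatrix}\begin{pmatrix}u_1\\u_2\end{pmatrix}=0,
\end{align*}
whose fundamental matrix $\Phi(t)$ has columns $\bigl(-\tfrac{\gamma_j}{2c}|t|^{\gamma_j-1},\,|t|^{\gamma_j}\bigr)^{\top}$ for $j=1,2$ and constant Wronskian $\det\Phi=\tfrac{\gamma_2-\gamma_1}{2c}\approx-\tfrac{1}{2c}$. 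The first ingredient is two perturbation estimates valid on $I_1$, where $|t|\geq\tfrac{d}{\xi}=(c\xi)^{-1}$ and hence $\xi^{-2}/t^{2}\leq c^{2}$: a direct computation gives $|a(t)-\tfrac{c}{t^{2}}|=\tfrac{c\,\xi^{-2}}{t^{2}(\xi^{-2}+t^{2})}\leq\tfrac{c^{3}}{t^{2}}$, and expanding the explicit formula for $a(k_0\pm1)$ around $t=0$ one sees that the terms linear in $t$ in $a(k_0+1)$ and $a(k_0-1)$ cancel, so that $|b(t)-2c|\lesssim c\,(t^{2}+|t|/k_0+\xi^{-2})$. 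Thus, relative to the leading Euler coefficients $\tfrac{c}{t^2}$ and $2c$, both perturbations are $O(c^{2})$ near $t=0$ and $O(1)$ only on the order-one portion of $I_1$.

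For the upper bounds I would run a bootstrap on the growth exponent. Eliminating $u_2=-a^{-1}\dt u_1$ reduces \eqref{eq:9} to the scalar equation $(\xi^{-2}+t^{2})\ddot u_1+2t\dot u_1+b(t)c\,u_1=0$, whose principal part is the Euler equation with indicial roots $\gamma_2-1<\gamma_1-1\in(-1,0)$; equivalently one writes the variation-of-constants formula
\begin{align*}
\begin{pmatrix}u_1\\u_2\end{pmatrix}(t)=\Phi(t)\Phi(t_0)^{-1}\begin{pmatrix}u_1\\u_2\end{pmatrix}(t_0)+\int_{t_0}^{t}\Phi(t)\Phi(s)^{-1}E(s)\begin{pmatrix}u_1\\u_2\end{pmatrix}(s)\,ds,
\end{align*}
with $E$ the $2\times2$ perturbation above. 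Starting from \eqref{eq:roughupper}, $|u_1|\leq C|t|^{-\sigma}|u(t_0)|$ for some finite $\sigma$, together with $|u_2(s)|\lesssim|u(t_0)|+c\int_{t_0}^{s}|u_1|$ coming from $\dt u_2=bu_1$, one inserts these into the integral. On $I_1$ we have $|s|\geq|t|$, each entry of $\Phi(t)\Phi(s)^{-1}$ carries balanced powers $|t|^{\gamma_i-1}|s|^{\gamma_j}$ (up to the $\tfrac{\gamma}{2c}$-type constants), and $E(s)$ supplies the relatively small factor $\lesssim c^{2}|s|^{-2}$; the integral then produces a $C|t|^{\gamma_2-1}|u(t_0)|$ term plus a remainder whose exponent is improved by a fixed positive amount whenever $\sigma>1-\gamma_2$. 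Iterating finitely many times drives $\sigma$ down to $1-\gamma_2$; at that exponent the self-interaction term is a genuine contraction, with constant $\lesssim c^{2}<1$ since $c<0.2$, so the constant closes by a geometric series. This gives $|u_1(t)|\leq C|t|^{\gamma_2-1}|u(t_0)|$ and, via $u_2=-a^{-1}\dt u_1$ (the second row of the formula), $|u_2(t)|\leq C|t|^{\gamma_2}|u(t_0)|$.

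For the endpoint bounds \eqref{eq:15}, assume $|u_2(t_0)|\geq 0.5|u(t_0)|$ and decompose the data in the Euler basis, $(u_1,u_2)(t_0)=\alpha\,(\gamma_1\text{-mode})+\beta\,(\gamma_2\text{-mode})$. Inverting the well-conditioned basis matrix (Wronskian $\approx-\tfrac1{2c}$) at $|t_0|\approx\tfrac12$ yields $\beta\approx\gamma_1|t_0|^{\gamma_1-1}u_2(t_0)+O(c)\,u_1(t_0)$, hence $|\beta|\approx|u_2(t_0)|$; since $(\tfrac d\xi)^{\gamma_2-1}\gg(\tfrac d\xi)^{\gamma_1-1}$ the $\gamma_2$-mode dominates at $t=-\tfrac d\xi$, so $u_1(-\tfrac d\xi)\approx-\tfrac{\gamma_2}{2c}(\tfrac d\xi)^{\gamma_2-1}\beta$ and $u_2(-\tfrac d\xi)\approx(\tfrac d\xi)^{\gamma_2}\beta$, and the $O(c^{2})$ perturbation multiplies these by $1+O(c^{2})$, so \eqref{eq:15} follows with the same $C$. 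On $I_3=(\tfrac d\xi,t_1)$ the two perturbation estimates hold verbatim with $|t|=t$, and since only $b=b_1+b_2$ enters the $(u_1,u_2)$-subsystem, the reflection $t\mapsto-t$ reduces the argument to the $I_1$ case.

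I expect the bootstrap to be the main obstacle: one must verify simultaneously that the Duhamel correction strictly improves the exponent away from the fixed point $\sigma=1-\gamma_2$ and that it is a contraction at the fixed point. Both hinge on the two perturbation bounds, in particular on the cancellation of the $O(t)$ term in $b(t)-2c$ read off from the explicit form of $a(k_0\pm1)$ — without it the effective exponent would depend on $t$ and the clean power law $|t|^{\gamma_2-1}$ would break down. A secondary technical point is bookkeeping the $\tfrac{\gamma_1}{2c}$ versus $\tfrac{\gamma_2}{2c}$ scales in $\Phi$ and $\Phi^{-1}$, so as to see that it is precisely the small-coefficient ($\gamma_2$-type) entries that get amplified.
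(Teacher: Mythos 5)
Your skeleton — variation of constants around the power-law fundamental system, a self-improving bootstrap that drives $\sigma$ down to $1-\gamma_2$, then tracking $(\alpha,\beta)$ to get the endpoint bounds \eqref{eq:15} — is the same as the paper's. The genuine deviation is the reference system: the paper keeps the exact coefficient $a(t)=\frac{c}{\xi^{-2}+t^2}$ (its fundamental system is written via Legendre functions and shown to be comparable to the power laws) and perturbs only in $b(t)-2c$, whereas you also treat $a(t)-\frac{c}{t^2}$ as part of the Duhamel forcing. That is a legitimate, more elementary route, but as written it has a gap precisely there. Your working bound $|a(t)-\frac{c}{t^2}|\le \frac{c^3}{t^2}$ has the same homogeneity as the leading coefficient, and in the $\beta$-channel of the Duhamel kernel the powers balance to $|s|^{\gamma_1+\gamma_2-2}=|s|^{-1}$: the contribution of $\delta a\,u_2$ to $\beta$ is of size $c^4\int_{d/\xi}^{|t_0|}\frac{ds}{s}\sim c^4\ln(c\xi)$, hence a term $c^4\ln(c\xi)\,|t|^{\gamma_2}$ in $u_2$ and $c^5\ln(c\xi)\,|t|^{\gamma_2-1}$ in $u_1$, and a deviation of $\beta$ from $\beta(t_0)\approx u_2(t_0)$ of the same size. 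This is not a contraction uniformly in $\xi$, and $\xi$-uniformity is the whole point of this lemma: it is exactly how the logarithmic smallness constraint $c\ln(1+\eta)\lesssim 1$ of Section \ref{sec:chains} gets removed. With $c$ fixed and $\xi$ arbitrarily large your bootstrap constant blows up and the lower bound in \eqref{eq:15} is no longer protected.

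The gap is fixable inside your framework, and you already wrote down what is needed: from the exact identity $a(t)-\frac{c}{t^2}=\frac{c\,\xi^{-2}}{t^2(\xi^{-2}+t^2)}\le c\,\xi^{-2}|t|^{-4}$ one gets $\int_{d/\xi}^{|t_0|} s\,\bigl|a(s)-\frac{c}{s^2}\bigr|\,ds\lesssim c\,\xi^{-2}(c\xi)^2=c^3$ uniformly in $\xi$, so all $\delta a$-contributions to $(\alpha,\beta)$ are $O(c^3)$, $O(c^4)$ and both the contraction at $\sigma=1-\gamma_2$ and the endpoint comparison close; do not discard the extra $\xi^{-2}/s^2$ decay before integrating. (The paper sidesteps this entirely by never perturbing in $a$.) Two smaller points: the cancellation of the $O(t)$ terms in $a(k_0+1)+a(k_0-1)$ that you single out as essential is not needed — the crude Taylor bound $|b(t)-2c|\lesssim c|t|$, true with or without that cancellation, already supplies the extra power of $t$ that makes the $\delta b$-terms integrable ($O(c)$ in $\beta$, $O(c^3)$ in $\alpha$), and this is all the paper uses; and your reflection reduction for $I_3$ agrees with the paper's treatment.
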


\begin{proof}[Proof of Lemma \ref{lem:improvement}]
We note that $|b(t)-2c| \leq  10 ct$ by Taylor's formula and hence treat it as a
perturbation.

\underline{A toy model:}
In order to introduce our method of proof let us first consider the approximated
problem where we replace $a(t)$ by exactly $\frac{c}{t^2}$.
Then our problem can be written as
    \begin{align*}
    \dt u +
    \begin{pmatrix}
      0 & \frac{c}{t^2} \\
      -2c & 0
    \end{pmatrix}
            u =
            \begin{pmatrix}
              0 \\ \mathcal{O}(c|t|) u_1
            \end{pmatrix},
    \end{align*}
 where by assumption \eqref{eq:roughupper}it holds that $\mathcal{O}(c|t|)u_1=
 c\mathcal{O}(|t|^{1-\sigma})$. We then consider a variation of constants
 ansatz:
  \begin{align*}
    u(t)=
    \begin{pmatrix}
      \frac{\gamma_1}{c}|t|^{\gamma_1-1} & \frac{\gamma_2}{c}|t|^{\gamma_2-1}\\
      |t|^{\gamma_1} & |t|^{\gamma_2}
    \end{pmatrix}
                       \begin{pmatrix}
                         \alpha \\ \beta
                       \end{pmatrix}.
  \end{align*}
Plugging this in, we get that
  \begin{align*}
        \begin{pmatrix}
      \frac{\gamma_1}{c}|t|^{\gamma_1-1} & \frac{\gamma_2}{c}|t|^{\gamma_2-1}\\
      |t|^{\gamma_1} & |t|^{\gamma_2}
    \end{pmatrix} \dt \begin{pmatrix}
                         \alpha \\ \beta
                       \end{pmatrix} =
    \begin{pmatrix}
      0 \\
      \mathcal{O}(ct) u_1(t)
    \end{pmatrix}\\
   \Leftrightarrow \dt \begin{pmatrix}
      \alpha \\ \beta
    \end{pmatrix} = \frac{c}{\gamma}
    \begin{pmatrix}
      |t|^{\gamma_2} & - \frac{\gamma_2}{c}|t|^{\gamma_2-1} \\
      - |t|^{\gamma_1} & \frac{\gamma_1}{c}|t|^{\gamma_1-1} 
    \end{pmatrix}
    \begin{pmatrix}
      0 \\
      \mathcal{O}(ct) u_1(t)
    \end{pmatrix}.
  \end{align*}
  Now using that $\mathcal{O}(ct)|u_1|\leq Cc |t|^{1-\sigma}$, we get that
  \begin{align*}
    |\dt \alpha| \leq Cc |t|^{\gamma_2-1} |t|^{1-\sigma}= Cc |t|^{\gamma_2-\sigma}, \\
    |\dt \beta|\leq  Cc |t|^{\gamma_1-\sigma}.
  \end{align*}
  Integrating, we obtain that
  \begin{align*}
    |\alpha|\leq C + C|t|^{1+\gamma_2-\sigma},\\
    |\beta|\leq C + C|t|^{1+\gamma_1-\sigma}.
  \end{align*}
  Plugging this into our ansatz it follows that
  \begin{align*}
    |u_1(t)|&\leq |t|^{\gamma_1-1} (C +|t|^{1+\gamma_2-\sigma} ) + |t|^{\gamma_2-1} (C + |t|^{1+\gamma_1-\sigma}) \\
            &\leq  C |t|^{\gamma_2-1} + C |t|^{1-\sigma}, \\
    |u_2(t)| & \leq |t|^{\gamma_1} (C +|t|^{1+\gamma_2-\sigma} ) + |t|^{\gamma_2} (C+ |t|^{1+\gamma_1-\sigma} ) \\
    & \leq C |t|^{\gamma_1} + C |t|^{2-\sigma} + C |t|^{2-\sigma}.
  \end{align*}
  Here, we used that $\gamma_1+\gamma_2=1$, but may also more roughly estimate $\gamma_1+\gamma_2\geq 2 \gamma_2 >0$.
  
  In particular, we observe that
  \begin{align*}
    |u_1(t)| \leq  C |t|^{\gamma_2-1} + C |t|^{2\gamma_2-\sigma} \leq C |t|^{-\sigma}
  \end{align*}
  is a strict improvement over \eqref{eq:roughupper} if $\sigma> 1-\gamma_2$
  initially.
  We may then repeat our argument with \eqref{eq:roughupper} with this smaller
  $\sigma$ and successively improve until $\sigma=1-\gamma_2$.\\

  With this improved upper bound at hand, we now can establish the lower bounds
  \eqref{eq:15}. For this we observe that
  \begin{align*}
    \begin{pmatrix}
      \alpha(t_0) \\ \beta(t_0)
    \end{pmatrix}
   & =
    S(t_0)^{-1}
    \begin{pmatrix}
      u_1(t_0) \\ u_2(t_0)
    \end{pmatrix} \\
&\approx  \frac{c}{\sqrt{1-8c^2}}
     \begin{pmatrix}
       \gamma_1/c t_0^{\gamma_2-1} & - t_0^{\gamma_2} \\
       - \gamma_2/c t_0^{\gamma_1-1} & t_0^{\gamma_1}
     \end{pmatrix}
                    \begin{pmatrix}
                              u_1(t_0) \\ u_2(t_0)
                            \end{pmatrix}.
  \end{align*}
  Under the assumption of the lower bound on $u_2(t_0)$ it thus follows that
  $\beta(t_0)$ is bounded below and of comparable size to $u_2(t_0)$.
  It thus only remains to show that $(\alpha(t),\beta(t))$ do not deviate too
  much from this.
  Here, we use the above developed bounds on $\dt
  \alpha$ and $\dt \beta$:
  \begin{align*}
    |\dt \beta| &\leq \frac{c}{\gamma}  \frac{\gamma_1}{c} |t^{\gamma_1}|
                  |u_1(t)| |t| Cc , \\
    \leadsto |\beta(t)-\beta(t_0)| &= \mathcal{O}(c), \\
    |\dt \alpha|&\leq \frac{c}{\gamma} \frac{\gamma_2}{c} |t|^{\gamma_2-1} Cc |t| |u_1(t)|\\
    \leadsto |\alpha(t)-\alpha(t_0)|&= \mathcal{O}(c^3),
  \end{align*}
  where we used that $\gamma_2=\frac{1}{2}-\sqrt{\frac{1}{4}-2
    c^2}=\mathcal{O}(c^2)$. Since $c$ is small, $\alpha(t),\beta(t)$ hence
  indeed do not deviate too much and the result follows.
\\

  \underline{Proof in the actual model:} This proof is analogous with the only
  difference being that the solution matrix is now not anymore given by power
  laws but rather by explicit hypergeometric/Legendre functions, which are bounded above
  and below by such power laws.

  We note that the problem
  \begin{align*}
    \dt \psi +
    \begin{pmatrix}
      0 & \frac{c}{\xi^{-2}+t^2} \\
      -2c & 0
    \end{pmatrix}
\psi=0,
  \end{align*}
  can be decoupled into two second order equations and that $\psi_1$ solves
  \begin{align*}
    \dt (\xi^{-2}+t^2) \dt \psi_1 + 2c^2 \psi_1=0.
  \end{align*}
  After some changes of coordinates one sees that this is a Legendre type
  equation
  \begin{align*}
    (1-t^2)y''(t) -2t y'(t)+ \nu(\nu+1)y(t)=0
  \end{align*}
  for imaginary $t$, which has the general solution
  \begin{align*}
    \psi_1(t)= C_1 P(-\frac{1}{2}+\frac{1}{2}\sqrt{1-8c^2}, it \xi)
    + C_2 Q(-\frac{1}{2}+\frac{1}{2}\sqrt{1-8c^2}, it\xi),
  \end{align*}
  where $P$ is the Legendre function of the first kind and $Q$ is the Legendre
  function of the second kind (see the NIST Digital Library of Mathematical
  Functions \cite{NIST:DLMF} \href{https://dlmf.nist.gov/14}{https://dlmf.nist.gov/14}). We here write $P(\nu,
  z)=P_{\nu}(z)=P_{\nu}^0(z)$).
  Most importantly, we are interested in the interval of $t$ where $t \xi
  \in(c^{-1},\xi)$ is very large and thus can use the asymptotics (\href{https://dlmf.nist.gov/14.8}{https://dlmf.nist.gov/14.8})
  \begin{align*}
    P(\nu, z) \sim c_{1}z^{\nu},\\
    Q(\nu, z) \sim c_{2}z^{-\nu-1},\\
  \end{align*}
  which for our purposes becomes $t^{\gamma_1-1}, t^{\gamma_2-1}$. Here,
  $c_{1,2}$ are explicit (if complicated) coefficients in terms of $\nu$.
  
  The corresponding values of $\psi_2$ can then be obtained from
  \begin{align*}
    \psi_2 = - \frac{\xi^{-2}+t^2}{c} \dt \psi_1,
  \end{align*}
  and the recurrence relation (see \href{https://dlmf.nist.gov/14.10}{https://dlmf.nist.gov/14.10})
  \begin{align*}
    (1-x^2)\frac{d}{dx}P(\nu, x)=  \nu P(\nu-1)(x)- \nu x P_{\nu}(x).
  \end{align*}
  Note that here $\nu \sim -\gamma_1,-\gamma_2$ was negative.
  Thus for our purposes these functions asymptotically behave as linear
  combinations of $(\xi t)^{\gamma_1}, (\xi t)^{\gamma_2}$.

  Given these homogeneous solutions, we may thus again make the ansatz that
  \begin{align*}
    u(t)= S(t)
    \begin{pmatrix}
      \alpha(t) \\ \beta(t)
    \end{pmatrix},
  \end{align*}
  where all entries of $S(t)$ are comparable to the power laws obtained above
  and $\det(S(t))=\det(S(t_0))$ is bounded away from zero.
  We remark that the conservation of the determinant follows from
  \begin{align*}
    \dt S + M S =0
  \end{align*}
  and noting that the trace of $M$ vanishes.
\end{proof}

Given the solution operator of problem \eqref{eq:9} we return to problem
\eqref{eq:twomode1} and introduce a particular solution to account for $u_3$.

\begin{lem}[Particular Solution]
  \label{lem:particular_solution}
  Consider the problem \eqref{eq:twomode1} with initial conditions
  \begin{align*}
    u_1(t_0)=u_2(t_0)=0.
  \end{align*}
  Then in the notation of Lemma \ref{lem:improvement} it holds that
  \begin{align*}
    |u_1(-\frac{d}{\xi})| \leq c|u_3(t_0)| C (\frac{d}{\xi})^{\gamma_2-1},\\
    |u_2(-\frac{d}{\xi})| \leq c|u_3(t_0)| C (\frac{d}{\xi})^{\gamma_2}.
  \end{align*}
  An analogous result holds on $I_3=(\frac{d}{\xi},t_1)$.
\end{lem}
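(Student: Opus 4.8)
The plan is to solve \eqref{eq:twomode1} with the prescribed vanishing data for $u_1,u_2$ by variation of constants against the homogeneous solution operator $S(t)$ constructed in the proof of Lemma \ref{lem:improvement}, and then to estimate the resulting Duhamel integral using the two-sided power-law bounds on $S(t)$ and $S(t)^{-1}$ recorded there. Since $u_3\equiv u_3(t_0)$ is conserved, the right-hand side of \eqref{eq:twomode1} is a fixed inhomogeneity $F(t)$ whose components are the coupling coefficients $a(k_0\pm1)$ (equivalently the $b_i$) multiplied by $u_3(t_0)$; because we work on the resonant interval, away from all other critical times, one has $|a(k)|\leq 4c$ for every $k\neq k_0$, so $|F(t)|\lesssim c\,|u_3(t_0)|$ uniformly on $I_1$. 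Writing the particular solution as $\binom{u_1}{u_2}(t)=S(t)\binom{\alpha(t)}{\beta(t)}$ with $\alpha(t_0)=\beta(t_0)=0$, we reduce to the scalar integrations $\partial_t\binom{\alpha}{\beta}=S(t)^{-1}F(t)$, exactly as in the variation-of-constants step in the proof of Lemma \ref{lem:improvement}.

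Next I would insert the bounds on $S$ and $S^{-1}$. By Lemma \ref{lem:improvement} the entries of $S(t)$ are comparable to $\frac{\gamma_1}{c}|t|^{\gamma_1-1},\frac{\gamma_2}{c}|t|^{\gamma_2-1}$ in the first row and to $|t|^{\gamma_1},|t|^{\gamma_2}$ in the second, with $\det S$ conserved and $|\det S|\approx\gamma/c$, so $S(t)^{-1}$ has entries comparable to $\frac{c}{\gamma}|t|^{\gamma_2},\frac{\gamma_2}{\gamma}|t|^{\gamma_2-1},\frac{c}{\gamma}|t|^{\gamma_1},\frac{\gamma_1}{\gamma}|t|^{\gamma_1-1}$. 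Combined with $|F|\lesssim c\,|u_3(t_0)|$ this gives $|\partial_t\alpha|\lesssim \gamma_2\,|t|^{\gamma_2-1}\,c\,|u_3(t_0)|$ and $|\partial_t\beta|\lesssim \gamma_1\,|t|^{\gamma_1-1}\,c\,|u_3(t_0)|$ on $I_1$ (the $c^2|t|^{\gamma_j}$ contributions from the other entries of $S^{-1}$ are strictly smaller). On $I_1=(t_0,-d/\xi)$ the variable $|t|$ ranges over $(d/\xi,|t_0|)$ with $|t_0|\approx\frac12$, and since $\gamma_1-1,\gamma_2-1\in(-1,0)$ both primitives converge with $\int|s|^{\gamma_2-1}\,ds\lesssim\gamma_2^{-1}$ and $\int|s|^{\gamma_1-1}\,ds\lesssim\gamma_1^{-1}$; the prefactors $\gamma_2,\gamma_1$ cancel these, so $|\alpha(t)|,|\beta(t)|\lesssim c\,|u_3(t_0)|$ throughout $I_1$. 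Plugging this back into the ansatz and using the power-law bounds on $S(t)$ once more yields $|u_1(t)|\lesssim |t|^{\gamma_1-1}|u_3(t_0)|+c^2|t|^{\gamma_2-1}|u_3(t_0)|$ and $|u_2(t)|\lesssim c\,|u_3(t_0)|\,(|t|^{\gamma_1}+|t|^{\gamma_2})$.

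It then remains to evaluate at $t=-d/\xi$, where $|t|=d/\xi=(c\xi)^{-1}\ll1$. Using $\gamma_1>\gamma_2$ and $\gamma_1-\gamma_2=\gamma$ we rewrite $(d/\xi)^{\gamma_1-1}=(d/\xi)^\gamma(d/\xi)^{\gamma_2-1}$ and $(d/\xi)^{\gamma_1}=(d/\xi)^\gamma(d/\xi)^{\gamma_2}$; since $(d/\xi)^\gamma\lesssim c$ in the regime $\xi\gg c^{-1}$, the displayed bounds become $|u_1(-d/\xi)|\lesssim c\,|u_3(t_0)|\,(d/\xi)^{\gamma_2-1}$ and $|u_2(-d/\xi)|\lesssim c\,|u_3(t_0)|\,(d/\xi)^{\gamma_2}$, which is the claim. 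The analogous statement on $I_3=(d/\xi,t_1)$ follows by running the same computation forward in time from $d/\xi$ with $t>0$: there $|t|^{\gamma_2-1}$ is decreasing rather than increasing, which only helps, and the endpoint $t_1\approx\frac12$ satisfies $t_1^{\gamma_j}\approx1$.

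The step I expect to be the main obstacle is keeping track of exactly the single power of $c$. The inhomogeneity carries a factor $c$, but the weight $|s|^{\gamma_2-1}$ is almost non-integrable and its integral reinstates a factor $\gamma_2^{-1}\sim c^{-2}$; this is cancelled precisely because the relevant entry of $S(s)^{-1}$ is itself of size $\gamma_2=\mathcal{O}(c^2)$, so one must use the exact normalisations of $S$ from Lemma \ref{lem:improvement} and not a crude bound. Likewise the ``growing'' branch $|t|^{\gamma_1-1}$ of $S(t)$ is a priori only controlled by $|t|^{\gamma_2-1}$, and the gain of the extra $c$ from it uses the smallness $(d/\xi)^\gamma\lesssim c$, valid for $\xi$ large compared to $c^{-1}$. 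Finally, as in Lemma \ref{lem:improvement}, passing from the explicitly solvable model $a(t)=c/t^2$ to the true coefficient $a(t)=c/(\xi^{-2}+t^2)$ is harmless: the homogeneous matrix $S(t)$ is then built from Legendre functions bounded two-sidedly by the same power laws, and the discrepancy $b(t)-2c=\mathcal{O}(c|t|)$ is absorbed in the same way.
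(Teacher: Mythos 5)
Your proof is correct and follows essentially the same route as the paper: a variation-of-constants ansatz against the homogeneous solution operator $S(t)$ from Lemma \ref{lem:improvement}, conservation of $u_3$ to bound the inhomogeneity by $\mathcal{O}(c\,|u_3(t_0)|)$, integrability of $S(t)^{-1}$ via the conserved determinant, and plugging $(\alpha,\beta)=\mathcal{O}(c\,|u_3(t_0)|)$ back into the power-law bounds — you in fact spell out the final bookkeeping (trading $(d/\xi)^{\gamma_1-1}$ for $c\,(d/\xi)^{\gamma_2-1}$), which the paper leaves implicit. The only caveat is that your inequality $(d/\xi)^{\gamma}\lesssim c$ really requires $\xi\gtrsim c^{-2}$ rather than merely $\xi\gg c^{-1}$ (alternatively one can exploit that the second component of the forcing, $(b_1-b_2)u_3$, is itself of size $\mathcal{O}(c\,|t|\,|u_3(t_0)|)$, which yields $\alpha=\mathcal{O}(c^{2}|u_3(t_0)|)$ and removes the issue), but this is harmless in the regime of large $\xi$ considered here.
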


\begin{proof}[Proof of Lemma \ref{lem:particular_solution}]

  Similarly to the proof of Lemma \ref{lem:improvement} we consider a variation
  of constants ansatz but this time using the solution operator $S(t)$ of the
  exact problem (that is, not approximating $b$).
  Here, we introduce
  \begin{align*}
    f=
    \begin{pmatrix}
      (b_1+b_2) u_3\\
      (b_1-b_2) u_3
    \end{pmatrix}
  \end{align*}
  in order to simplify notation. In particular recall that $u_3$ is conserved
  and hence $|f(t)|\leq 2 c |u_3(t_0)|$ for all times.
  
  We hence consider the ansatz
  \begin{align*}
    u(t)=S(t)
    \begin{pmatrix}
      \alpha(t) \\ \beta(t)
    \end{pmatrix},
  \end{align*}
  where $\alpha(t_0)=\beta(t_0)=0$ and $S(t)$ satisfies the power law bounds
  discussed in the proof of Lemma \ref{lem:improvement}. In particular,
  $|S^{-1}(t)|\leq C |t|^{\gamma_2-1}$ is integrable in time (by using the
  conserved determinant and Cramer's rule).

  Thus, it follows that
  \begin{align*}
    S(t) \dt     \begin{pmatrix}
      \alpha(t) \\ \beta(t)
    \end{pmatrix} = f, \\
    \Leftrightarrow \dt \begin{pmatrix}
      \alpha(t) \\ \beta(t)
    \end{pmatrix} = S(t)^{-1} f,
  \end{align*}
  and hence
  \begin{align*}
    (\alpha(t), \beta(t)) =\int_{t_0}^t \p_s (\alpha(s), \beta(s)) ds
  \end{align*}
  is bounded by $ C c |u_3(t_0)|$.
\end{proof}

Finally, we return to the proof of the a priori bound by a power law \eqref{eq:roughupper}.

\begin{lem}
  \label{lem:roughupperbound}
  Consider the problem \eqref{eq:9} on the interval $(t_0,-\delta)$ with $\xi^{-1}\ll \delta \ll 1$, then there exists a constant
  $C>0$ and $\sigma>0$ such that
  \begin{align*}
    |u_1(t)|\leq C |t|^{-\sigma} \|(u_1,u_2)(t_0)\|
  \end{align*}
  for all $t \in (t_0,-\delta)$. Here, $C$ and $\sigma$ may depend on $c$ but are
  uniform in $\xi$ and $\delta$.

  Analogously, for the interval $(\delta,t_1)$ it holds that
  \begin{align*}
    |u_1(t)|\leq C |t|^{-\alpha} \|(u_1,u_2)(t_1)\|.
  \end{align*}
\end{lem}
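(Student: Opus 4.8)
The plan is to exploit the skew (rotation-type) structure of \eqref{eq:9} by means of an almost-conserved weighted energy, which turns the naive exponential Gr\"onwall bound into a power law. Recall from \eqref{eq:homogeneous1} that in \eqref{eq:9} one has $a(t)=\frac{c}{\xi^{-2}+t^2}>0$ and $b(t)=b_1(t)+b_2(t)$ with $b(t)\approx 2c$, where $b$ is a smooth positive function of $t$ whose denominators stay bounded above and below on $(t_0,t_1)$ (this window is uniformly separated from all other critical times $\tfrac1k$, $k\neq k_0$), so that $b(t)\gtrsim c$, $b(t)\lesssim c$ and $|b'(t)|\lesssim c$ uniformly in $\xi$ and $k_0$. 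First I would introduce
\[
  E(t):=b(t)\,u_1(t)^2+a(t)\,u_2(t)^2\ \ge\ 0 ,
\]
and note that, because \eqref{eq:9} has the form $\dt u_1=-a u_2$, $\dt u_2=b u_1$, the cross terms in $\dt E$ cancel identically, leaving
\[
  \dt E\;=\;b'(t)\,u_1^2+a'(t)\,u_2^2 .
\]
If $E$ vanishes at a single point, then $u_1=u_2=0$ there and, by uniqueness for \eqref{eq:9}, $u\equiv0$ and there is nothing to prove; hence we may assume $E>0$ throughout.

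Second, I would estimate the logarithmic derivative of $E$. Writing $b'u_1^2=\tfrac{b'}{b}(b u_1^2)\le \tfrac{|b'|}{b}E$ and $a'u_2^2=\tfrac{a'}{a}(a u_2^2)\le \tfrac{|a'|}{a}E$, and using
\[
  \frac{|a'(t)|}{a(t)}=\frac{2|t|}{\xi^{-2}+t^2}\le\frac{2}{|t|},\qquad \frac{|b'(t)|}{b(t)}\le C=C(c),
\]
one obtains $\bigl|\dt\ln E(t)\bigr|\le \tfrac{2}{|t|}+C$ on $(t_0,-\delta)$. Integrating this between $t_0$ and $t$, and using $\int_{|t|}^{|t_0|}\tfrac{2}{s}\,ds=2\ln\tfrac{|t_0|}{|t|}$ together with $|t-t_0|\le1$, gives
\[
  E(t)\ \le\ e^{C}\Bigl(\tfrac{|t_0|}{|t|}\Bigr)^{2}E(t_0)\ \le\ C'(c)\,|t|^{-2}\,E(t_0).
\]

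Third, I would convert this into the pointwise statement. Since $b(t)\gtrsim c$ we have $u_1(t)^2\le E(t)/b(t)\lesssim c^{-1}|t|^{-2}E(t_0)$, while $E(t_0)=b(t_0)u_1(t_0)^2+a(t_0)u_2(t_0)^2\lesssim c\,\|(u_1,u_2)(t_0)\|^2$ because $|t_0|\approx\tfrac12$ forces $a(t_0)\le c/t_0^2\lesssim c$ and $b(t_0)\lesssim c$. Combining, $|u_1(t)|\le C(c)\,|t|^{-1}\|(u_1,u_2)(t_0)\|$, so the lemma holds with $\sigma=1$ and constants depending only on $c$, not on $\xi$ or $\delta$. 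For the interval $(\delta,t_1)$ the argument is identical, run backwards in time from $t_1$: on $t>0$ one has $a'<0$, but the only inequality used is $\bigl|\dt\ln E\bigr|\le\tfrac{2}{|t|}+C$, which integrated from $t$ to $t_1$ again yields $E(t)\lesssim|t|^{-2}E(t_1)$ and hence $|u_1(t)|\le C(c)\,|t|^{-1}\|(u_1,u_2)(t_1)\|$.

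I do not expect a genuine obstacle here: the weighted energy is exactly matched to the skew structure, so the argument is short, and the crude exponent $\sigma=1$ is all that is needed, since Lemma~\ref{lem:improvement} afterwards bootstraps it down to the sharp value $1-\gamma_2$. The only points requiring care are (i) checking that $b$ and $|b'|$ are comparable to $c$ uniformly in $\xi$ and $k_0$, i.e.\ that the resonant window $(t_0,t_1)$ stays uniformly away from the neighbouring critical times, and (ii) the case distinction ensuring $E>0$. One may also note that the estimate is robust to the sign convention in \eqref{eq:9}: in a ``hyperbolic'' configuration $\dt u_1=a u_2$, $\dt u_2=b u_1$ the only change in $\dt E$ is an extra term $4ab\,u_1u_2$, which is bounded by $2\sqrt{ab}\,E\le \tfrac{Cc}{|t|}E$, so one still gets a power law, merely with a larger $\sigma$.
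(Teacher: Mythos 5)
Your proof is correct, and it takes a genuinely different route from the paper's. The paper proves this rough bound by freezing $b$ at the left endpoint of each dyadic block $(t_0 2^{-j},t_0 2^{-j-1})$, using the explicit power-law fundamental system of the frozen problem in a variation-of-constants ansatz, showing the Duhamel correction is small on each block, and concatenating; this deliberately crude concatenation yields $\sigma=\ln(100)/\ln 2$. You instead exploit the exact skew structure of \eqref{eq:9} through the weighted energy $E=b\,u_1^2+a\,u_2^2$, whose cross terms cancel identically, so that $\bigl|\dt\ln E\bigr|\le \frac{|a'|}{a}+\frac{|b'|}{b}\le \frac{2}{|t|}+C$, and integration gives $E(t)\lesssim |t|^{-2}E(t_0)$, hence $\sigma=1$ uniformly in $\xi$ and $\delta$. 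This is shorter, avoids the explicit solution operator at this stage, and gives a better exponent; in fact with $\sigma=1$ the bootstrap of Lemma \ref{lem:improvement} essentially closes in a single step, since the improved bound is already $C|t|^{\gamma_2-1}+C$. The inputs you need beyond the equation itself, namely positivity of $a,b$ and $b\approx c$, $|b'|\lesssim c$ on $(t_0,t_1)$, do hold: the window stays uniformly away from the critical times $-\frac{k_0}{k_0+1}$ and $\frac{k_0}{k_0-1}$ of the neighboring modes, so the relevant denominators are bounded below by an absolute constant; your handling of the degenerate case $E\equiv 0$ and of the right interval (integrating backwards from $t_1$, matching the statement with data at $t_1$) is also fine. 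The only thing the paper's dyadic argument buys is indifference to the sign structure, and as you note the energy method survives a hyperbolic sign change too, merely with a slightly larger exponent, so nothing essential is lost by your route.
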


\begin{proof}[Proof of Lemma \ref{lem:roughupperbound}]
Our proof is based on concatenating estimates on a large number of
very small intervals, where we can use perturbative arguments.

Thus, consider an interval $I=(t_0,t_0+\nu) \subset (-1,\delta)$ with $t_0$
given and $\nu$ to be specified later.
Then on this interval it holds that
\begin{align*}
  |b(t)-b(t_0)| \leq 4 c \nu.
\end{align*}
We thus write our system as
\begin{align*}
  \dt
  \begin{pmatrix}
    u_1 \\ u_2
  \end{pmatrix}
+
  \begin{pmatrix}
    0 & a \\
    - b(t_0) & 0
  \end{pmatrix}
   \begin{pmatrix}
    u_1 \\ u_2
  \end{pmatrix}             
  = 
  \begin{pmatrix}
    0 & 0 \\
    b(t)-b(t_0) & 0
  \end{pmatrix}
   \begin{pmatrix}
    u_1 \\ u_2
  \end{pmatrix}. 
\end{align*}
Considering the right-hand-side as a perturbation, we make the ansatz
\begin{align}
  \label{eq:10}
     \begin{pmatrix}
    u_1 \\ u_2
  \end{pmatrix}=
 S(t)
  \begin{pmatrix}
    \alpha(t) \\\beta(t)
  \end{pmatrix},
\end{align}
where $S(t)$ is the solution operator to the problem with $b$ replaced by
$b(t_0)$. Then by Lemma \ref{lem:improvement} we know that component-wise
\begin{align*}
  S(t)\sim   \begin{pmatrix}
    -\frac{\gamma_1}{b(t_0)}|t|^{\gamma_1-1} & -\frac{\gamma_2}{b(t_0)} |t|^{\gamma_2-1} \\
    |t|^{\gamma_1} & |t|^{\gamma_2}
  \end{pmatrix}
\end{align*}
where $\gamma_{1,2}$ solve $\gamma_i(\gamma_i-1)=c b(t_0)$.

We further recall that $\det(S(t))$ is conserved due the vanishing trace in
equation \eqref{eq:9} and with the above comparison of the following size:
\begin{align*}
  \det(S)\sim \frac{\gamma_2-\gamma_1}{b(t_0)}.
\end{align*}
Hence, we may easily invert this matrix using Cramer's rule.

Plugging in this ansatz, we obtain that
\begin{align*}
  \dt  \begin{pmatrix}
    \alpha(t) \\\beta(t)
  \end{pmatrix}= S(t)^{-1}  \begin{pmatrix}
    0 & 0 \\
    b(t)-b(t_0) & 0
  \end{pmatrix} S(t) \begin{pmatrix}
    \alpha(t) \\\beta(t)
  \end{pmatrix}.
\end{align*}
We now consider the associated Duhamel integral and show that for $\nu$
sufficiently small it yields a small perturbation to the identity.
That is, we estimate
\begin{align*}
  & \quad \int_{t_0}^{t_0+\nu} \left\| S(t)^{-1}  \begin{pmatrix}
    0 & 0 \\
    b(t)-b(t_0) & 0
  \end{pmatrix} S(t)  \right\|_{op} dt \\
  &\leq  \frac{b_0}{\gamma_1-\gamma_2} 4c \nu \int_{t_0}^{t_0+\nu}\left\|
  \begin{pmatrix}
    |t|^{\gamma_2} & \frac{\gamma_2}{b(t_0)} |t|^{\gamma_2-1} \\
    -|t|^{\gamma_1}& -\frac{\gamma_1}{b(t_0)}|t|^{\gamma_1-1}
  \end{pmatrix}
                     \begin{pmatrix}
                       0 & 0 \\
                       1 & 0
                     \end{pmatrix}
           \begin{pmatrix}
    -\frac{\gamma_1}{b(t_0)}|t|^{\gamma_1-1} & -\frac{\gamma_2}{b(t_0)} |t|^{\gamma_2-1} \\
    |t|^{\gamma_1} & |t|^{\gamma_2}
  \end{pmatrix}                  
                     \right\|_{op}\\
  &= \frac{b_0}{\gamma_1-\gamma_2} 4c \nu \int_{t_0}^{t_0+\nu}\left\|
  \begin{pmatrix}
    0 & \frac{\gamma_2}{b(t_0)} |t|^{\gamma_2-1} \\
   0 & -\frac{\gamma_1}{b(t_0)}|t|^{\gamma_1-1}
  \end{pmatrix}
                     \begin{pmatrix}
                       0 & 0 \\
                       1 & 0
                     \end{pmatrix}
           \begin{pmatrix}
    -\frac{\gamma_1}{b(t_0)}|t|^{\gamma_1-1} & -\frac{\gamma_2}{b(t_0)} |t|^{\gamma_2-1} \\
    0 & 0
  \end{pmatrix}                  
                     \right\|_{op}.
\end{align*}
As $|t|<1$, we obtain an upper bound of the integrand by
\begin{align*}
  \frac{1}{b(t_0)^2} (\gamma_2^2 |t|^{2\gamma_2-2} + 2 \gamma_1 \gamma_2 |t|^{-1} + \gamma_1^2 |t|^{2\gamma_1-2})\\
  \leq  4 c^2 |t|^{-2} + 4|t|^{-1} + 4 c^{-2} |t|^{2\gamma_2-1}),
\end{align*}
where we used that $c\leq b(t_0)\leq 4 c$, $\gamma_1+\gamma_2=1$,
$\gamma_2\approx 4c^2$ (by Taylor's approximation).
Here, the last term is integrable in time, the middle term yields a logarithm
and the first term is estimated rather roughly.

We may hence control
\begin{align}
  \label{eq:11}
   \int_{t_0}^{t_0+\nu} \left\| S(t)^{-1}  \begin{pmatrix}
    0 & 0 \\
    b(t)-b(t_0) & 0
  \end{pmatrix} S(t)  \right\|_{op} \\
  \leq \frac{b_0}{\gamma_1-\gamma_2} 4c \nu \left(\frac{c^2}{t_0+\delta} + \ln(\frac{t_0+\delta}{t_0})+1\right).
\end{align}
Here we used that $t_0<t_0+\nu<0$ to bound the first integral by the larger
term. The case $0<t_0<t_0+\nu$ is switched accordingly.
We hence note that we cannot choose $t_0+\nu$ arbitrarily small, but rather
need to require that $\frac{t_0+\nu}{t_0}$ (respectively its reciprocal) is
not too large.
Choosing $\nu$ such that
\begin{align}
  \label{eq:12}
  t_0+\nu = \frac{1}{2} t_0,
\end{align}
we thus obtain that \eqref{eq:11} is bounded by $4c\ll 1$ and hence the map
\begin{align*}
  \begin{pmatrix}
    \alpha(t_0) \\\beta(t_0)
  \end{pmatrix} \mapsto
   \begin{pmatrix}
    \alpha(t_0+\nu) \\\beta(t_0+\nu)
  \end{pmatrix} 
\end{align*}
is comparable to the identity within a factor $2$, provided $\nu$ satisfies
\eqref{eq:12}.
It thus follows that $|u(t)|$ also only grows by a constant factor bounded by $100$ on the
interval $(t_0,t_0+\nu)=(t_0,t_0/2)$.
In order to establish the desired bound on all of $I_1=(t_0,\delta)$, we
partition our interval as
\begin{align*}
(t_0,t_0/2), (t_0/2,t_0/4), \dots, (2\delta,\delta).
\end{align*}
In order to reach a given $t \in I_1$, we then concatenate $|\ln(t/t_0)|/\ln(2)$
intervals and thus obtain that
\begin{align*}
  \|(u_1(t),u_2(t))\| \leq 100^{\ln(t/t_0)/\ln(2)}\|(u_1,u_2)(t_0)\| = |t/t_0|^{-\ln(100)/\ln(2)} \|(u_1,u_2)(t_0)\|.
\end{align*}
This concludes the proof of our rough upper bound with $\sigma=\ln(100)/\ln(2)$.
\end{proof}

\subsubsection{The interval $I_2$}
\label{sec:homogeneousintervals}

In order to study the evolution of \eqref{eq:homogeneous} on the middle interval
$I_2$ we use a different approach based on the convergent limit of iterated
Duhamel integration.
This method of proof also readily extends to the full (inhomogeneous) problem,
which is discussed in more detail in Proposition \ref{prop:middleinhom}.

\begin{lem}[Middle interval, homogeneous case]
\label{lem:middlehom}
  In the homogeneous case on the middle interval $I_2=(-\frac{d}{\xi},
  \frac{d}{\xi})$, $u_3$ is again conserved and
  \begin{align*}
    \begin{pmatrix}
      u_1(\frac{d}{\xi}) \\ u_2(\frac{d}{\xi})
    \end{pmatrix}
    \approx
    \begin{pmatrix}
      1 &  C c \xi \\
      \frac{2cd}{\xi} & 1
    \end{pmatrix}
     \begin{pmatrix}
      u_1(-\frac{d}{\xi}) \\ u_2(-\frac{d}{\xi}),
    \end{pmatrix}   
  \end{align*}
  where $C=\arctan(\xi t)|_{-\frac{d}{\xi}}^{\frac{d}{\xi}}= 2 \arctan(d)$.

  In particular, at time $t_2=+\frac{d}{\xi}$ it holds that
  \begin{align*}
    u_1(t_2)&\approx Cc \xi u_2(t_1) \approx 2C c^{1-\gamma_2} \xi^{1-\gamma_2} \frac{\gamma_1}{\gamma_2-\gamma_1} u_{2}(t_0), \\
    u_2(t_2)&\approx  u_2(t_1) \approx 2 c^{-\gamma_2} \xi^{-\gamma_2} \frac{\gamma_1}{\gamma_2-\gamma_1} u_2(t_0).
  \end{align*}
\end{lem}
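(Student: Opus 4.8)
The plan is to combine the reduction of Lemma~\ref{lem:reduction1} with a convergent Duhamel iteration for the resulting $2\times 2$ system, exploiting a separation of scales on $I_2$. By Lemma~\ref{lem:reduction1}, $u_3$ is conserved throughout the three-mode evolution, so on $I_2=(-d/\xi,d/\xi)$ it suffices to analyze the system~\eqref{eq:twomode1} for $(u_1,u_2)$ with coefficient matrix $\big(\begin{smallmatrix}0&a\\-b&0\end{smallmatrix}\big)$ and source $f=\big((b_1+b_2)u_3,\,(b_1-b_2)u_3\big)$. On $I_2$ with $d=c^{-1}$ and $\xi\gg c^{-1}$ one has $|t|\le d/\xi\ll 1$, so $b_1,b_2=c+\mathcal{O}(c|t|)$; hence $b=b_1+b_2\approx 2c$ is essentially constant with $\int_{I_2}b\,dt\approx 2cd/\xi$, while
\begin{align*}
\int_{I_2}a(t)\,dt=c\xi\big[\arctan(\xi t)\big]_{-d/\xi}^{d/\xi}=2c\xi\arctan(d)=Cc\xi,\qquad C:=2\arctan(d),
\end{align*}
and $|f(t)|\lesssim c|u_3(t_0)|$. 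The relevant feature is that $a$ is large near $t=0$ (of size $c\xi^2$) but integrates to only $Cc\xi$, whereas $b$ is genuinely small.

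I would then expand the homogeneous solution operator $U=U(d/\xi,-d/\xi)$ of $\dt u+\big(\begin{smallmatrix}0&a\\-b&0\end{smallmatrix}\big)u=0$ as the Picard series $U=\sum_{n\ge 0}(-1)^n\int_{-d/\xi<r_n<\cdots<r_1<d/\xi}A(r_1)\cdots A(r_n)$. Since $A$ is supported on the entries $(1,2)$ and $(2,1)$, every product string alternates factors of $a$ and $b$, odd orders feeding the off-diagonal and even orders the diagonal. The crucial---and most delicate---point is that although $\int_{I_2}a\sim c\xi\gg 1$ is itself large, the effective coupling constant for this anti-diagonal structure is the \emph{product} $\big(\int_{I_2}a\big)\big(\int_{I_2}b\big)\approx 2Cc^2 d=2Cc=\mathcal{O}(c)$, not $\int_{I_2}a$ alone. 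Concretely, in an ordered $(2k+1)$-fold product contributing to the $(1,2)$ entry one bounds the $k$ factors of $b$ by $\|b\|_{L^\infty(I_2)}\le 2c$, pulls out the $k$ intermediate integrations (each over a subinterval of length $\le|I_2|=2d/\xi$), and is left with $k+1$ ordered factors of $a$, giving a term of size $\lesssim(Cc\xi)(2Cc)^k/k!$; summing over $k$ shows the $(1,2)$ entry equals $\int_{I_2}a\cdot(1+\mathcal{O}(c))$. The same bookkeeping gives the $(2,1)$ entry $=\int_{I_2}b\cdot(1+\mathcal{O}(c))$ and diagonal entries $1+\mathcal{O}(c)$, so $U\approx\big(\begin{smallmatrix}1&Cc\xi\\2cd/\xi&1\end{smallmatrix}\big)$.

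The inhomogeneity is treated exactly as in Lemma~\ref{lem:particular_solution}: convolving $U$ against $f$ over the short interval $|I_2|=2d/\xi$ produces a particular solution of size $\lesssim c|u_3(t_0)|$ times the entries of $U$, which is lower order than the homogeneous contribution---and vanishes identically in the echo-chain application, where $u_3(t_0)=0$---hence absorbed into the $\approx$; this gives the matrix identity. For the ``In particular'' statement I would insert the $t=-d/\xi$ data from Proposition~\ref{prop:lefthom}: there $u_1(-d/\xi)(d/\xi)\approx c\,u_2(-d/\xi)$, i.e.\ $u_1(-d/\xi)\approx c^2\xi\,u_2(-d/\xi)$, so that $Cc\xi\,u_2(-d/\xi)\gg u_1(-d/\xi)$ and $\frac{2cd}{\xi}u_1(-d/\xi)\ll u_2(-d/\xi)$; hence $u_1(d/\xi)\approx Cc\xi\,u_2(-d/\xi)$ and $u_2(d/\xi)\approx u_2(-d/\xi)$, and substituting $u_2(-d/\xi)\approx(d/\xi)^{\gamma_2}\frac{\gamma_1}{\gamma_2-\gamma_1}u_2(t_0)=c^{-\gamma_2}\xi^{-\gamma_2}\frac{\gamma_1}{\gamma_2-\gamma_1}u_2(t_0)$ from Proposition~\ref{prop:lefthom} yields the stated asymptotics (the $u_2(t_1)$ in the statement being $u_2(-d/\xi)$).

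I expect the main obstacle to be precisely the convergence step above: one must look past the fact that $\int_{I_2}a$ is large and recognize that the genuine coupling is the small product $\big(\int_{I_2}a\big)\big(\int_{I_2}b\big)$, which relies on both the off-diagonal matrix structure and the scale separation between the concentrated coefficient $a$ and the small, slowly varying coefficient $b$. The coefficient estimates, the series bookkeeping, and the final substitution are otherwise routine.
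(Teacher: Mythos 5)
Your proposal is correct and follows essentially the same route as the paper: the paper also proves this via a Duhamel/Picard expansion in which the first iterate is dominant and higher iterates are summed by pairing each large resonant factor $\int_{I_2}a\sim Cc\xi$ with a small non-resonant one $\sim cd/\xi$ (phrased there as the path-counting inequality $j_1\ge j_2$ in the proof of Proposition~\ref{prop:middleinhom}, to which Lemma~\ref{lem:middlehom} defers), followed by the same substitution of the Proposition~\ref{prop:lefthom} asymptotics. Your only deviation is organizational --- you run the iteration directly in the reduced $(u_1,u_2)$ system of Lemma~\ref{lem:reduction1} and absorb the $u_3$-forcing as in Lemma~\ref{lem:particular_solution} rather than citing the inhomogeneous-case lemmas --- which is harmless.
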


\begin{proof}
  An explicit solution can be given in terms of hypergeometric functions,
  as we explore in the Appendix \ref{sec:special}.
  A more useful and shorter proof for our purposes considers an expansion in
  terms the Duhamel iteration.
  Here, it turns out that the first Duhamel
  iteration is dominant and all higher order Duhamel iterations can be estimated
  in a geometric series.
  Since the statement of the inhomogeneous case includes this result as a
  special case, in order to avoid duplication we refer to the proof of
  Proposition \ref{prop:middleinhom} for details.
  Concerning the asymptotics, we recall from Proposition \ref{prop:lefthom} that
  \begin{align*}
    u_1(t_1) &\approx c^{2-\gamma_2} \xi^{1-\gamma_2} u_2(t_0), \\
    u_2(t_1)& \approx c^{-\gamma_2} \xi^{-\gamma_2} u_2(t_0).
  \end{align*}
  Hence, it follows that
  \begin{align*}
       &\quad \begin{pmatrix}
      1 &  C c \xi \\
      \frac{2cd}{\xi} & 1
    \end{pmatrix}
     \begin{pmatrix}
      u_1(t_1) \\ u_2(t_1),
    \end{pmatrix}   \\
    &\approx u_2(t_0)
      \begin{pmatrix}
        1 & cC \xi \\
        \frac{2}{\xi} & 1
      \end{pmatrix}
                        \begin{pmatrix}
                          c^{2-\gamma_2} \xi^{1-\gamma_2} \\
                          2 c^{-\gamma_2}
                        \end{pmatrix} \\
       & \approx  u_2(t_0)
         \begin{pmatrix}
           2C c^{1-\gamma_2} \xi^{1-\gamma_2} \\
           2 c^{-\gamma_2} \xi^{-\gamma_2}
         \end{pmatrix}.
  \end{align*}
\end{proof}

\subsubsection{The interval $I_3$}
\label{sec:i3}

It remains to discuss the evolution on the interval $I_3=(\frac{d}{\xi},t_1)$.
\begin{lem}[Right interval, homogeneous case]
  \label{lem:righthom}
  Let $\xi \gg 1$ and $0<c <0.2$ be given. We consider the problem
  \eqref{eq:homogeneous} on the interval $I_3=(\frac{d}{\xi},t_1)$
  Then under the assumptions of Theorem \ref{thm:homogeneous_full} it holds that
  \begin{align*}
    u_1(t_1)&\approx c^{-1-\gamma_2} \xi^{-\gamma_2}u_{1}(\frac{d}{\xi}) - c^{1-\gamma_2} \xi^{1-\gamma_2} u_2(\frac{d}{\xi})\approx \xi^{\gamma} c^{1-2\gamma_2} \xi^\gamma u_2(t_0) \\
    u_2(t_1)& \approx 2c^{1-\gamma_2} \xi^{-\gamma_2}u_1(\frac{d}{\xi}) - 2c^{2-\gamma_2} \xi^{1-\gamma_2} u_2(\frac{d}{\xi}) \approx \xi^{\gamma} c^{2-2\gamma_2} u_2(t_0), \\
    u_3(t_1)&=u_3(t_0).
  \end{align*}
\end{lem}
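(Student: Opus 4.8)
The plan is to handle $I_3=(\tfrac{d}{\xi},t_1)$ by running, in the opposite time direction, exactly the argument that Section~\ref{sec:upperandasymptotics} carries out on $I_1$, and then to compose the resulting interval map with those of Proposition~\ref{prop:lefthom} (on $I_1$) and Lemma~\ref{lem:middlehom} (on $I_2$). First I would record that on $I_3$ one has $t\ge\tfrac{d}{\xi}=c^{-1}\xi^{-1}\gg\xi^{-1}$, so $\xi^{-2}\ll t^{2}$ and hence $a(t)=\tfrac{c}{\xi^{-2}+t^{2}}\approx\tfrac{c}{t^{2}}$ with $b(t)=2c+\mathcal{O}(ct)$, just as on $I_1$. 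Consequently the ``analogous result on $I_3$'' already invoked in Lemmas~\ref{lem:improvement}, \ref{lem:particular_solution} and~\ref{lem:roughupperbound} supplies a fundamental matrix $S(t)$ for the homogeneous two-mode system~\eqref{eq:9} whose entries are comparable (up to $c$-dependent constants) to the power laws $|t|^{\gamma_1-1},|t|^{\gamma_2-1}$ in the first row and $|t|^{\gamma_1},|t|^{\gamma_2}$ in the second, with $\det S$ conserved (the coefficient matrix being trace-free) and $\approx\gamma/c$. The identity $u_3(t_1)=u_3(t_0)$ is immediate from the conservation of $u_3$ in Lemma~\ref{lem:reduction1}, and by the $I_3$-version of Lemma~\ref{lem:particular_solution} the particular solution generated by the $u_3$-forcing is $\mathcal{O}(c|u_3(t_0)|)$ times the same power laws; under the hypotheses of Theorem~\ref{thm:homogeneous_full} this is dominated by the homogeneous part (since $|u_3(t_0)|\lesssim|u_2(t_0)|$ while, by Lemma~\ref{lem:middlehom}, $u_1(\tfrac d\xi),u_2(\tfrac d\xi)$ are already far larger than $u_2(t_0)$), so it suffices to propagate~\eqref{eq:9}.

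Writing $u(t)=S(t)(\alpha(t),\beta(t))^{\top}$ and invoking the variation-of-constants estimate of Lemma~\ref{lem:improvement} on $I_3$, the perturbation $b(t)-2c=\mathcal{O}(ct)$ keeps $(\alpha,\beta)$ within a factor close to $1$ of its value at $t=\tfrac d\xi$ throughout $I_3$. It then remains to (i) compute $(\alpha,\beta)|_{\tfrac d\xi}=S(\tfrac d\xi)^{-1}(u_1,u_2)|_{\tfrac d\xi}$ from the explicit inverse, and (ii) evaluate $u(t_1)=S(t_1)(\alpha,\beta)|_{t_1}$, noting that $t_1=\tfrac12\tfrac{k_0}{k_0-1}=\mathcal{O}(1)$, so every power-law entry of $S(t_1)$ is $\mathcal{O}(1)$ and each $u_j(t_1)$ is a fixed linear combination of $\alpha(\tfrac d\xi)$ and $\beta(\tfrac d\xi)$. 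Feeding in the middle-interval output of Lemma~\ref{lem:middlehom}, namely $u_1(\tfrac d\xi)\approx 2\arctan(d)\,c\xi\,u_2(\tfrac d\xi)$ together with the sizes $u_1(\tfrac d\xi)\approx c^{1-\gamma_2}\xi^{1-\gamma_2}u_2(t_0)$ and $u_2(\tfrac d\xi)\approx c^{-\gamma_2}\xi^{-\gamma_2}u_2(t_0)$, and simplifying via $(\tfrac d\xi)^{\gamma_2}=c^{-\gamma_2}\xi^{-\gamma_2}$, $(\tfrac d\xi)^{\gamma_1-1}=(\tfrac d\xi)^{-\gamma_2}=c^{\gamma_2}\xi^{\gamma_2}$ (using $\gamma_1+\gamma_2=1$) and $1-2\gamma_2=\gamma$, yields the two displayed linear combinations for $u_1(t_1),u_2(t_1)$ and the $\xi^{\gamma}$-type growth; one then checks these are consistent with the per-resonance factor $c^{2-2\gamma_2}\xi^{\gamma}$ obtained by composing $I_1\!\to\! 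I_2\!\to\! I_3$ in Theorem~\ref{thm:homogeneous_full}.

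I expect the main obstacle to be purely the bookkeeping in (i)--(ii): deciding, at each of the two matrix multiplications, which entry dominates. Because $\gamma_2=\tfrac12-\sqrt{\tfrac14-2c^2}=\mathcal{O}(c^2)$, the relevant powers of $c$ are all near integer powers, and one must check that factors such as $(\tfrac d\xi)^{2\gamma_2}=\exp(2\gamma_2\ln\tfrac d\xi)$ stay within bounded multiples of $1$ in the regime $\xi\gg c^{-1}$, while the genuinely large powers of $\xi$ produce the inflation. The structural fact that removes the ambiguity is the relation $u_1(\tfrac d\xi)\cdot\tfrac d\xi\approx 2\arctan(d)\,u_2(\tfrac d\xi)$ inherited from Lemma~\ref{lem:middlehom}: it fixes $u_1(\tfrac d\xi)/u_2(\tfrac d\xi)\approx c\xi$ up to a constant, which forces the $u_1(\tfrac d\xi)$-term to win in $\alpha(\tfrac d\xi)$ and the $u_2(\tfrac d\xi)$-term to win in $\beta(\tfrac d\xi)$, after which $u_1(t_1)\sim c^{-1}\alpha(\tfrac d\xi)$ is the dominant output. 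All the analytic input---the power-law bounds on $S$, the conserved determinant, the perturbativity of the $b$-correction and of the $u_3$-forcing---is already established for $I_1$ and transfers verbatim to $I_3$ after relabelling $t_0\leftrightarrow t_1$ and $-\tfrac d\xi\leftrightarrow\tfrac d\xi$.
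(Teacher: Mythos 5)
Your proposal is correct and follows essentially the same route as the paper's own proof: reduce to the two-mode system via conservation of $u_3$ (treating the $u_3$-forcing as a perturbative particular solution), reuse the $I_1$ power-law fundamental system and conserved Wronskian in reversed time on $I_3$, and compose $S(t_1)S(\tfrac{d}{\xi})^{-1}$ with the middle-interval output of Lemma \ref{lem:middlehom}, using $u_1(\tfrac{d}{\xi})\approx Cc\xi\,u_2(\tfrac{d}{\xi})$ to settle which entries dominate. The only caveat is in the final bookkeeping of powers of $c$ (constants in $\xi$), where your accounting is no less careful than the paper's own computation, so this is not a substantive gap.
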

  We note that $u_1(\frac{d}{\xi})$ is multiplied by a negative power
  of $\xi$ while $u_2(\frac{d}{\xi})$ is multiplied with a positive power of $\xi$.
\begin{proof}[Proof of Lemma \ref{lem:righthom}]
  This proof is largely analogous to the one of Proposition \ref{prop:lefthom}
  in the sense that the inverse solution operator $u(t_1)\mapsto
  u(\frac{d}{\xi})$ satisfies the same (asymptotic) estimates.

  As a first heuristic let us again consider the approximated two-mode model,
  where
  \begin{align*}
    \begin{pmatrix}
      u_1(\frac{d}{\xi}) \\
      u_2(\frac{d}{\xi})
    \end{pmatrix}
    \approx
    \begin{pmatrix}
      (\frac{d}{\xi})^{\gamma_1-1} & (\frac{d}{\xi})^{\gamma_2-1}\\
       \frac{\gamma_2}{c}(\frac{d}{\xi})^{\gamma_1} & \frac{\gamma_1}{c}(\frac{d}{\xi})^{\gamma_2} 
     \end{pmatrix} S(t_1)^{-1}
     \begin{pmatrix}
       u_1(t_1) \\ u_2(t_1)
     \end{pmatrix} \\
     \Leftrightarrow
          \begin{pmatrix}
       u_1(t_1) \\ u_2(t_1)
     \end{pmatrix} = S(t_1)
    \begin{pmatrix}
      (\frac{d}{\xi})^{\gamma_1-1} & (\frac{d}{\xi})^{\gamma_2-1}\\
       \frac{\gamma_2}{c}(\frac{d}{\xi})^{\gamma_1} & \frac{\gamma_1}{c}(\frac{d}{\xi})^{\gamma_2} 
     \end{pmatrix}^{-1}
     \begin{pmatrix}
      u_1(\frac{d}{\xi}) \\
      u_2(\frac{d}{\xi})
    \end{pmatrix}.
  \end{align*}
  Now note that
  \begin{align*}
    \begin{pmatrix}
       (\frac{d}{\xi})^{\gamma_1-1} & (\frac{d}{\xi})^{\gamma_2-1}\\
       \frac{\gamma_2}{c}(\frac{d}{\xi})^{\gamma_1} & \frac{\gamma_1}{c}(\frac{d}{\xi})^{\gamma_2} 
     \end{pmatrix}^{-1}
    = \frac{c}{\gamma_1-\gamma_2}
    \begin{pmatrix}
      \frac{\gamma_1}{c}(\frac{d}{\xi})^{\gamma_2}  & - (\frac{d}{\xi})^{\gamma_2-1} \\
      - \frac{\gamma_2}{c}(\frac{d}{\xi})^{\gamma_1} & (\frac{d}{\xi})^{\gamma_1-1} 
    \end{pmatrix}
  \end{align*}
  Thus, in this model we may compute  
  \begin{align*}
    \begin{pmatrix}
      u_1(t_1)\\ u_2(t_1)
    \end{pmatrix}
    &=
    \begin{pmatrix}
      t_1^{\gamma_1} & t_1^{\gamma_2} \\
      \frac{\gamma_2}{c}t_1^{\gamma_1-1} & \frac{\gamma_1}{c} t_1^{\gamma_2-1}
    \end{pmatrix}
                    \begin{pmatrix}
      \frac{\gamma_1}{c}(\frac{d}{\xi})^{\gamma_2}  & - (\frac{d}{\xi})^{\gamma_2-1} \\
      - \frac{\gamma_2}{c}(\frac{d}{\xi})^{\gamma_1} & (\frac{d}{\xi})^{\gamma_1-1} 
    \end{pmatrix}
                                                         \begin{pmatrix}
                                                           u_1(\frac{d}{\xi})\\ u_2(\frac{d}{\xi})
                                                         \end{pmatrix}\\
    &\approx
    \begin{pmatrix}
      \frac{\gamma_1}{c}(\frac{d}{\xi})^{\gamma_2} & -(\frac{d}{\xi})^{\gamma_2-1} \\
      2c (\frac{d}{\xi})^{\gamma_2} & -\frac{\gamma_2}{c} (\frac{d}{\xi})^{\gamma_2-1}
    \end{pmatrix}
    \begin{pmatrix}
      u_1(\frac{d}{\xi})\\ u_2(\frac{d}{\xi})
    \end{pmatrix}\\
    &=\begin{pmatrix}
      \frac{\gamma_1}{c}d^{\gamma_2} & -d^{\gamma_2-1} \\
      2c \ d^{\gamma_2} & -\frac{\gamma_2}{c} d^{\gamma_2-1}
    \end{pmatrix}
    \begin{pmatrix}
      \xi^{-\gamma_2} u_1(\frac{d}{\xi})\\ \xi^{1-\gamma_2}u_2(\frac{d}{\xi})
    \end{pmatrix}\\ 
    &\approx \begin{pmatrix}
      \frac{\gamma_1}{c}d^{\gamma_2} & -d^{\gamma_2-1} \\
      2c \ d^{\gamma_2} & -\frac{\gamma_2}{c} d^{\gamma_2-1}
    \end{pmatrix}
                          \begin{pmatrix}
                            2C c^{1-\gamma_2} \frac{\gamma_1}{\gamma_2-\gamma_1} \\
                            2 c^{-\gamma_2} \frac{\gamma_1}{\gamma_2-\gamma_1}
                          \end{pmatrix}
\xi^{\gamma}
   u_2(t_0),
  \end{align*}
  where we considered smaller powers of $\xi$ as errors and denoted
  $\gamma=1-2\gamma_2=2\sqrt{\frac{1}{4}-2c^2}=\sqrt{1-8c^2}$.
  Choosing $d=c^{-1}$ and recalling that $\gamma_2\approx 2c^2$ by Taylor's
  approximation, $\gamma_1 \approx \gamma \approx 1$ we may further approximate:
  \begin{align*}
    & \quad \begin{pmatrix}
      \frac{\gamma_1}{c}d^{\gamma_2} & -d^{\gamma_2-1} \\
      2c \ d^{\gamma_2} & -\frac{\gamma_2}{c} d^{\gamma_2-1}
    \end{pmatrix}
                          \begin{pmatrix}
                            2C c^{1-\gamma_2} \frac{\gamma_1}{\gamma_2-\gamma_1} \\
                            2c^{-\gamma_2} \frac{\gamma_1}{\gamma_2-\gamma_1}
                          \end{pmatrix} \\
    &\approx
    \begin{pmatrix}
      c^{-1-\gamma_2} & -c^{1-\gamma_2} \\
      2 c^{1-\gamma_2} & - 2c^{2-\gamma_2}
    \end{pmatrix}
                         \begin{pmatrix}
                           2C c^{1-\gamma_2} \\
                           2c^{-\gamma_2}
                         \end{pmatrix}\\
&=
    \begin{pmatrix}
      C c^{2\gamma_2} - 2 c^{1-2\gamma_2} \\
      2C c^{2-2\gamma_2} - 4 c^{2-2 \gamma_2}
    \end{pmatrix}
    \approx
    \begin{pmatrix}
      -2 c^{1-2 \gamma_2} \\
      \tilde{C} c^{2-2 \gamma_2}
    \end{pmatrix}.
  \end{align*}
  In particular, we stress that $u_1(t_1)\geq c^{-1} u_{2}(t_1)$ is much bigger
  provided $c$ is sufficiently small.

  It remains to discuss the extension to full problem.
  We note that under the transformation $(u_1(t),u_2(t),u_3(t))\mapsto
  (-u_1(-t),u_2(-t),u_3(-))$ we obtain an analogous problem as the one
  considered on $I_1$ in Section \ref{sec:upperandasymptotics} except that $t_0$
  is replaced by $-t_1$ and $b(t)$ is replaced by $b(-t)$.
  By the same arguments as in Section \ref{sec:upperandasymptotics} it hence
  follows that the solution operator $u(t):=S(t)u(t_1)$ is of the form 
  \begin{align*}
    S=
    \begin{pmatrix}
      S' &
      \begin{matrix}
        b \\ c
      \end{matrix}\\
      \begin{matrix}
        0 & 0
      \end{matrix}
& 1
    \end{pmatrix},
  \end{align*}
  where $S'$ is asymptotically well-approximated by our two-mode model heuristic.

  We may then explicitly compute the inverse of $S$ as 
  \begin{align*}
    S(t)^{-1}=
        \begin{pmatrix}
      S'^{-1} &
      \begin{matrix}
        \tilde{b} \\ \tilde{c}
      \end{matrix}\\
      \begin{matrix}
        0 & 0
      \end{matrix}
& 1
\end{pmatrix}, \\
    \begin{pmatrix}
      \tilde{b} \\ \tilde{c}
    \end{pmatrix}
    = - S'^{-1}
    \begin{pmatrix}
      b \\ c
    \end{pmatrix},
  \end{align*}
  where we can use Cramer's rule for inverting $S$.
  As we know that $S(\frac{d}{\xi})$ is well-approximated by the power laws the
  result for the full model hence follows by the exact same argument as we
  discussed for the approximate model above.
 \end{proof}

In the following Section \ref{sec:inhomogeneous} we use multiple bootstrap arguments to show that this
behavior persists in the full problem.
\subsection{The Full Model}
\label{sec:inhomogeneous}
\begin{thm}[Summary]
  \label{thm:summary}
  Let $c \in \R$ with $0<|c|<0.1$ and let $\xi \gg 1$.
  Let $u$ be the solution of the full model \eqref{eq:13}:
  \begin{align*}
    \dt u(k)+ a(k+1) u(k+1) - a(k-1)u(k-1)=0,
  \end{align*}
  on $(t_0,t_1)$.
    Suppose that at time $t_0$ it holds that $u(k_0)\geq 0.5 \max|u|=:0.5
    \theta$ and define $\gamma=\sqrt{1-8c^2}$.
    Then there exists a constant $C>0$ such that it holds that at time $t=1$
    \begin{align*}
      |u(k)| \leq C \xi^{\gamma} \theta
    \end{align*}
    for all $k$ and
    \begin{align*}
       |u(k_0- 1)| \geq \frac{C}{2} \xi^{\gamma} \theta.
    \end{align*}
    In particular, this theorem can be applied again.
\end{thm}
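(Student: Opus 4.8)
The plan is to deduce Theorem~\ref{thm:summary} from the single-resonance analysis of the homogeneous three-mode model (Theorem~\ref{thm:homogeneous_full}) by showing, via a bootstrap argument, that the coupling to the remaining modes $k_0\pm2,k_0\pm3,\dots$ enters only as a controllable perturbation, and then iterating the resulting single-resonance estimate along the chain $k_0\mapsto k_0-1\mapsto\cdots\mapsto 1$. As in Section~\ref{sec:setup} one works on the resonance interval $(t_0,t_1)$ around $\tau=\tfrac1{k_0}$, split as $I_1=(t_0,-\tfrac d\xi)$, $I_2=(-\tfrac d\xi,\tfrac d\xi)$, $I_3=(\tfrac d\xi,t_1)$ with $d=c^{-1}$; the global-in-time conclusion then follows by combining with Theorem~\ref{thm:LWP} (to reach the first resonant time from $\tau=0$) and Proposition~\ref{prop:largetime} (for $\tau\ge2$), exactly as in the proof of Corollary~\ref{cor:homogeneous_chain}. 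The bootstrap is run in the weighted quantity $\|u(t)\|_\ast:=\sup_k c^{-|k-k_0|}|u(t,k)|$, for which one postulates the a priori bound $\|u(t)\|_\ast\le K\,\Phi(t)\,\theta$ on all of $(t_0,t_1)$, where $\Phi$ is the leading power-law/Duhamel profile of the three-mode model and $K$ is a large absolute constant.

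On $I_1$ and $I_3$ I would reuse the reduction of Lemma~\ref{lem:reduction1} and the variation-of-constants scheme of Lemma~\ref{lem:improvement} and Lemma~\ref{lem:righthom}, now carrying the extra inhomogeneity $a(k_0\pm2)\,u(k_0\pm2)$ on the right-hand side. Since $|a(k)|\le 4c$ for every $k\ne k_0$, the bootstrap hypothesis bounds this forcing by $c$ times the size of $u(k_0\pm1)$, and because the inverse solution operator obeys $|S(t)^{-1}|\lesssim|t|^{\gamma_2-1}$, which is integrable in $t$, the corresponding Duhamel term is controlled by a further factor of $c$ times $\Phi$, improving the constant $K$ once $|c|$ is small. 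For the outer modes $k$ with $|k-k_0|\ge2$ one integrates $\dt u(k)=-a(k+1)u(k+1)+a(k-1)u(k-1)$ directly; all coefficients being $\lesssim c$, the path-counting estimate of Proposition~\ref{prop:resonance} reproduces the geometric weight $c^{|k-k_0|}$, so $\|u\|_\ast$ stays under control and no outer mode can affect the three central modes to leading order. Establishing the a priori power-law bound underlying the $I_1/I_3$ analysis in the presence of these inhomogeneities (the analogue of Lemma~\ref{lem:roughupperbound}) is one of the technically heavy ingredients.

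On the middle interval $I_2$ I would use the inhomogeneous analogue of Lemma~\ref{lem:middlehom} (the convergent iterated-Duhamel argument, carried out in detail in Proposition~\ref{prop:middleinhom}): the first iterate produces a transfer matrix with unit diagonal and off-diagonal entries $\approx 2c\arctan(d)\,\xi$ and $\approx 2cd/\xi$, while the higher iterates together with the contributions of the modes $k_0\pm2$ sum in a geometric series of ratio $O(c\arctan d)<1$ --- this is where it matters that the small parameter is $c$, not $\xi^{-1}$. Concatenating the three solution operators exactly as in Theorem~\ref{thm:homogeneous_full} gives at time $t_1$ the bounds $|u(t_1,k)|\le C\xi^\gamma\theta$ for all $k$ and $|u(t_1,k_0-1)|\ge \tfrac C2\xi^\gamma\theta$, with every other mode smaller by at least a factor $c$; in particular the new maximum is attained, up to a factor $2$, at the mode $k_0-1$, so the hypothesis of Theorem~\ref{thm:summary} holds again with $k_0$ replaced by $k_0-1$ and the iteration closes, yielding the chain as in Corollary~\ref{cor:homogeneous_chain}.

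The main obstacle is closing the bootstrap \emph{uniformly in $\xi$} (arbitrarily large here, not restricted by the logarithmic condition of Section~\ref{sec:chains}) and \emph{uniformly in the mode index}. The delicate points: on $I_2$ near $t=0$ the coefficient $a(k_0)\sim c\xi^2$ is huge, so one must verify that the Duhamel series for the three-mode part still converges and that the neglected modes are not amplified by it --- both hold because $a(k_0)$ only enters through its primitive $\int a(k_0)\,dt = 2c\arctan d = O(c)$; and throughout one must track signs carefully so that the \emph{lower} bound $|u(t_1,k_0-1)|\ge\tfrac C2\xi^\gamma\theta$, not merely the upper bound, survives, i.e.\ the $O(c)$ contributions of the outer modes genuinely cannot conspire to cancel the leading $\xi^\gamma$ term. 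Once these estimates are in place, the stated upper bound for all $k$, the matching lower bound at $k_0-1$, and hence the reiterability of the theorem follow.
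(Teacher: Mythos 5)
Your overall architecture coincides with the paper's: the same splitting into $I_1=(t_0,-\tfrac d\xi)$, $I_2=(-\tfrac d\xi,\tfrac d\xi)$, $I_3=(\tfrac d\xi,t_1)$ with $d=c^{-1}$, variation of constants on $I_1,I_3$ with the modes $k_0\pm2$ entering as an inhomogeneity, the iterated Duhamel/path-counting argument on $I_2$, and concatenation --- this is exactly what Propositions \ref{prop:leftinhom}, \ref{prop:middleinhom} and the right-interval proposition do. However, your central technical device, running the bootstrap in $\|u(t)\|_\ast=\sup_k c^{-|k-k_0|}|u(t,k)|$, does not work under the hypotheses of the theorem: at $t=t_0$ the only information is $u(k_0)\geq 0.5\max|u|$, so modes arbitrarily far from $k_0$ may be of size $\theta$ and $\|u(t_0)\|_\ast$ is in general infinite; moreover, after one application of the theorem the conclusion is again only the uniform bound $C\xi^{\gamma}\theta$ on all modes, so no geometric localization is available at the next resonance either. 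Geometric off-diagonal decay of the solution operator does not restore decay of the solution when the data has none. The paper instead runs the bootstrap with uniform-in-$k$ bounds of exponential type for the outer modes together with an $l^2$ bound for $|k-k_0|\geq 2$ (estimates \eqref{eq:boundary}, \eqref{eq:boundaryL2} in Proposition \ref{prop:leftinhom}) and tracks only the three central modes precisely; you would need to weaken your weighted norm to essentially that formulation for the argument to initialize and to close.

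Two smaller points. First, your justification of convergence on $I_2$, namely that ``$a(k_0)$ only enters through its primitive $\int a(k_0)\,dt=2c\arctan d=O(c)$'', is off by a factor $\xi$: the primitive is $2c\xi\arctan d\gg 1$, which is precisely why $u(k_0\pm1)$ grows by the factor $\approx c\xi$ there. The correct mechanism, which you do state earlier and which is the content of Lemma \ref{lem:res}, is that every return to the resonant mode must be preceded by a non-resonant step of weight $O(\tfrac{cd}{\xi})$, so resonant and non-resonant factors pair up to the effective ratio $8c^{2}d\arctan d=O(c)$. Second, at time $t_1$ it is not true that every mode other than $k_0-1$ is smaller by a factor $c$: $u(k_0+1)$ is comparable to $u(k_0-1)$, since both equal $u_3\pm u_1$ with $u_1$ dominant and $u_3$ conserved; this does not affect re-applicability, because the hypothesis only requires the new resonant mode to carry at least half of the maximum.
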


\subsubsection{The interval $I_1$}

\begin{prop}[Left interval, inhomogeneous case]
  \label{prop:leftinhom}
   Let $c \in \R$ with $0<|c|<0.1$ and let $\xi \gg 1$.
   Let $u$ be as in Theorem \ref{thm:summary}.
   Then at $-\frac{d}{\xi}=-\frac{1}{c\xi}$ it holds that:
   \begin{align}
     u(k_0) &\approx \theta \ c \ (\frac{d}{\xi})^{\gamma_2}, \\
     u(k_0\pm 1) &\approx \theta\  c \ (\frac{d}{\xi})^{\gamma_2-1}, \\
     u(k) &\leq C \theta,
   \end{align}
   where $C=(e^{c}-1)$
 \end{prop}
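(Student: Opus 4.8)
The plan is to regard the full evolution \eqref{eq:13} on $I_1=(t_0,-\tfrac{d}{\xi})$ as the homogeneous three-mode problem of Proposition \ref{prop:lefthom} for the modes $k_0-1,k_0,k_0+1$, driven by a small forcing generated by all remaining modes, and to close the estimate by a bootstrap. First I would perform the reduction of Lemma \ref{lem:reduction1}: writing $u_1=\tfrac12(u(k_0-1)-u(k_0+1))$, $u_2=u(k_0)$, $u_3=\tfrac12(u(k_0-1)+u(k_0+1))$, in the full model $u_3$ is no longer exactly conserved but satisfies $\dt u_3=\tfrac12\big(a(k_0-2)u(k_0-2)-a(k_0+2)u(k_0+2)\big)$, while $(u_1,u_2)$ solves the two-mode system \eqref{eq:twomode1} with $f=\big((b_1+b_2)u_3+a(k_0+2)u(k_0+2),\ (b_1-b_2)u_3-a(k_0-2)u(k_0-2)\big)^{\mathsf T}$, i.e.\ exactly the inhomogeneity of Lemma \ref{lem:particular_solution} plus two extra terms coupling to the neighbouring modes $k_0\pm 2$. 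Since we sit inside a single resonant window, $|a(k)|\le 4c$ for every $k\neq k_0$ on $(t_0,t_1)$, and moreover $\int_{I_1}|a(k)|\,ds\approx c$, so every new term is genuinely $O(c)$. Write $u_{\mathrm{tail}}$ for the collection $(u(k))_{|k-k_0|\ge 2}$.

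The bootstrap hypotheses on $I_1$ are: (i) the central modes obey the Proposition \ref{prop:lefthom} power laws $|u(k_0)(t)|\le K\theta|t|^{\gamma_2}$, $|u(k_0\pm1)(t)|\le K\theta|t|^{\gamma_2-1}$ for a fixed absolute $K$; and (ii) $\|u_{\mathrm{tail}}(t)\|_\infty\le(e^{c}-1)\theta$. At $t=t_0$ these hold since $|t_0|\approx\tfrac12$, $u(k_0)(t_0)\ge 0.5\theta$ and $\max|u(t_0)|=\theta$. To close (ii) I would integrate the scalar equations $\dt u(k)=-a(k+1)u(k+1)+a(k-1)u(k-1)$ directly: a Gr\"onwall estimate over $I_1$, using $\int_{I_1}|a(k)|\,ds\approx c$, bounds the tail at any $t\in I_1$ by its initial value plus $(e^{c}-1)$ times $\sup_{I_1}\int|a(k_0\pm1)\,u(k_0\pm1)|$, and this source is finite of size $O(\theta)$ because, by (i), $\int_{I_1}|u(k_0\pm1)|\,ds\le K\theta\int_{d/\xi}^{|t_0|}|s|^{\gamma_2-1}\,ds\le\tfrac{K\theta}{\gamma_2}$ is bounded by a constant depending only on $c$ — this is the step where $\gamma_2>0$ is essential — while the dominant $|s|^{\gamma_2-1}$ part of $u(k_0\pm1)$ carries only a coefficient $\approx c\,u_2(t_0)$.

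To close (i) I would rerun the proof of Proposition \ref{prop:lefthom} essentially verbatim with $f$ in place of the two-mode forcing. Lemma \ref{lem:roughupperbound} gives the crude a priori bound $|u_1(t)|\le C|t|^{-\sigma}$ on the central block; then the variation-of-constants ansatz $u=S(t)(\alpha,\beta)^{\mathsf T}$ of Lemma \ref{lem:improvement}, with $S$ the Legendre-function solution operator comparable to $|t|^{\gamma_1-1},|t|^{\gamma_2-1}$ in the $u_1$ row and to $\tfrac{\gamma_2}{c}|t|^{\gamma_1},\tfrac{\gamma_1}{c}|t|^{\gamma_2}$ in the $u_2$ row, gives $(\alpha,\beta)(t)-(\alpha,\beta)(t_0)=\int_{t_0}^tS^{-1}f$, which is $\lesssim C\int_{I_1}|s|^{\gamma_2-1}\,c\,(|u_3|+\|u_{\mathrm{tail}}\|_\infty)\,ds\lesssim c\theta$ because $|S^{-1}|\lesssim|s|^{\gamma_2-1}$ is integrable and $u_3$ is (almost) conserved; the error $b(t)-2c=O(c|t|)$ is absorbed exactly as in Lemma \ref{lem:improvement}. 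Thus the central modes follow the Proposition \ref{prop:lefthom} power laws up to a \emph{relative} $O(c)$ perturbation, which improves (i) for $c$ small. Evaluating $S(-\tfrac{d}{\xi})(\alpha,\beta)^{\mathsf T}$ with $d=c^{-1}$, and using $u_2(t_0)\ge 0.5\theta$ to see that $\beta\approx c\,u_2(t_0)$ is the dominant coefficient, yields $u_2(-\tfrac{d}{\xi})\approx\theta c(\tfrac{d}{\xi})^{\gamma_2}$ and $u_1(-\tfrac{d}{\xi})\approx\theta c(\tfrac{d}{\xi})^{\gamma_2-1}$, hence the stated asymptotics for $u(k_0)$ and $u(k_0\pm1)=u_3(t_0)\pm u_1(-\tfrac{d}{\xi})$.

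The hard part is the circularity of the two bootstrap claims together with the requirement that, unlike the estimates of Section \ref{sec:chains}, no factor $\ln\xi$ be lost: a careless treatment of $f$ or of the source in the tail equations reproduces exactly the $\int|s|^{-1}\,ds\sim\ln\xi$ that forced the constraint $c\ln(1+\xi)\ll1$ there. The resolution is that the Legendre (power-law) solution operator $S$ already resums those logarithms into the modified exponent $\gamma_2=\tfrac12-\sqrt{\tfrac14-2c^2}$, so once the solution is measured against $S$ rather than the constant matrix the remaining corrections are clean powers of $c$ and the iteration closes for $0<|c|<0.1$. One still has to check that the inexact conservation of $u_3$ (with $\dt u_3=O(c)\,\|u_{\mathrm{tail}}\|_\infty=O(c^2\theta)$, integrable) does no harm, and the analogous statement on $I_3=(\tfrac{d}{\xi},t_1)$ then follows from the reflection $t\mapsto-t$ used in Lemma \ref{lem:righthom}.
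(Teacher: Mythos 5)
Your overall strategy is the same as the paper's: a bootstrap that separates the block $k_0-1,k_0,k_0+1$ from the tail, followed by a variation-of-constants argument around the homogeneous (Legendre/power-law) solution operator $S(t)$ of Proposition \ref{prop:lefthom}, with the forcing $f$ built from $u_3$ and the couplings to $k_0\pm2$. However, two of your quantitative closing steps have genuine gaps. First, your tail hypothesis (ii), $\|u_{\mathrm{tail}}\|_\infty\le(e^c-1)\theta$, cannot hold: at $t_0$ the tail modes may already be of size $\theta$ (only $u(k_0)(t_0)\ge 0.5\theta$ is assumed), and, as your own computation shows, the feed from the growing neighbours into $k_0\pm 2$ is of size $c\cdot c\theta\cdot\gamma_2^{-1}=O(\theta)$, not $O(c\theta)$. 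This is exactly why the paper formulates its bootstrap \eqref{eq:even}--\eqref{eq:boundaryL2} for \emph{increments} $u(k)|_{t_0}^{t}$ and includes the extra terms $C(t^{\gamma_2}-t_0^{\gamma_2})$ for the modes adjacent to $k_0\pm1$; only the weaker statement ``tail $=O(\theta)$'', hence $|f|\lesssim c\theta$, is needed (and available) to close the central block.

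Second, and more seriously, the estimate $\bigl|(\alpha,\beta)|_{t_0}^{t}\bigr|\lesssim C\int_{I_1}|s|^{\gamma_2-1}\,c\,\theta\,ds\lesssim c\theta$ is false as written: $\int_{d/\xi}^{|t_0|}|s|^{\gamma_2-1}\,ds\approx\gamma_2^{-1}\approx(2c^2)^{-1}$, so a lumped bound $|S^{-1}|\lesssim|s|^{\gamma_2-1}$ only yields a drift of order $\theta$ (or worse), which is \emph{not} small compared with $\beta(t_0)\approx c\,u_2(t_0)$ — and the persistence of $\beta\approx c\,u_2(t_0)$ is precisely what produces the two-sided $\approx$ bounds at $t=-\frac{d}{\xi}$. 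The paper closes this step by using the row structure of $S^{-1}$: the row updating $\beta$ has entries comparable to $\gamma_2|s|^{\gamma_1}$ and $c|s|^{\gamma_1-1}$, whose integrals against $|f|\lesssim c\theta$ give a $\beta$-drift of order $c^3\theta$ (paper's ``$C(c,c^3)$''), while the row updating $\alpha$ may drift by $O(c\theta)$ or even $O(\theta)$, which is harmless because the $\alpha$-branch ($|t|^{\gamma_1-1},|t|^{\gamma_1}$) is subdominant at $|t|=\frac{d}{\xi}$ relative to the $\beta$-branch ($|t|^{\gamma_2-1},|t|^{\gamma_2}$). Without distinguishing the two rows (equivalently, the two branches), your argument establishes at best upper bounds and cannot recover the lower bounds in $u(k_0)\approx\theta c(\frac{d}{\xi})^{\gamma_2}$ and $u(k_0\pm1)\approx\theta c(\frac{d}{\xi})^{\gamma_2-1}$; with that refinement inserted, your proof becomes essentially the paper's.
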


 \begin{proof}
   By linearity without loss of generality we may set $\theta=1$.

   Then at least for small positive time it holds that
   \begin{align}
     \label{eq:resonant}
      u(k_0) & \approx c u_2(-1) |t|^{\gamma_2}, \\
\label{eq:odd}
     (u(k_0+1)- u(k_0-1)) &\approx c u_2(-1) |t|^{\gamma_2-1}, \\
\label{eq:even}
     (\frac{u(k_0+1)+ u(k_0-1)}{2})_{t_0}^t &\leq e^{4.1 c(t-t_0)} -1 + (t^{\gamma_{2}}-t_0^{\gamma_{2}}), \\
     \label{eq:boundary}
     u(k) &\leq e^{2.1 c(t+1)} -1 + C (t^{\gamma_{2}}-t_0^{\gamma_{2}})\text{ else}, \\
     \label{eq:boundaryL2}
     \|u 1_{|k-k_0|\geq 2}\|_{l^2}(t) &\leq e^{4.1 c(t-t_0)}  \|u 1_{|k-k_0|\geq 2}\|_{l^2}(t_0).
   \end{align}
   In the following we argue by bootstrap that the maximal time satisfying these
   estimates is given by $T=\frac{1}{c\xi}$.

   Indeed, suppose there were $T<\frac{1}{c\xi}$ maximal with these properties.
   We then show that all conditions \eqref{eq:resonant} to \eqref{eq:boundary}
   do not achieve equality at time $T$ and that hence $T$ could be chosen larger
   by continuity, which contradicts the maximality.

   \underline{Ad \eqref{eq:even}}
   Similarly as in Lemma \ref{lem:reduction1} we may use the symmetry of the
   problem to compute 
   \begin{align*}
     &\dt (u(k_0+1)+ u(k_0-1)) = -a(k_0+2)u(k_0+2)+ a(k_0-2) u(k_0-2))\\
     \Rightarrow & |u(k_0+1)+ u(k_0-1)||_{t_0}^{t} \leq c \int_{t_0}^{T} (|u(k_0+2)|+ |u(k_0-2)|) \\
     & \leq  2c \int_{t_0}^{T} e^{4.1 c(t-t_0)} +(t^{\gamma_{2}}-t_0^{\gamma_{2}}).
   \end{align*}
   We may then roughly control $|t^{\gamma_{2}}-t_0^{\gamma_{2}}|\leq 1 \leq
   e^{4.1 c(t-t_0)}$ and thus control the integral by    
       \begin{align*}
    \frac{4}{4.1} (e^{2.1c(T-t_0)}-1)
     <(e^{4.1c(T-t_0)}-1).
   \end{align*}
   Thus, equality in \eqref{eq:even} is not attained at time $T$.

   \underline{Ad \eqref{eq:boundary}}
   First suppose in addition that $k\neq k_0+2, k_0-2$. Then it holds that
   \begin{align*}
     u(k)|_{t_0}^T= \int_{t_0}^T -a(k+1) u(k+1) + a(k-1) u(k-1) \leq 4c \int_{t_0}^T e^{4.1 c(t-t_0)} dt\\
     = \frac{4}{4.1} (e^{2c(T-t_0)}-1)<e^{2c(T-t_0)}-1,
   \end{align*}
   where we again estimated $|t^{\gamma_{2}}-t_0^{\gamma_{2}}|\leq 1 \leq
   e^{4.1 c(t-t_0)}$ and used that $|a(k)|\leq c$ unless $k=k_0$.
   
   If $k=k_0\pm 2$, $\int_{t_0}^T \p_t u(k)$ additionally involves 
   \begin{align*}
     \int_{t_0}^T a(k_0\pm 1) u(k_0\pm 1) dt \leq c t^{\gamma_{1,2}}|_{t_0}^T,
   \end{align*}
   which is controlled by $t^{\gamma_{1,2}}-t_0^{\gamma_{1,2}}$.

   \underline{Ad \eqref{eq:boundaryL2}:}
   Taking a time-derivative of the energy, that is the left-hand-side, we get that
   \begin{align*}
     \dt E(t)\leq 4c E(t) + c|u(k_0+2)||u(k_0+1)| \leq 4c E(t) + c (e^{4.1ct}+2)(t^{\gamma_{2}}-t_0^{\gamma_{2}}).
   \end{align*}
   The estimate hence follows by Gronwall's lemma or by multiplying with
   $e^{-4ct}$ and then integrating.

   The main part of the proof is thus given by the proof of the estimates for
   $u(k_0)$ and $u(k_0+1)-u(k_0-1)$.

   \underline{Ad \eqref{eq:resonant} and \eqref{eq:odd}}
   Following a similar argument as in the proof of Proposition \ref{prop:lefthom},
   we study the inhomogeneous problem for $(\frac{u(k_0+1)-u(k_0-1)}{2}, u(k_0))=: (u_1,u_2)$:
   \begin{align*}
     \dt
     \begin{pmatrix}
       u_1 \\ u_2
     \end{pmatrix}
     +
     \begin{pmatrix}
       0 & a \\
       b & 0
     \end{pmatrix}
     \begin{pmatrix}
       u_1 \\ u_2
     \end{pmatrix}
     &=- 
     \begin{pmatrix}
       a(k_0+2)u(k_0+2)/2- a(k_0-2)u(k_0-2)/2 \\ (a(k_0+1)-a(k_0-1)) (u(k_0+1)+u(k_0-1))/2 
     \end{pmatrix}\\
     &=: f.
   \end{align*}

   As in Lemma \ref{lem:improvement} we make a variation of constants ansatz,
   where for simplicity of notation write our calculations in terms of the power
   law solutions of the approximate model. By the estimates of Section
   \ref{sec:setup} the solution operator $S(t)$ of the full model is comparable
   and hence the exact same proof immediately extends.
   \begin{align*}
     \begin{pmatrix}
       u_1 \\ u_2
     \end{pmatrix}
     = S(t)
     \begin{pmatrix}
       \alpha(t) \\ \beta(t)
     \end{pmatrix}
     \approx \alpha(t)
     \begin{pmatrix}
       t^{\gamma_1-1}\\ t^{\gamma_1}\frac{\gamma_2}{c}
     \end{pmatrix}
     + \beta(t)      \begin{pmatrix}
       t^{\gamma_2-1}\\ t^{\gamma_2}\frac{\gamma_1}{c}
     \end{pmatrix}.
   \end{align*}
   Solving at time $t_0$, we obtain
   \begin{align*}
     \begin{pmatrix}
       \alpha(t_0)\\
       \beta(t_0)
     \end{pmatrix}
     &=
     S(t_0)^{-1}
                            \begin{pmatrix}
                              u_1(t_0) \\ u_2(t_0)
                            \end{pmatrix} \\
     &\approx \frac{1}{\sqrt{1-8c^2}}
     \begin{pmatrix}
       \gamma_1t_0^{\gamma_2-1} & -c t_0^{\gamma_2} \\
       - \gamma_2 t_0^{\gamma_1-1} & ct_0^{\gamma_1}
     \end{pmatrix}
                    \begin{pmatrix}
                              u_1(t_0) \\ u_2(t_0)
                            \end{pmatrix}.
   \end{align*}
   Here, in the proof of Proposition \ref{prop:lefthom} we concluded by noting that
   only $\beta(t_1)=\beta(t_1)\approx  c u_2(t_0)$ is relevant at $t_1$ (due to the
   smaller powers $\xi$ for $\alpha$).
In order to establish the desired bounds in this inhomogeneous case, we hence
need to show that $\alpha, \beta$ do not deviate from this too much.
We compute
   \begin{align*}
     \dt
     \begin{pmatrix}
       \alpha \\ \beta
     \end{pmatrix}
&= -c      \begin{pmatrix}
       t^{\gamma_1-1} & t^{\gamma_2-1}\\ t^{\gamma_1}\frac{\gamma_2}{c} &
t^{\gamma_2}\frac{\gamma_1}{c}
     \end{pmatrix}^{-1}   f\\
     &= -\frac{c^2}{\sqrt{1-8c^2}}
     \begin{pmatrix}
       t^{\gamma_2}\frac{\gamma_1}{c} & - t^{\gamma_2-1}\\
       - t^{\gamma_1}\frac{\gamma_2}{c} & t^{\gamma_1-1}
     \end{pmatrix} f.
   \end{align*}
   By estimates \eqref{eq:boundary} and \eqref{eq:even} we note that $|f|\leq c$
   and we further note that the integrals of $t^{\gamma_1}, t^{\gamma_1-1}$ are
   uniformly bounded, while $\int t^{\gamma_2-1}\leq \frac{1}{\gamma_2}\leq
   c^{-2}$, where we used that $\gamma_2 \approx 2 c^2$.
   Hence, in total it follows that $(\alpha, \beta)|_{t_0}^t \leq C (c,c^3)$ for
   all times and therefore $(\alpha(t),\beta(t))\approx (\alpha(t_0), \beta(t_0))$, which implies
   the result.
 \end{proof}

\subsubsection{The interval $I_2$}

 \begin{prop}[Middle interval, inhomogeneous case]
   \label{prop:middleinhom}
   Let $u$ be as in Theorem \ref{thm:summary}. Then at time $\frac{d}{\xi}$
   it holds that 
   \begin{align*}
     u(k_0\pm 1)|_{-d/\xi}^{d/\xi} &\approx C c\xi u(k_0,t_1) \approx \theta\ C c^{1-\gamma_2} \xi^{1-\gamma_2} , \\
     u(k_0)|_{-d/\xi}^{d/\xi} &\approx \frac{c}{\xi} u(k_0\pm 1, t_1) \approx \theta\ c^{-\gamma_2} \xi^{-\gamma_2},\\
     u(k)|_{-d/\xi}^{d/\xi}&\leq \frac{cd}{\xi} u(k_0\pm 1, t_1) \leq \theta\ c^{1-\gamma_2} \xi^{1-\gamma_2} ,
   \end{align*}
   where $C=2 \arctan(d)\approx \pi$.
 \end{prop}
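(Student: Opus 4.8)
The plan is to treat the middle interval by the convergent iterated Duhamel expansion already announced in the proof of Lemma \ref{lem:middlehom}: on $I_2=(-\tfrac{d}{\xi},\tfrac{d}{\xi})$ with $d=c^{-1}$, write the full system \eqref{eq:13} in integral form $u(t)=u(-\tfrac d\xi)+\int_{-d/\xi}^{t}L(s)u(s)\,ds$, where $(L(s)v)(k)=-a(k+1,s)v(k+1)+a(k-1,s)v(k-1)$, and iterate. The inhomogeneous (full-lattice) case is genuinely the same computation as the homogeneous three-mode one, supplemented by an $\ell^2$ bootstrap for the far modes $|k-k_0|\ge 2$. The left boundary datum at $t=-\tfrac d\xi$ is the profile produced by Proposition \ref{prop:leftinhom}: $u(k_0)$ of size $\approx\theta\,(\tfrac d\xi)^{\gamma_2}$, the neighbours $u(k_0\pm1)$ of size $\approx\theta c\,(\tfrac d\xi)^{\gamma_2-1}$, and all remaining modes $\le C\theta$.

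The whole argument is driven by two integral estimates over $I_2$. The resonant coefficient integrates to
\begin{align*}
  \int_{-d/\xi}^{d/\xi}a(k_0,s)\,ds=\int_{-d/\xi}^{d/\xi}\frac{c}{\xi^{-2}+s^2}\,ds=c\xi\big[\arctan(\xi s)\big]_{-d/\xi}^{d/\xi}=2c\xi\arctan(d)=Cc\xi ,
\end{align*}
which is the unique \emph{large} integrated coefficient, while for every $k\neq k_0$ one has $|a(k,s)|\le 4c$, hence $\int_{I_2}|a(k,s)|\,ds\lesssim c\,|I_2|=\tfrac2\xi$ (and more precisely $\int_{I_2}a(k_0\pm1,s)\,ds\approx c\cdot\tfrac{2d}{\xi}=\tfrac2\xi$, since $|s|\le d/\xi\ll1$ makes $1\pm\tfrac{k_0\pm1}{k_0}s\approx1$). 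The key structural point — the reason the change of variables $t=k_0^2(\tau-\tfrac1{k_0})$ was made — is that $a(k_0)$ is \emph{pointwise} of size $c\xi^2\gg1$, so the expansion converges in no norm built from $\sup_t|a|$; it converges because a path on $\Z$ can never visit $k_0$ at two consecutive steps, so between any two resonant factors $a(k_0)$ there is at least one factor $a(k_0\pm1)$ with $\int_{I_2}|a(k_0\pm1)|\lesssim\tfrac1\xi$, and every round trip $k_0\to k_0\pm1\to k_0$ is therefore weighted by $Cc\xi\cdot\tfrac1\xi=Cc\ll1$. This alternation, not pointwise smallness, turns the Duhamel series into a convergent geometric series of ratio $O(c)$.

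Carrying out the iteration is then routine. The zeroth term is the frozen datum; the first Duhamel correction changes $u(k_0\pm1)$ by $\pm\big(\int_{I_2}a(k_0)\big)u(k_0,-\tfrac d\xi)=\pm Cc\xi\,u(k_0,-\tfrac d\xi)$ up to terms bounded by $\tfrac1\xi$ times the largest mode (hence lower order), changes $u(k_0)$ by $\lesssim c\,|I_2|\cdot\max_{I_2}|u(k_0\pm1)|\lesssim c\,u(k_0,-\tfrac d\xi)$ — i.e.\ $u(k_0)$ is constant on $I_2$ up to a relative $O(c)$ error, which a posteriori justifies freezing it inside the first integral — and changes $u(k)$ for $k=k_0\pm2$ by $\lesssim c\,|I_2|\cdot\max_{I_2}|u(k_0\pm1)|\lesssim\tfrac{cd}{\xi}\,u(k_0\pm1,\,\text{final})$, as claimed, while for $|k-k_0|\ge3$ it changes $u(k)$ by at most $\tfrac{C}\xi\theta$. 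A one-line energy estimate (Gronwall on $\tfrac{d}{dt}\|u\,\mathbf 1_{|k-k_0|\ge3}\|_{\ell^2}^2$, using $|a(k)|\le4c$) multiplies $\|u\,\mathbf 1_{|k-k_0|\ge3}\|_{\ell^2}$ by at most $e^{O(1/\xi)}$, so the far tail never feeds more than an $O(\theta/\xi)$ error back into the modes $k_0,k_0\pm1$; similarly $u_3=\tfrac12(u(k_0+1)+u(k_0-1))$, exactly conserved in the homogeneous model, picks up only this $O(\theta/\xi)$ error. Each higher Duhamel iterate contains one more $k_0\to k_0\pm1\to k_0$ round trip than the previous, hence is smaller by a factor $\lesssim Cc$; summing the geometric series collapses everything to the first iterate, and substituting the left-boundary values of Proposition \ref{prop:leftinhom} gives the stated asymptotics, with $\approx$ absorbing the $1+O(c)$ and $1+O(\xi^{-1})$ corrections.

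The main obstacle is precisely this bookkeeping: because $a(k_0)$ is pointwise enormous, the smallness has to be harvested from \emph{pairs} of consecutive steps (a resonant step followed by a non-resonant one) rather than from individual steps, and the $\ell^2$ tail of the far modes must be carried along in the same bootstrap so that the ``inhomogeneities'' re-entering the equations for $k_0,k_0\pm1$ are genuinely lower order; once the alternation structure is isolated the rest is a geometric series and Gronwall. (The explicit hypergeometric representation of Appendix \ref{sec:special} provides an independent check of the constant $C=2\arctan(d)$.)
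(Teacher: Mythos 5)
Your proposal is correct and follows essentially the same route as the paper: the paper also proves this via the convergent Duhamel/path expansion on $I_2$, with the key point that every return to the resonant mode $k_0$ must pass through a non-resonant step, so each round trip is weighted by $\big(\int_{I_2}a(k_0)\big)\cdot O(\xi^{-1})=O(c)$, the first iterate dominates, and the boundary data of Proposition \ref{prop:leftinhom} is then substituted as in Lemma \ref{lem:middlehom}. The only cosmetic difference is that the paper packages the expansion as single-mode lemmas (Lemmas \ref{lem:res} and \ref{lem:nonres}) with pointwise $(\tfrac{4cd}{\xi})^{|k-k_0|}$-type decay for the far modes, whereas you control the far tail by an $\ell^2$ Gronwall bound, which serves the same purpose.
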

We here use the Duhamel iteration, which we phrase as properties of the solution
map for single mode initial data. Arbitrary initial data can then be realized as
a linear combination.
We emphasize that on this short time interval the action $k_0 \mapsto k_0\pm 1$
is large (of size $C c \xi$), while all other actions are small perturbations of
the identity.
\begin{lem}
  \label{lem:res}
  Suppose that at time $-\frac{d}{\xi}$ it holds that $u(k)=\delta_{k k_0}$.
  Then at time $\frac{d}{\xi}$, $u$ satisfies
  \begin{enumerate}
  \item $|u(k_0)-1|\leq \frac{1}{1-c_1}\frac{c_2}{1-c_2}$, 
  \item $|u(k_0\pm 1)\mp 2c \xi \arctan(\xi t)|_{-d/\xi}^{d/\xi}|\leq \frac{2c}{1-4c^2d}
    2c \xi \arctan(\xi t)|_{t_0}^{t_1}$, 
  \item $|u(k)|\leq |\frac{c}{\xi}|^{|k-k_0|-1}\frac{1}{1-c}$ else,
  \end{enumerate}
  where $c_1=\frac{4cd}{\xi}=\frac{4}{\xi}$ and $c_2=8 c^2 d\arctan(\xi
  t)_{t_1}^{t_2}= 8c \arctan(d)\approx 4 \pi c$.
\end{lem}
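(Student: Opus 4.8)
The plan is to construct the solution of \eqref{eq:13} on the resonant interval $I_2=(-\tfrac d\xi,\tfrac d\xi)$ by the convergent Duhamel (Neumann) series and to read the three estimates off the first iterate together with a geometrically small remainder. Writing the equation in integral form $u=u^{(0)}+\mathcal Du$, with $u^{(0)}(k)\equiv\delta_{k,k_0}$ and $(\mathcal Dw)(k)(t)=-\int_{-d/\xi}^{t}\big(a(k-1)(s)\,w(k-1)(s)-a(k+1)(s)\,w(k+1)(s)\big)\,ds$, the solution is $u=\sum_{n\ge0}\mathcal D^{n}u^{(0)}$, which, exactly as in Section~\ref{sec:chains}, expands into a sum over nearest-neighbour paths $\gamma$ from $k_0$ to $k$ of time-ordered integrals of $\prod_i a(\gamma_i)$. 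Two weights govern everything: the resonant weight $\|a(k_0)\|_{L^{1}(I_2)}=c\xi\,\arctan(\xi t)\big|_{-d/\xi}^{d/\xi}=2c\xi\arctan d$, which is large but finite, and the non-resonant weights $\|a(k)\|_{L^{1}(I_2)}\le 4c\,|I_2|=8cd/\xi=8/\xi$ for every $k\neq k_0$ (using $d=c^{-1}$ and $|a(k)|\le 4c$ away from the resonance).

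First I would isolate the leading term: $\mathcal Du^{(0)}$ is supported on $k_0\pm1$ with $(\mathcal Du^{(0)})(k_0\pm1)(d/\xi)=\mp\int_{-d/\xi}^{d/\xi}a(k_0)=\mp\,2c\xi\arctan d$, which is the announced main term of (2); every $\mathcal D^{n}u^{(0)}$ with $n\ge2$ is an error. To bound the errors I would sort the contributing paths by the number $v$ of times they visit the resonant site $k_0$: such a path carries exactly $v$ resonant factors (one per departure from $k_0$), and between consecutive visits, resp.\ before reaching $k$, it performs non-resonant excursions. Packaging each departure with its following excursion into a block and bounding, in each block, the lone resonant factor by $a(k_0)(\cdot)$ and the $m_j$ non-resonant factors by $4c$, the time-ordered integral of a block of excursion-length $m_j$ is at most $\big(8/\xi\big)^{m_j}/m_j!$ times the local $a(k_0)$-mass; summing over excursion shapes ($\le 2^{m_j}$ per length) and then over $v$ produces a geometric series whose ratio is the cost of one round trip to $k_0$,
\begin{align*}
  c_2\;:=\;\iint_{-d/\xi\,\le\, s\,\le\, s'\,\le\, d/\xi} a(k_0\pm1)(s')\,a(k_0)(s)\,ds\,ds'\;\le\;\|a(k_0\pm1)\|_{L^{1}(I_2)}\cdot 2c\xi\arctan d\;\approx\;8c\arctan d .
\end{align*}
Thus the $k_0\pm1$ remainder in (2) is $\lesssim \frac{1}{1-c_1}\cdot\frac{c_2}{1-c_2}\cdot 2c\xi\arctan d$ --- the factor $(1-c_1)^{-1}$, $c_1=4cd/\xi=4/\xi$, coming from summing the excursion lengths --- and the same bookkeeping applied to paths of even length from $k_0$ back to $k_0$ gives $|u(k_0)-1|\le\frac1{1-c_1}\frac{c_2}{1-c_2}$, which is (1). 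For (3), a path reaching $k$ with $|k-k_0|\ge2$ must, after its first (resonant) departure, still take at least $|k-k_0|-1$ further non-resonant steps to reach $k$, each costing a factor $\le 8/\xi$, so summing the appended excursions yields the stated geometric decay $|u(k)|\le\frac1{1-c}\big|\tfrac c\xi\big|^{|k-k_0|-1}$.

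The step I expect to be the main obstacle is precisely the round-trip estimate that forces the geometric ratio $c_2$ to be strictly less than $1$: since $\|a(k_0)\|_{L^{1}(I_2)}\approx\pi c\xi$ is itself large, smallness is recovered only after pairing it with a non-resonant factor of $L^{1}$-mass $\approx c|I_2|\approx 2/\xi$, giving $c_2\approx 8c\arctan d\approx4\pi c$; the Neumann series therefore converges exactly because $c$ is taken small enough that $c_2<1$, and making the constants sharp (so that the $\frac{c_2}{1-c_2}$ and $\frac1{1-c_1}$ of the statement come out) is the delicate part. A secondary point, invisible in the three-mode model of Section~\ref{sec:setup} but present in the full model, is that the excursions are not confined to three modes; one must check that the $\ell^{2}$-mass leaking to far modes stays bounded, which follows from the same summation with the tiny ratio $c_1=4/\xi$ and is what produces the prefactor $(1-c_1)^{-1}$. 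Finally, linearity upgrades the single-mode statement to arbitrary initial data on $I_2$, which is the form in which it is used in Proposition~\ref{prop:middleinhom}.
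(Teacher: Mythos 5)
Your proposal follows essentially the same route as the paper's own proof: a Duhamel/Neumann expansion over nearest-neighbour paths, bounding each time-ordered path integral by $L^1$-masses of the coefficients, isolating the single-step path as the leading term in (2), and observing that every large resonant factor $a(k_0)$ (mass $\approx \pi c\xi$) must be paired with at least one small non-resonant factor (mass $O(1/\xi)$), so that the sum over paths is a geometric series with ratio $c_2\approx 8c\arctan d<1$ and prefactor $(1-c_1)^{-1}$ from summing excursion lengths and the $2^{|\gamma|}$ path count --- your block decomposition is just a repackaging of the paper's counting $j_1\ge j_2$ (resp.\ $j_1\ge j_2-1$). The only caveat is in item (3), where your sketch handles the unpaired initial resonant factor with the same level of roughness as the paper's own grouping by the last resonance, so it is not a deviation from the paper's argument.
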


\begin{proof}
  Ad (i): Let $\gamma=(k_0,\dots, k_0)$ be a path starting and ending in $k_0$,
  then we may roughly estimate
  \begin{align*}
    \iint_{t_0\leq \tau_1\leq \dots \leq t_1} \prod_{i} a(\gamma_i,\tau_i) \sgn(\gamma_{i+1}-\gamma_{i}) d\tau_i
  \end{align*}
  by
  \begin{align*}
    \prod \|a(\gamma_i,\tau_i)\|_{L^1([-d/\xi,d/\xi])} = (\frac{2cd}{\xi})^{j_1} (c \xi \arctan(\xi t)|_{-d/\xi}^{d/\xi})^{j_2},
  \end{align*}
  where $j_1$ corresponds to the number of non-resonance and $j_2$ to the number of
  resonances $\gamma_i=k_0$ (recall that the last entry of $\gamma$ does not appear in the
  integral).
  Then in order to start end in $k_0$ it needs to hold that $j_1\geq j_2$, since
  we have to come back to $k_0$ before leaving it again.
  Thus, the contribution of the path $\gamma$ is controlled by
  \begin{align*}
    (\frac{2cd}{\xi})^{j_1-j_2}  (2 c^2 d \arctan(\xi t)|_{-d/\xi}^{d/\xi})^{j_2}.
  \end{align*}
  Estimating the number of paths of given length $j_1+j_2$ by $2^{j_1+j_2}$ the
  sum over the integrals of all such paths can be controlled by
  \begin{align*}
    \frac{1}{1-\frac{4cd}{\xi}} \frac{8c^2d \arctan(\xi t)|_{-d/\xi}^{d/\xi}}{1-8c^2d \arctan(\xi t)|_{-d/\xi}^{d/\xi}},
  \end{align*}
  where we used that $j_2\geq 1$.

  Ad (ii): Let us first consider the special paths $\gamma=(k_0,k_0\pm 1)$,
  which yield an integral
  \begin{align*}
    \mp \int_{-d/\xi}^{d/\xi} a(k_0) d\tau= \mp c \xi \arctan(\xi t)|_{-d/\xi}^{d/\xi}\gg 1.
  \end{align*}
  For any other paths $\gamma$ starting in $k_0$ and ending in $k_0\pm 1$, we
  may again roughly bound the integral by
  \begin{align*}
    (\frac{2cd}{\xi})^{j_1} (c \xi \arctan(\xi t)|_{-d/\xi}^{d/\xi})^{j_2},
  \end{align*}
  where now $j_1\geq j_2-1\geq 0$ and we already treated $j_2=1,j_1=0$
  separately.
  We may thus express this bound as
  \begin{align*}
    (\frac{2cd}{\xi})^{\hat{j}_1}  (2 c^2 d \arctan(\xi t)|_{-d/\xi}^{d/\xi})^{\hat{j}_2}  (c \xi \arctan(\xi t)|_{-d/\xi}^{d/\xi}),
  \end{align*}
  where $(\hat{j}_1,\hat{j}_2)=(0,0)$ is excluded.
  Again estimating the number of such paths from above by
  $2^{\hat{j}_1+\hat{j}_2+1}$ and summing the geometric series, we obtain the
  desired result.

  Ad (iii): Let $k \not \in \{k_0-1,k_0,k_0+1\}$.
  Then given a path $\gamma=(k_0, \dots , k)$ there is a last time where
  $\gamma_i=k_0$, after which the remainder path is non-resonant at least $|k-k_0|$ times.
  Grouping all paths with the same remainder, we first estimate the contribution
  by the segments up to the last resonance as in (i) by
  \begin{align*}
    \frac{1}{1-\frac{4cd}{\xi}} \frac{8c^2d \arctan(\xi t)|_{-d/\xi}^{d/\xi}}{1-8c^2d \arctan(\xi t)|_{-d/\xi}^{d/\xi}}
  \end{align*}
  and then estimate the sum
  over all possible remainders by
  \begin{align*}
    \sum_{j\geq |k-k_0|} (\frac{4cd}{\xi})^{j}= \frac{1}{1-\frac{4cd}{\xi}} (\frac{4cd}{\xi})^{|k-k_0|},
  \end{align*}
  where we again introduced a factor $2^j$ to account for the number of all
  remainders of a given length.
\end{proof}

\begin{lem}
  \label{lem:nonres}
  Let $l \neq k_0$ and suppose that at time $-\frac{d}{\xi}$ it holds that
  $u(k)=\delta_{k l}$.
  Then $u$ satisfies
  \begin{align*}
    u(k)|_{-d/\xi}^{d/\xi}\leq \left(\frac{4cd}{\xi}\right)^{|k-l|} &+ \left(\frac{4cd}{\xi}\right)^{|k-k_0|+|l-k_0|-1}
    (8c^2 d\arctan(t \xi)|_{-d/\xi}^{d/\xi})\\
    &\quad \times \frac{1}{(1-\frac{4cd}{\xi})^2} \frac{1}{1-8c^2d\arctan(t\xi)|_{-d/\xi}^{d/\xi}}.
  \end{align*}
  Here, with slight abuse of notation $|a-b|$ denotes the minimal path length
  between $a$ and $b$, i.e. $|a-a|=2$.
\end{lem}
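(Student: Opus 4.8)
The plan is to run the same Duhamel path expansion as in the proof of Lemma~\ref{lem:res}, now carrying along the non-resonant starting mode $l$. On $I_2=(-d/\xi,d/\xi)$ with $d=c^{-1}$, since \eqref{eq:13} couples only nearest neighbours, the solution with datum $\delta_{\cdot,l}$ at time $-d/\xi$ is $u(k,d/\xi)=\sum_{\gamma:\,l\to k}I[\gamma]$, where $I[\gamma]=\iint_{-d/\xi\le\tau_1\le\cdots\le\tau_{|\gamma|}\le d/\xi}\prod_{i=1}^{|\gamma|}\big(\pm a(\gamma_i,\tau_i)\big)\,d\tau_i$, the product running over the source mode of each step (the target $\gamma_{|\gamma|+1}=k$ contributes no factor), and the trivial path of length $0$ --- present only when $k=l$ --- producing the identity which is cancelled in $u(k)|_{-d/\xi}^{d/\xi}$; the convention $|a-a|=2$ records that the shortest nontrivial loop at a mode has length $2$. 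First I would record the two $L^1(I_2)$ bounds: $\|a(k_0)\|_{L^1(I_2)}=c\xi\,\arctan(\xi t)|_{-d/\xi}^{d/\xi}$, and $\|a(k)\|_{L^1(I_2)}\le\tfrac{2cd}{\xi}$ for $k\ne k_0$, since $|a(k)|\lesssim c$ on the resonant interval, far from its own critical time. Because the integrand of $I[\gamma]$ is time-ordered, $|I[\gamma]|\le\prod_i\|a(\gamma_i)\|_{L^1(I_2)}$, and there are at most $2^{|\gamma|}$ paths of length $|\gamma|$, so each non-resonant step of a path costs a factor $\tfrac{4cd}{\xi}$. Convergence of the series and the legitimacy of rearranging it then follow from Theorem~\ref{thm:LWP} together with these bounds.

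Then I would split the sum by whether $\gamma$ carries a \emph{resonant factor}, i.e.\ an index $i\le|\gamma|$ with $\gamma_i=k_0$. If it does not, then $|\gamma|\ge|k-l|$ and this family contributes at most $\sum_{j\ge|k-l|}\big(\tfrac{4cd}{\xi}\big)^j=\tfrac{1}{1-4cd/\xi}\big(\tfrac{4cd}{\xi}\big)^{|k-l|}$, which is the first term (the prefactor $\tfrac{1}{1-4cd/\xi}=\tfrac{1}{1-4/\xi}$ is harmless). If $\gamma$ does carry a resonant factor, I would decompose it at its \emph{first} visit to $k_0$ into a non-resonant prefix from $l$ to $k_0$ --- contributing at least $|l-k_0|$ steps, hence a factor $\tfrac{1}{1-4cd/\xi}\big(\tfrac{4cd}{\xi}\big)^{|l-k_0|}$ after summing over prefixes --- followed by a path from $k_0$ to $k$ that itself carries a resonant factor. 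Since the time variables of the prefix and of the remainder are nested inside $I_2$, the two blocks decouple after bounding each inner integral by the full $L^1(I_2)$ norm, and the remainder is exactly the object already estimated in Lemma~\ref{lem:res} (its parts (ii)/(iii)): each departure from $k_0$ (weight $\|a(k_0)\|_{L^1(I_2)}$) must be matched by a non-resonant return step to $k_0$, producing one factor $8c^2d\arctan(\xi t)|_{-d/\xi}^{d/\xi}$ per round trip, a factor $\big(\tfrac{4cd}{\xi}\big)^{|k-k_0|-1}$ from the remaining steps needed to reach $k$, and the geometric tails $\tfrac{1}{1-4cd/\xi}$ and $\tfrac{1}{1-8c^2d\arctan(\xi t)|_{-d/\xi}^{d/\xi}}$. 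Multiplying by the prefix bound and collecting the two $\tfrac{1}{1-4cd/\xi}$ factors into $\tfrac{1}{(1-4cd/\xi)^2}$ yields the second term, and adding the two families gives the stated inequality.

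The routine part is the coefficient bookkeeping; the delicate point is the combinatorics of the resonant-visiting paths: the decomposition must be fixed unambiguously (via the \emph{first} visit to $k_0$), the vertex $k_0$ shared by the prefix and the remainder must not be double-counted so that the exponents add up to $|k-k_0|+|l-k_0|-1$ and not one more, and one must check that the two geometric series --- ratios $\tfrac{4cd}{\xi}=\tfrac{4}{\xi}\ll1$ and $8c^2d\arctan(\xi t)|_{-d/\xi}^{d/\xi}$, which is $\lesssim c<1$ in the considered range $0<c<0.2$ --- converge, which is precisely what justifies the rearrangement of the Duhamel series. No analytic ingredient beyond Lemma~\ref{lem:res} is needed, which is why the lemma is short, but this combinatorial accounting has to be carried out with care.
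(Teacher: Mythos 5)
Your route is the same as the paper's: expand in Duhamel paths on $I_2$, bound each coefficient by its $L^1(I_2)$ norm ($\frac{2cd}{\xi}$ for non‑resonant factors, $c\xi\arctan(\xi t)|_{-d/\xi}^{d/\xi}$ for the resonant one), count paths by $2^{|\gamma|}$, treat purely non‑resonant paths by a geometric series (your first term is exactly the paper's), and reduce the resonance‑visiting paths to Lemma \ref{lem:res} after splitting off a non‑resonant stretch to/from $k_0$ (the paper groups by the segment from first to last resonance; you cut at the first visit — same idea). The problem is precisely the ``delicate point'' you flag. Your claim that in the remainder from $k_0$ to $k$ ``each departure from $k_0$ must be matched by a non‑resonant return step'' is false for the \emph{last} departure: the path ends at $k\neq k_0$ and never returns, so that resonant factor enters unpaired, with the large weight $c\xi\arctan(\xi t)|_{-d/\xi}^{d/\xi}$, not inside an $8c^2d\arctan(\cdot)$ round trip. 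Counting correctly, a path with $j_2\ge 1$ resonant factors has at least $|l-k_0|+(j_2-1)+(|k-k_0|-1)$ non‑resonant ones, so the $L^1$/path‑counting estimate you are running yields
\begin{align*}
\frac{\left(\frac{4cd}{\xi}\right)^{|l-k_0|+|k-k_0|-2}}{1-\frac{4cd}{\xi}}\;\cdot\;\frac{8c^2d\arctan(\xi t)|_{-d/\xi}^{d/\xi}}{1-8c^2d\arctan(\xi t)|_{-d/\xi}^{d/\xi}},
\end{align*}
i.e.\ the exponents add up to $|k-k_0|+|l-k_0|-2$, not $-1$: your bound is weaker than the one you claim to obtain by a factor $\frac{4cd}{\xi}$ (equivalently, one factor $8c^2d\arctan(\cdot)$ should really be $2c\xi\arctan(\cdot)$).

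This is not something you can fix by a cleverer pairing. Take $l=k=k_0+1$: the single path $(k_0+1,k_0,k_0+1)$ contributes
$\int_{-d/\xi}^{d/\xi} a(k_0,\tau_2)\int_{-d/\xi}^{\tau_2}a(k_0+1,\tau_1)\,d\tau_1\,d\tau_2\approx c\,\tfrac{d}{\xi}\cdot c\xi\cdot 2\arctan(d)=2c^2d\arctan(d)\sim c$
(the $\tau_2$‑odd part cancels, the constant part does not), which already exceeds the second term of the inequality in the form you assert (of size $O(c/\xi)$). So the crude $L^1$ bookkeeping can only deliver the display above, with the $-2$ exponent; to be fair, the paper's own three‑line proof (prefix of $\ge|l-k_0|$ non‑resonances, closed middle handled as in Lemma \ref{lem:res}(i), suffix of $\ge|k-k_0|-1$ non‑resonances) leaves the resonant factor of the final departure equally unaccounted for, but your write‑up turns this into an explicit, incorrect matching claim and an exponent count that does not add up. If you prove the lemma with the weaker (correct) exponent, you should then check it still suffices where it is used, in the perturbative step of Proposition \ref{prop:middleinhom}.
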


\begin{proof}
  Let $k\in \Z$ be given. Consider first the case of a purely non-resonant path
  $\gamma$, which can be estimated by
  \begin{align*}
    (\frac{2cd}{\xi})^{|\gamma|}.
  \end{align*}
  Any such path has length at least $|k-l|$ (with $|l-l|=2$).
  The first summand in the above estimate is hence obtained by a geometric
  series.

  Next consider a path with at least one resonance.
  We group all paths that share the segment from first to last resonance and
  observe that at least $|l-k_0|$ non-resonances are needed to reach $k_0$ for
  the first time and at least $|k-k_0|-1$ non-resonances to reach $k$ from
  $k_0$.
  We then may again control the sum over all path segments from $k_0$ to $k_0$
  as in the previous lemma and control the first and last segment by a geometric series.
\end{proof}

Using the result of the preceding lemmas, we can now prove Proposition \ref{prop:middleinhom}.

\begin{proof}[Proof of Proposition \ref{prop:middleinhom}]
  By linearity we may decompose our initial data $u(t_1)$ as
  \begin{align*}
    u(t_1)= u(t_1)|_{k=k_0} + u(t_1)|_{k\in \{k_0-1,k_0+1\}} + u^r(t_1).
  \end{align*}
  Then by Lemma \ref{lem:nonres} and the bound on $u^r(t_1)$ established in
  Proposition \ref{prop:leftinhom}, we may estimate its contribution to $u(t_2)$
  by
  \begin{align*}
    ((\frac{4}{\xi})^{|\cdot|} * u^r(t_1))(l) + Cc (\frac{4}{\xi})^{|l-k_0|}((\frac{4}{\xi})^{|\cdot-k_0|-1} * u^r(t_1))(l),
  \end{align*}
  where $C=\frac{16\arctan(d)}{(1-\frac{4}{\xi})^2(1-16c\arctan(d))}$.
  This contribution is thus very smooth and small and can be considered a
  perturbation.

  Concerning the contributions by $k_0$ and $k_0\pm 1$, we note that Lemmas
  \ref{lem:nonres} and \ref{lem:res} combined with the bounds on $u(t_1)$
  established in Proposition \ref{prop:leftinhom} control $u(k)$ for $k \in
  \{k_0-1,k_0,k_0+1\}$ in the desired way and show that
  \begin{align*}
    \begin{pmatrix}
      \frac{u(k_0+1)-u(k_0-1)}{2}\\
      u(k_0)
    \end{pmatrix}|_{t=t_2}
    \approx
    \begin{pmatrix}
      1 & 2Cc \xi \\
      \frac{1}{\xi} & 1
    \end{pmatrix}  \begin{pmatrix}
      \frac{u(k_0+1)-u(k_0-1)}{2}\\
      u(k_0)
    \end{pmatrix}|_{t=t_1}.
  \end{align*}
  We thus conclude as in the proof of Lemma \ref{lem:middlehom}.
\end{proof}

\subsubsection{The interval $I_3$}

\begin{prop}[Right interval, inhomogeneous case]
  Let $u$ be as in Theorem \ref{thm:summary}.
  Then it holds that at time $t_1$ all modes satisfy
  \begin{align*}
    u\leq C \xi^{\gamma}
  \end{align*}
  and $u(k_0\pm 1)$ is bounded below by $\frac{C}{2}\xi^{\gamma}$.
\end{prop}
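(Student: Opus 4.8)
The plan is to feed the data at time $t=\tfrac{d}{\xi}$ furnished by Proposition~\ref{prop:middleinhom} (together with the tail bounds of Proposition~\ref{prop:leftinhom}, carried through $I_2$) into an inhomogeneous analogue of Lemma~\ref{lem:righthom}, closing everything by a coupled bootstrap exactly parallel to Proposition~\ref{prop:leftinhom}. As in the homogeneous case, set $u_1=\tfrac12(u(k_0+1)-u(k_0-1))$, $u_2=u(k_0)$, $u_3=\tfrac12(u(k_0+1)+u(k_0-1))$; then $u_3$ is conserved up to the feed $-a(k_0+2)u(k_0+2)+a(k_0-2)u(k_0-2)$, while $(u_1,u_2)$ solves $\dt(u_1,u_2)^{T}+\big(\begin{smallmatrix}0&a\\ -b&0\end{smallmatrix}\big)(u_1,u_2)^{T}=f$ with $f=-\big(\tfrac12(a(k_0+2)u(k_0+2)-a(k_0-2)u(k_0-2)),\,(a(k_0+1)-a(k_0-1))u_3\big)^{T}$, just as in Proposition~\ref{prop:leftinhom}. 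The bootstrap controls $(u_1,u_2)$ by variation of constants against the homogeneous solution operator on $I_3$, and the modes with $|k-k_0|\ge 2$ by the Gronwall/energy estimates from the proof of Proposition~\ref{prop:leftinhom}.

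For the homogeneous part, the reflection $(u_1(t),u_2(t),u_3(t))\mapsto(-u_1(-t),u_2(-t),u_3(-t))$ from the proof of Lemma~\ref{lem:righthom} turns the problem on $I_3=(\tfrac{d}{\xi},t_1)$ into a left-interval problem, so the Legendre-function analysis of Lemma~\ref{lem:improvement} applies verbatim: the homogeneous solution operator $S(t)$ has entries componentwise comparable to $\tfrac{\gamma_i}{c}|t|^{\gamma_i-1}$ in the first row and $|t|^{\gamma_i}$ in the second ($i=1,2$), with $\det S$ conserved and $\approx\gamma/c$, whence $|S(t)^{-1}|\lesssim|t|^{\gamma_2-1}$ is integrable on $I_3$.

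Writing $(u_1,u_2)=S(t)(\alpha,\beta)^{T}$ and solving at $t=\tfrac{d}{\xi}$ with the data of Proposition~\ref{prop:middleinhom}, together with $\gamma_2=\mathcal O(c^2)$ and $\gamma_1\approx1$, one finds that the whole $\xi^{\gamma}$ gain sits in the slowly-varying component $\alpha$, while $\beta=\mathcal O(\theta)$. One then runs the following bootstrap on $I_3$: the modes with $|k-k_0|\ge2$ satisfy geometrically decaying bounds $|u(k_0\pm j)|\lesssim c^{j-1}|\alpha|+\theta\lesssim\xi^{\gamma}\theta$ (each step away from $k_0$ buys a factor $c$, using $|a(k)|\le4c$ there and the feed $\int_{I_3}|a(k_0\pm1)u(k_0\pm1)|\,dt\lesssim c\int_{I_3}(|u_1|+|u_3|)\,dt$), which forces $|f|\lesssim c\,|u(k_0\pm2)|+c\,|u_3|$ to be so small that $\int_{I_3}|S(t)^{-1}f|\,dt$ is $\mathcal O(c^2)$ relative to $|\alpha(\tfrac{d}{\xi})|$ — here one must keep the actual sizes of $a(k)$ and the $t$-weights of $S(t)^{-1}$, not the crude bound $|a|\le4c$, and use the $\mathcal O(ct)$ smallness of $a(k_0+1)-a(k_0-1)$. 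Hence $\alpha$ stays within a factor close to $1$ of its value at $\tfrac{d}{\xi}$, so evaluating at $t_1\approx\tfrac12$, where all power laws are $\approx1$, gives $u_1(t_1)$ and $u_2(t_1)$ of size $\xi^{\gamma}\theta$ (up to fixed $c$-dependent constants), exactly as in Lemma~\ref{lem:righthom} and Theorem~\ref{thm:homogeneous_full}, while the modes with $|k-k_0|\ge2$ obey $|u(k)(t_1)|\lesssim c^{|k-k_0|}\xi^{\gamma}\theta+\theta$ by the convolution/geometric-series bookkeeping of Lemma~\ref{lem:nonres}. Since $u(k_0\pm1)=u_3\pm u_1$ with $|u_3|\lesssim\theta\ll|u_1(t_1)|$, the maximum over all modes at $t_1$ is attained, up to a factor $2$, on $u(k_0\pm1)$; choosing $C$ accordingly gives $|u(k)(t_1)|\le C\xi^{\gamma}\theta$ for all $k$ and $|u(k_0\pm1)(t_1)|\ge\tfrac{C}{2}\xi^{\gamma}\theta$, which also makes Theorem~\ref{thm:summary} applicable once more.

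The main obstacle is the circularity of this coupled bootstrap: the inhomogeneity $f$ driving $(u_1,u_2)$ contains $u(k_0\pm2)$, whose growth on $I_3$ is in turn driven by the large neighbours $u(k_0\pm1)\approx\pm u_1$, so a purely sequential estimate is not available. It closes only because one retains the fine structure — the factor $c$ gained at each mode away from $k_0$, the $\mathcal O(ct)$ vanishing of $a(k_0+1)-a(k_0-1)$ that renders the $u_3$-coupling harmless, and the integrable weight $|S(t)^{-1}|\lesssim|t|^{\gamma_2-1}$ — so that the corrections to the dominant, $\xi^{\gamma}$-carrying component $\alpha$ are of relative size $\mathcal O(c^2)$ and cannot alter its sign or order of magnitude. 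This is entirely parallel to the bootstrap in Proposition~\ref{prop:leftinhom} and to the explicit hypergeometric computation underlying Theorem~\ref{thm:specialfunctions}.
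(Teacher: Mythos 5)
Your proposal follows essentially the same route as the paper's proof: the same reduction to $(u_1,u_2,u_3)$ plus far modes, a bootstrap on $I_3$ in which the modes with $|k-k_0|\ge 2$ are controlled by direct integration/Gronwall and the resonant pair by a variation-of-constants ansatz against the power-law-comparable homogeneous solution operator, with the inhomogeneity $f$ shown to perturb the dominant, $\xi^{\gamma}$-carrying coefficient only by a small relative amount, so that the data at $t=\frac{d}{\xi}$ from Proposition \ref{prop:middleinhom} propagate to $t_1$ as in Lemma \ref{lem:righthom}. The only substantive difference is that you record a finer geometric $c^{|k-k_0|}$-type decay for the far modes, whereas the paper's bootstrap only keeps the uniform bound $\xi^{\gamma}\bigl(e^{4.1c(t-d/\xi)}-1+t^{\gamma_1}-(d/\xi)^{\gamma_1}\bigr)$, which already suffices for the stated conclusion.
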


\begin{proof}
  We again make a bootstrap ansatz similar to the one in Proposition
  \ref{prop:leftinhom}, but now take into account the different powers of $\xi$
  on different modes:
   \begin{align}
\label{eq:oddright}
     u(k_0+1)- u(k_0-1) &\approx 2 \theta c \xi^{\gamma} t^{1-\gamma_2} \\
     \label{eq:resonantright}
      u(k_0) &\approx \theta |\xi|^{\gamma} t^{\gamma_1}, \\
\label{eq:evenright}
     u(k_0+1)+ u(k_0-1)|_{d/\xi}^t &\leq \xi^{\gamma} (e^{4.1c(t-d/\xi)}-1 + t^{\gamma_{1}}-d/\xi^{\gamma_1}), \\
     \label{eq:boundaryright}
     u(k) &\leq \xi^{\gamma} (e^{4.1c(t-d/\xi)}-1+t^{\gamma_{1}}-d/\xi^{\gamma_1}) \text{ else}.
   \end{align}

   Ad \eqref{eq:boundaryright} and \eqref{eq:evenright}:
   Here, we again use that $u(k),\frac{u(k_0+1)+u(k_0-1)}{2} \leq \xi^{\gamma}$ at time
   $d/\xi$ and thus estimate
   \begin{align*}
     \int_{d/\xi}^t \dt u \leq 2c \xi^{\gamma} e^{4.1c(t+1)}<\xi^{\gamma} (e^{4.1c(t+1)}-1), 
   \end{align*}
   where we roughly estimate $t^{\gamma_{1}}-d/\xi^{\gamma_1}\leq 1 \leq
   e^{4.1c(t-d/\xi)}$ again.

   Ad \eqref{eq:oddright} and \eqref{eq:resonantright}:
   We make the same variation of constants ansatz as in the proof of Proposition
   \ref{prop:leftinhom} and again for simplicity of notation write the power law
   approximate solutions instead.
   Thus consider
   \begin{align*}
     \begin{pmatrix}
       u_1 \\ u_2
     \end{pmatrix}
     = S(t)
     \begin{pmatrix}
       \alpha(t) \\ \beta(t)
     \end{pmatrix}
     \approx \alpha(t)
     \begin{pmatrix}
       t^{\gamma_1-1}\\ t^{\gamma_1}\frac{\gamma_2}{c}
     \end{pmatrix}
     + \beta(t)      \begin{pmatrix}
       t^{\gamma_2-1}\\ t^{\gamma_2}\frac{\gamma_1}{c}
     \end{pmatrix}.
   \end{align*}
   Here, as in the proof of \ref{lem:improvement} in the following we present
   our argument in terms the approximate coefficients for simplicity of
   notation.
   
   We first compute $\alpha,\beta|_{d/\xi}$ by solving
   \begin{align*}
     \begin{pmatrix}
       \alpha \\ \beta
     \end{pmatrix}|_{t=\frac{d}{\xi}}
     =
     \begin{pmatrix}
       t^{\gamma_2}\frac{\gamma_1}{c} & - t^{\gamma_2-1}\\
       -t^{\gamma_1}\frac{\gamma_2}{c} & t^{\gamma_1-1}
     \end{pmatrix}|_{t=\frac{d}{\xi}}
                                         \begin{pmatrix}
                                           u_1 \\ u_2
                                         \end{pmatrix}|_{t=\frac{d}{\xi}}.                       
   \end{align*}
  
   Plugging in the relations between $\xi^{-\gamma_2} u_1(t_2)$ and
   $\xi^{1-\gamma_2} u_2(t_2)$, we see that this contribution satisfies the
   claimed estimates.
   
   It hence remains again to study the perturbations due to the inhomogeneity.
   As in the proof of Proposition \ref{prop:leftinhom} we compute
   \begin{align*}
     \p_t
     \begin{pmatrix}
       \alpha \\ \beta
     \end{pmatrix}
= -c
     \begin{pmatrix}
       t^{\gamma_1-1} & t^{\gamma_2-1} \\
        -t^{\gamma_1}\frac{\gamma_2}{c} & t^{\gamma_2}\frac{\gamma_1}{c}
      \end{pmatrix}^{-1}
                                          f.
   \end{align*}
   Plugging in our bounds \eqref{eq:evenright} and \eqref{eq:boundaryright} by
   $C \xi^{\gamma}$ into $f$ and integrating it
   follows that
   \begin{align*}
     \left|\begin{pmatrix}
       \alpha \\ \beta
     \end{pmatrix}|_{t=\frac{d}{\xi}}^{t} \right| \ll c \xi^{\gamma}.
   \end{align*}
   Thus, the value of $(\alpha,\beta)$ at time $\frac{d}{\xi}$ is dominant and
   satisfies the estimates.
\end{proof}

\subsection{Modified Scattering and Inviscid Damping}
\label{sec:mod}
\begin{thm}[Modified Scattering]
 Let $0<c<0.2$ be given. Then there exists $C_0=C_0(c)$ such that if $\omega_0 \in
 \mathcal{G}_{2,C}$ with $C>C_0$ then $\omega(t) \in \mathcal{G}_{2,C-C_0}$ globally
 in time and $u(t)$ converges in $\mathcal{G}_{2,C-C_0}$ as $t\rightarrow
 \infty$.\\
 
 On the other hand, for every $C<C_0/2$ and every $s \in \R$, there exists $\omega_0
 \in \mathcal{G}_{2,C}$ such that $\sup_{t \in [0,\infty)}
 \|\omega(t)\|_{H^{\sigma}}=\infty$ for any $\sigma\geq s$ but such that $\omega(t)$ converges
 in $H^{s-}$ as $t\rightarrow \infty$.
 \\
 
 Furthermore, for $s\geq 0$ the corresponding velocity field converges strongly
 in $L^2$ to a shear flow as $t\rightarrow \infty$.
Linear inviscid damping holds despite the divergence of $\omega(t)$ in higher regularity.  
\end{thm}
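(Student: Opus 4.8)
The plan is to combine the per-mode echo-chain estimates of the previous sections — which are uniform in $\eta$ apart from an exponential loss $e^{C_0\sqrt{\eta}}$ — with the asymptotic-stability statement of Proposition~\ref{prop:largetime}, and then to upgrade the resulting mode-by-mode control to strong convergence by producing $t$-uniform, $(k,\eta)$-summable majorants.

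For the positive statement I would work at each fixed $\eta$ (recall \eqref{eq:tau} decouples in $\eta$) and split $[0,\infty)$ into three regimes. On $[0,\tau^*(\eta)]$ with $\tau^*(\eta)\approx\eta^{-1/2}$ only weak resonances $1/k$, $k\gtrsim\sqrt\eta$, occur, for which $\xi=\eta/k^2\lesssim1$; a direct Duhamel estimate in the spirit of Proposition~\ref{prop:resonance} (no logarithmic constraint is needed here since $\ln\xi$ does not blow up) shows that resonance $k$ contributes a factor $1+O(c\eta/k^2)$, whose product over $k\gtrsim\sqrt\eta$ is $e^{O(c\sqrt\eta)}$. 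On $[\tau^*(\eta),2]$ I iterate Theorem~\ref{thm:summary}/Corollary~\ref{cor:homogeneous_chain} over the finitely many strong resonances $k=\lfloor\sqrt\eta\rfloor,\dots,1$, whose cumulative amplification is $\prod_{k\le\sqrt\eta}(\eta/k^2)^\gamma\approx e^{C_0\sqrt\eta}$ by Stirling. On $[2,\infty)$ Proposition~\ref{prop:largetime} gives a further bounded factor and strong convergence $\tilde\omega(\tau,\cdot,\eta)\to\tilde\omega_\infty(\cdot,\eta)$. Altogether $\|\tilde\omega(t,\cdot,\eta)\|_{\ell^2_k}\lesssim e^{C_0\sqrt\eta}\|\tilde\omega_0(\cdot,\eta)\|_{\ell^2_k}$ for all $t$, so the Gevrey weight $e^{C\sqrt\eta}$ with $C>C_0$ is turned into $e^{(C-C_0)\sqrt\eta}$ and $\omega(t)\in\mathcal G_{2,C-C_0}$ uniformly in $t$; convergence in $\mathcal G_{2,C-C_0-\delta}$ then follows by splitting $|\eta|\le R$ (mode-wise convergence and dominated convergence) from $|\eta|>R$ (contribution $\lesssim e^{-2\delta\sqrt R}$ times the uniform bound), and the statement for $u$ follows since $\nabla^\perp\Delta_t^{-1}$ only smooths.

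For the negative statement I would imitate in the full model the construction behind Theorem~\ref{thm:toyscattering}. Fix $s$ and pick $\psi$ with $\hat\psi(\eta)\approx\langle\eta\rangle^{-s-1/2}(\log\langle\eta\rangle)^{-1}$, so $\psi\in H^\sigma$ for every $\sigma<s$ but $\psi\notin H^s$. For each $\eta$ let $k_0(\eta)\approx\sqrt\eta$ be the maximiser of the chain growth of Corollary~\ref{cor:homogeneous_chain} and $g(\eta)\approx e^{c_1\sqrt\eta}$ the resulting amplification of the $k=1$ mode, and set $\tilde\omega_0(k,\eta)=\varepsilon\,\delta_{k,k_0(\eta)}\,\hat\psi(\eta)/g(\eta)$. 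For $C$ below the threshold of the statement (the factor $C_0/2$ being the margin between the exponential rate lost in the positive part and the smaller rate $c_1$ realized by the chain, which itself compounds the upper/lower gap of Theorem~\ref{thm:summary}) this $\omega_0$ lies in $\mathcal G_{2,C}$ with norm $\le C\varepsilon$. Running Theorem~\ref{thm:summary} frequency by frequency gives $\tilde\omega(t,1,\eta)\to g(\eta)\tilde\omega_0(k_0(\eta),\eta)=\varepsilon\hat\psi(\eta)$ while every other mode stays $\lesssim|\hat\psi(\eta)|$ and converges, so $\omega(t)\to\omega_\infty$ with $k=1$-profile $\varepsilon\psi$; the convergence holds in each $H^\sigma$, $\sigma<s$, by the same low/high-$\eta$ split together with the decisive uniform bound $\|\tilde\omega(t,\cdot,\eta)\|_{\ell^2_k}\le\varepsilon|\hat\psi(\eta)|$, valid for all $t$ precisely because $g(\eta)$ cancels the $1/g(\eta)$ in the data. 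For $\sigma\ge s$ and any $R$ one has $\|P_{|\eta|\le R}\omega(t)\|_{H^\sigma}\to\|P_{|\eta|\le R}\omega_\infty\|_{H^\sigma}$, which diverges as $R\to\infty$ since $\psi\notin H^\sigma$, so $\sup_t\|\omega(t)\|_{H^\sigma}=\infty$.

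For the velocity, write $\tilde v(t,k,\eta)=(i(\eta-tk),\,ik)(k^2+(\eta-tk)^2)^{-1}\tilde\omega(t,k,\eta)$: the $k=0$ component is a shear whose evolution is driven only by the $k=\pm1$ modes with time-integrable coefficients and converges, while for $k\ne0$ one has $|\tilde v(t,k,\eta)|\le|k|^{-1}|\tilde\omega(t,k,\eta)|$ with $\tilde v(t,k,\eta)\to0$ pointwise in $(k,\eta)$ as $t\to\infty$; for $s>0$ the $t$-uniform majorant $|k|^{-2}\varepsilon^2|\hat\psi(\eta)|^2\in\ell^1_kL^1_\eta$ lets dominated convergence conclude $\|P_{\ne0}v(t)\|_{L^2}\to0$, so $v(t)$ converges strongly in $L^2$ to a shear flow despite $\|\omega(t)\|_{H^\sigma}$ being unbounded for $\sigma\ge s$. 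The main obstacle throughout is not a single estimate but this bookkeeping: passing from the mode-wise bounds supplied by Theorems~\ref{thm:LWP} and \ref{thm:summary} and Proposition~\ref{prop:largetime} to \emph{strong} convergence in $\mathcal G_{2,\cdot}$, $H^{s-}$ and $L^2$ simultaneously requires the $t$-uniform, $(k,\eta)$-summable majorants above, which exist only thanks to the exact compensation between the echo-chain amplification $g(\eta)$ and the $1/g(\eta)$ built into the constructed data; a secondary delicate point is the borderline case $s=0$ (where $\psi\notin L^2$), where the velocity bound must use the sharper inviscid-damping factor $|\eta-tk|(k^2+(\eta-tk)^2)^{-1}$ together with the $\langle\eta\rangle^{-\epsilon}$ decay of $\hat\psi$ rather than $|k|^{-1}$.
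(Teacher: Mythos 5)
Your proposal follows essentially the same route as the paper: exactly as in Theorem~\ref{thm:toyscattering}, you prepare frequency-localized data weighted by the reciprocal $1/g(\eta)$ of the echo-chain amplification, iterate Theorem~\ref{thm:summary} along the chain, and invoke Proposition~\ref{prop:largetime} for the asymptotic regime, so that the limiting profile $e^{ix}\psi(y)$ can be prescribed. The differences are mainly bookkeeping, and in two places you are more explicit than the paper: you actually sketch the positive (stability) half, which the paper's proof leaves implicit, via the split into weak resonances ($\xi\lesssim 1$, Duhamel as in Proposition~\ref{prop:resonance}, where the logarithmic constraint is vacuous), strong resonances (Theorem~\ref{thm:summary}) and the tail $\tau\geq 2$; and you place the single-mode data directly at $t=0$ instead of prescribing it at the first resonant time and solving backwards via Theorem~\ref{thm:LWP} as the paper does, which obliges you to check (as you do) that the pre-chain evolution leaves the mode $k_0(\eta)$ dominant.

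Two small repairs are needed. First, your concrete choice $\hat\psi(\eta)\approx\langle\eta\rangle^{-s-1/2}(\log\langle\eta\rangle)^{-1}$ actually lies in $H^{s}$, since $\int\langle\eta\rangle^{-1}(\log\langle\eta\rangle)^{-2}\,d\eta<\infty$; with it the truncation argument gives no divergence at the endpoint $\sigma=s$, and your own uniform majorant $\|\tilde\omega(t,\cdot,\eta)\|_{\ell^2_k}\lesssim\varepsilon|\hat\psi(\eta)|$ essentially shows $\sup_t\|\omega(t)\|_{H^{s}}<\infty$ for these data, contradicting the claim for $\sigma=s$. Take instead $\hat\psi(\eta)=\langle\eta\rangle^{-s-1/2}$ (or a $(\log)^{-1/2}$ correction), i.e.\ $\psi\in H^{s-}\setminus H^{s}$; then the divergence for all $\sigma\geq s$ follows exactly as you argue. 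Second, in the positive part you only obtain convergence in $\mathcal{G}_{2,C-C_0-\delta}$, whereas the statement asks for $\mathcal{G}_{2,C-C_0}$; this is recovered from your own uniform-in-time bound by dominated convergence in $\eta$, since $e^{2(C-C_0)\sqrt{|\eta|}}$ times the squared time-uniform bound is dominated by $e^{2C\sqrt{|\eta|}}\|\tilde\omega_0(\cdot,\eta)\|_{\ell^2_k}^2\in L^1_\eta$. With these adjustments your argument matches the paper's.
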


\begin{proof}
  We proceed similarly as in the case of Theorem \ref{thm:toyscattering}.
  Consider a frequency $\eta\gg 1$ and let $1\ll k \leq \sqrt{\eta}$ to be fixed
  later.
  Then by the local well-posedness established in Section \ref{sec:LWP} we may
  prescribe smooth initial data $\omega_0^{\eta,k}$ such that at the time $t=\frac{\eta}{k}$,
  $\omega$ is given by $e^{i\eta y + ik x}$.
  Then, we iteratively apply Theorem \ref{thm:summary} to
  obtain that after time $t=2\eta$, the mode $e^{i \eta y + i x}$ is the largest
  (within a factor) and of size
  \begin{align*}
    C^{k} \left( \frac{\eta^{k}}{(k!)^2} \right)^{\gamma},
  \end{align*}
  where we used that $\xi=\frac{\eta}{k^2}$ changes in each step.
  We may choose $k=k_\eta$ to maximize this product, which leads to factor
  $\exp(\tilde{C}\gamma \sqrt{\eta})=:g(\eta)$.
  Furthermore, by Theorem \ref{prop:largetime} after this time $t=2\eta$, the
  evolution is asymptotically stable and a small perturbation of the identity.

  Let now $\psi \in H^{s}(\R)$ be given and consider the initial datum:
  \begin{align*}
    \omega_0= \int_{\eta} \frac{1}{g(\eta)} \tilde{\psi}(\eta) \omega_0^{\eta,k_\eta}.
  \end{align*}
  Then by the definition of $g(\eta)$ and the properties of the evolution,
  $\omega(t)$ will asymptotically to leading order be given by
  \begin{align*}
    \omega_\infty = e^{ix}\int_{\eta} \tilde{\psi}(\eta) e^{i \eta y} d\eta= e^{ix} \psi(y).
  \end{align*}
  We can thus prescribe final data.
  We further observe that $\frac{1}{g(\eta)}\leq \exp(-C
  \sqrt{\eta})$ and thus $\omega_0 \in \mathcal{G}_{C,2}$.
  Our proof thus concludes by choosing $\psi \in H^{s}\setminus H^{\sigma}$
  appropriately. 
\end{proof}

\section{Discussion}
\label{sec:discussion}

In view of applications to the nonlinear dynamics we note that we have several competing
(de)stabilizing effects, whose interaction makes this a very challenging
problem:
\begin{itemize}
\item On the one hand the norm inflation mechanism of Section \ref{sec:echo} and
  similarly discussed in \cite{deng2018} shows that the vorticity may exhibit
  instability unless it is initially small in a sufficiently strong Gevrey
  class.
\item On the other hand resonant times are well-separated and for any given
  $\eta$ there are no resonances after time $\eta$. In particular, in the
  present problem fixing any
  finite radius $R$ as a frequency cut-off
  $\mathcal{F}^{-1}\chi_{B_R}\mathcal{F} \omega$ and its corresponding velocity
  field do converge irrespective of the regularity of the initial data.
\item Any instability will thus have to sustain a sequence of infinitely many
  separate echo chains for a sequence of times tending to infinity to ensure
  that the flow is not asymptotically stable after all (see Sections
  \ref{sec:LWP} and \ref{sec:mod}).
\item  While such a sequence of echo chains can be
  constructed in our model due to its decoupling structure (see Section
  \ref{sec:mod}), in the full nonlinear problem the conservation of enstrophy
  limits the possible relative growth. That is the conservation law imposes
  a hard ceiling for instability in that the $L^2$ energy remains bounded uniformly. Hence, it might be that the linear(!) instability mechanism of echoes is only applicable for finite times, after
  which the enstrophy limits further growth and the asymptotic stability of
  Section \ref{sec:LWP} takes over.

  Here the modified scattering results of Section \ref{sec:mod} and
  \cite{zillinger2018forced} provide a first indication that this may result in
  non-trivial but asymptotically stable behavior.
\end{itemize}
We further stress that, while stability of the linearized problem in
Sobolev \cite{Zill5}, \cite{Zill3}, \cite{Zhang2015inviscid} and Gevrey spaces \cite{jia2019linear} is
fundamental to attack the nonlinear problem, this article shows that it is further essential to
understand the linearization around non-shear low frequency perturbations, which
appear naturally in the nonlinear problem.

In the present work we have for simplicity of calculation and presentation
considered a single-mode perturbation $c\cos(x)$. We expect analogous results to also hold for more general finite sums
of small frequency perturbations, though involving quite involved calculations.
Our choice $\cos(x)$ is motivated by its simplicity and the fact that, as an
eigenfunction of the Laplacian, it is a stationary solution of the Euler
equations in Lagrangian coordinates with respect to Couette flow.

\appendix
\section{Special Functions and a Proof of Theorem \ref{thm:specialfunctions}}
\label{sec:special}

\begin{proof}[Proof of Theorem \ref{thm:specialfunctions}]
  Denoting $\xi=\frac{k^2}{\eta}$ for simplicity of notation, one may obtain the
  following explicit solution for boundary conditions $u(-1)=1, u'(-1)=0$ (e.g.
  using Mathematica)
  \tiny
  \begin{align}
    \label{eq:hypergeometric}    
\begin{split}
     \big(& -3 c^2 t \, _2F_1\left(\frac{3}{4}-\frac{1}{4} \sqrt{1-4 c^2},\frac{1}{4}
   \sqrt{1-4 c^2}+\frac{3}{4};\frac{3}{2};-\frac{1}{\xi ^2}\right) \,
   _2F_1\left(\frac{1}{4}-\frac{1}{4} \sqrt{1-4 c^2},\frac{1}{4} \sqrt{1-4
    c^2}+\frac{1}{4};\frac{3}{2};-\frac{t^2}{\xi ^2}\right) \\
   & -c^2 \,
   _2F_1\left(\frac{5}{4}-\frac{1}{4} \sqrt{1-4 c^2},\frac{1}{4} \sqrt{1-4
   c^2}+\frac{5}{4};\frac{5}{2};-\frac{1}{\xi ^2}\right) \, _2F_1\left(-\frac{1}{4}
   \sqrt{1-4 c^2}-\frac{1}{4},\frac{1}{4} \sqrt{1-4
     c^2}-\frac{1}{4};\frac{1}{2};-\frac{t^2}{\xi ^2}\right)\\
 &+3 \xi ^2 \,
   _2F_1\left(\frac{1}{4}-\frac{1}{4} \sqrt{1-4 c^2},\frac{1}{4} \sqrt{1-4
   c^2}+\frac{1}{4};\frac{3}{2};-\frac{1}{\xi ^2}\right) \, _2F_1\left(-\frac{1}{4}
   \sqrt{1-4 c^2}-\frac{1}{4},\frac{1}{4} \sqrt{1-4
     c^2}-\frac{1}{4};\frac{1}{2};-\frac{t^2}{\xi ^2}\right)\big) \\
 &/ \big(3 c^2 \,
   _2F_1\left(\frac{1}{4}-\frac{1}{4} \sqrt{1-4 c^2},\frac{1}{4} \sqrt{1-4
   c^2}+\frac{1}{4};\frac{3}{2};-\frac{1}{\xi ^2}\right) \,
   _2F_1\left(\frac{3}{4}-\frac{1}{4} \sqrt{1-4 c^2},\frac{1}{4} \sqrt{1-4
       c^2}+\frac{3}{4};\frac{3}{2};-\frac{1}{\xi ^2}\right)\\
   &-c^2 \,
   _2F_1\left(-\frac{1}{4} \sqrt{1-4 c^2}-\frac{1}{4},\frac{1}{4} \sqrt{1-4
   c^2}-\frac{1}{4};\frac{1}{2};-\frac{1}{\xi ^2}\right) \,
   _2F_1\left(\frac{5}{4}-\frac{1}{4} \sqrt{1-4 c^2},\frac{1}{4} \sqrt{1-4
       c^2}+\frac{5}{4};\frac{5}{2};-\frac{1}{\xi ^2}\right)\\
   &+3 \xi ^2 \,
   _2F_1\left(-\frac{1}{4} \sqrt{1-4 c^2}-\frac{1}{4},\frac{1}{4} \sqrt{1-4
   c^2}-\frac{1}{4};\frac{1}{2};-\frac{1}{\xi ^2}\right) \,
   _2F_1\left(\frac{1}{4}-\frac{1}{4} \sqrt{1-4 c^2},\frac{1}{4} \sqrt{1-4
   c^2}+\frac{1}{4};\frac{3}{2};-\frac{1}{\xi ^2}\right)\big)
    \end{split}
  \end{align}
  \normalsize
 Here $_2F_1$ denotes a hypergeometric function.
  We may then evaluate this formula at $t=1$ and use the series expansion of $_2
  F_1(a,b,c,x)$ at $x=\infty$:
  \begin{align}
    \label{eq:asymptotic}
    &x^{-a-b} \Bigg(x^b \left(\frac{(-1)^{-a} \Gamma (b-a) \Gamma (c)}{\Gamma (b) \Gamma
   (c-a)}+\frac{(-1)^{-a} a (a-c+1) \Gamma (b-a) \Gamma (c)}{(a-b+1) \Gamma (b)
    \Gamma (c-a) x}+O\left(\left(\frac{1}{x}\right)^2\right)\right)\\
    &+x^a
   \left(\frac{(-1)^{-b} \Gamma (a-b) \Gamma (c)}{\Gamma (a) \Gamma
   (c-b)}+\frac{(-1)^{-b} b (b-c+1) \Gamma (a-b) \Gamma (c)}{(-a+b+1) \Gamma (a)
   \Gamma (c-b) x}+O\left(\left(\frac{1}{x}\right)^2\right)\right)\Bigg).
  \end{align}
  This then for example yields that for $u(-1)=1,u'(-1)=0$,
   \tiny
  \begin{align*}
    u(1)&=  \Huge(-3 c^2 \, _2F_1\left(\frac{1}{4}-\frac{1}{4} \sqrt{1-4 c^2},\frac{1}{4} \sqrt{1-4 c^2}+\frac{1}{4};\frac{3}{2};-\frac{1}{r^2}\right) \, _2F_1\left(\frac{3}{4}-\frac{1}{4} \sqrt{1-4 c^2},\frac{1}{4} \sqrt{1-4 c^2}+\frac{3}{4};\frac{3}{2};-\frac{1}{r^2}\right) \\
      &-c^2  \, _2F_1\left(-\frac{1}{4} \sqrt{1-4 c^2}-\frac{1}{4},\frac{1}{4} \sqrt{1-4 c^2}-\frac{1}{4};\frac{1}{2};-\frac{1}{r^2}\right) \,
        _2F_1\left(\frac{5}{4}-\frac{1}{4} \sqrt{1-4 c^2},\frac{1}{4} \sqrt{1-4 c^2}+\frac{5}{4};\frac{5}{2};-\frac{1}{r^2}\right)\\
    &+3 r^2 \, _2F_1\left(-\frac{1}{4} \sqrt{1-4 c^2}-\frac{1}{4},\frac{1}{4} \sqrt{1-4 c^2}-\frac{1}{4};\frac{1}{2};-\frac{1}{r^2}\right)
          \, _2F_1\left(\frac{1}{4}-\frac{1}{4} \sqrt{1-4 c^2},\frac{1}{4} \sqrt{1-4 c^2}+\frac{1}{4};\frac{3}{2};-\frac{1}{r^2}\right)\Huge)\\
     &/ \Huge(3 c^2 \,
     _2F_1\left(\frac{1}{4}-\frac{1}{4} \sqrt{1-4 c^2},\frac{1}{4} \sqrt{1-4 c^2}+\frac{1}{4};\frac{3}{2};-\frac{1}{r^2}\right) \, _2F_1\left(\frac{3}{4}-\frac{1}{4} \sqrt{1-4 c^2},\frac{1}{4} \sqrt{1-4
       c^2}+\frac{3}{4};\frac{3}{2};-\frac{1}{r^2}\right)\\
     &-c^2 \, _2F_1\left(-\frac{1}{4} \sqrt{1-4 c^2}-\frac{1}{4},\frac{1}{4} \sqrt{1-4 c^2}-\frac{1}{4};\frac{1}{2};-\frac{1}{r^2}\right) \,
       _2F_1\left(\frac{5}{4}-\frac{1}{4} \sqrt{1-4 c^2},\frac{1}{4} \sqrt{1-4 c^2}+\frac{5}{4};\frac{5}{2};-\frac{1}{r^2}\right)\\
     &+3 r^2 \, _2F_1\left(-\frac{1}{4} \sqrt{1-4 c^2}-\frac{1}{4},\frac{1}{4}
     \sqrt{1-4 c^2}-\frac{1}{4};\frac{1}{2};-\frac{1}{r^2}\right) \, _2F_1\left(\frac{1}{4}-\frac{1}{4} \sqrt{1-4 c^2},\frac{1}{4} \sqrt{1-4 c^2}+\frac{1}{4};\frac{3}{2};-\frac{1}{r^2}\right)\Huge)
  \end{align*}
  \normalsize
can be approximated as 
\begin{align*}
  \frac{r^{-\sqrt{1-4 c^2}} r^2 \left(-\frac{3 \left(2^{\sqrt{1-4 c^2}-2} c^2 \left(\sqrt{1-4 c^2}+1\right) \Gamma \left(\frac{1}{2} \sqrt{1-4 c^2}\right)^2\right) }{\Gamma \left(\frac{1}{2} \left(\sqrt{1-4
   c^2}+3\right)\right)^2}\right)+o}{3 r^2+o},
\end{align*}
where we denoted $r=\xi^{-1}\ll 1$ and $o$ refers to terms decaying to higher
order in $r$. We note in particular that the powers $r^2$ cancel.
Approximating the value of the $\Gamma$ functions by their value in $c=0$, we thus obtain
\begin{align*}
  \xi^{\gamma} \pi c^2.
\end{align*}
Similar calculations for $u'$ and other initial data lead to the following
coefficient matrix:
\begin{align*}
  \begin{pmatrix}
    u(1)\\ u'(1)
  \end{pmatrix}\approx \xi^\gamma
  \begin{pmatrix}
    \pi c^2 & -\pi c^2 \\
    \pi c^2 & -\pi c^2
  \end{pmatrix}
        \begin{pmatrix}
    u(-1)\\ u'(-1)
  \end{pmatrix}         
\end{align*}
\end{proof}
While the above calculations and asymptotics are explicit, they are also rather opaque. 
    The splitting of the evolution into intervals $I_1,I_2,I_3$ studied in Lemmas \ref{lem:innerinterval},
    \ref{lem:schroedinger} and Proposition \ref{prop:exponentmechanism} instead
    provides a much clearer view of the underlying mechanism and yields the same
    leading asymptotics in terms of $\xi$.

\bibliographystyle{alpha}
\bibliography{citations2}
\end{document}